\numberwithin{equation}{section}
\definecolor{Arancio}{cmyk}{0,0.61,0.87,0}
\definecolor{blus}{RGB}{0,102,204}
\newcommand{\brd}[1]{\mathbb{#1}}
\newcommand{\R}{\brd{R}}
\newcommand{\C}{\brd{C}}
\newcommand{\N}{\brd{N}}
\def\O{\Omega}
\newcommand{\norm}[2]{\left\Vert {#1} \right\Vert_{#2}}
\newcommand{\eps}{\varepsilon}
\newcommand{\be}{\begin{equation}}
\newcommand{\ee}{\end{equation}}
\newcommand{\loc}{{\text{\tiny{loc}}}}
\newcommand\intn{- \hspace{-0.40cm} \int}
\newtheorem{teo}{Theorem}[section]
\newtheorem{Corollary}[teo]{Corollary}
\newtheorem{Lemma}[teo]{Lemma}
\newtheorem{Theorem}[teo]{Theorem}
\newtheorem{Proposition}[teo]{Proposition}
\theoremstyle{definition}
\newtheorem{Definition}[teo]{Definition}
\newtheorem{remark}[teo]{Remark}
\pgfplotsset{compat=1.17}
\begin{document}

\title[A priori H\"older estimates for equations degenerating on nodal sets]{A priori H\"older estimates\\ for equations degenerating on nodal sets}
\thanks{{\bf Acknowledgments.}
The authors are research fellows of Istituto Nazionale di Alta Matematica INDAM group GNAMPA. G.T. and S.V. are supported by the GNAMPA project E5324001950001 \emph{PDE ellittiche che degenerano su variet\`a di dimensione bassa e frontiere libere molto sottili}. S.T. is supported by the PRIN project 20227HX33Z \emph{Pattern formation in nonlinear phenomena}. S.V. is supported by the PRIN project 2022R537CS \emph{$NO^3$ - Nodal Optimization, NOnlinear elliptic equations, NOnlocal geometric problems, with a focus on regularity}. The authors thank G. Cora and G. Fioravanti for discussions on Proposition \ref{Sobolev}.
}
\subjclass[2020] {35B45, 35B65,  35B05, 35J70, 35B53 }
\keywords{Boundary Harnack principle, nodal domains, degenerate equations, ratios of solutions}

\author{Susanna Terracini, Giorgio Tortone and Stefano Vita}

\address{Susanna Terracini\newline\indent
Dipartimento di Matematica "Giuseppe Peano"
\newline\indent
Universit\`a degli Studi di Torino
\newline\indent
Via Carlo Alberto 10, 10124, Torino, Italy}
\email{susanna.terracini@unito.it}
\address{Giorgio Tortone\newline\indent
Dipartimento di Matematica "Giuseppe Peano"
\newline\indent
Universit\`a degli Studi di Torino
\newline\indent
Via Carlo Alberto 10, 10124, Torino, Italy}
\email{giorgio.tortone@unito.it}
\address{Stefano Vita\newline\indent
Dipartimento di Matematica "Felice Casorati"
\newline\indent
Universit\`a di Pavia
\newline\indent
Via Ferrata 5, 27100, Pavia, Italy}
\email{stefano.vita@unipv.it}

\begin{abstract}
We prove \emph{a priori} H\"older bounds for continuous solutions to degenerate equations with variable coefficients of type
\begin{equation*}
\mathrm{div}\left(u^2 A\nabla w\right)=0\quad\mathrm{in \ }\Omega\subset\R^n,\qquad \mbox{with}\qquad \mathrm{div}\left(A\nabla u\right)=0,
\end{equation*}
where $A$ is a Lipschitz continuous, uniformly elliptic matrix (possibly $u$ has non-trivial singular nodal set). Such estimates are uniform with respect to $u$ in a class of normalized solutions that have a bounded Almgren frequency. 
As a consequence, a boundary Harnack principle holds for the quotient of two solutions vanishing on a common set. 

This analysis relies on a detailed study of the associated weighted Sobolev spaces, including integrability of the weight, capacitary properties of the nodal set, and uniform Sobolev inequalities yielding local boundedness of solutions.
\end{abstract}
\maketitle

\section{Introduction}

The \emph{boundary Harnack principle on nodal domains} is a comparison principle between pairs of solutions $u,v$ to the same uniformly elliptic PDE
\begin{equation}\label{equv}
\mathrm{div}\left(A\nabla u\right)=\frac{\partial}{\partial x_i}\left(a_{ij}\frac{\partial}{\partial x_j}u\right)=0\qquad\mathrm{in \ }B_1,
\end{equation}
satisfying a geometric inclusion of their zero sets, precisely 
$$
Z(u)\subseteq Z(v),\quad\mbox{where}\quad Z(u):=u^{-1}\{0\}\cap B_1.
$$ 
This comparison can be formulated  
in terms of boundedness, H\"older continuity, or even Schauder-type regularity of their ratio $v/u$, depending on the regularity of the coefficients in \eqref{equv}. Although the classical theory of the boundary Harnack principle is well understood on given domains, even under mild regularity assumptions on their boundaries (e.g., \cite{Kem72,Dal77,Anc78,JerKen,BanBasBur91,BasBur91}), it is only in recent years that different research groups have developed a fine regularity theory in the context of nodal domains.

First, in \cite{LogMal1,LogMal2} Logunov and Malinnikova proved real-analyticity of the quotient $v/u$ of two harmonic functions across the nodal set $Z(u)$. Their analysis extends to solutions $u,v$ of \eqref{equv} involving analytic coefficient matrices. 

Under mild regularity assumptions, Lin and Lin \cite{LinLin} addressed the case of Lipschitz-continuous coefficients and established $\alpha^*$-H\"older continuity of the quotient, for some (implicitly defined) small exponent $\alpha^*\in(0,1)$, depending on the Almgren frequency of the denominator $u$. The key tools of their analysis rely on the validity of the corkscrew condition for this class of nodal domains and the existence of modified Harnack-chains, based on the Almgren monotonicity formula.

In \cite{TerTorVit1}, we propose a different approach based on the regularity theory of degenerate elliptic PDEs. As already remarked in \cite{LogMal1}, the ratio $w:=v/u$ solves an elliptic equation in divergence form whose coefficients degenerate along the nodal set $Z(u)$
\begin{equation}\label{eqw}
\mathrm{div}\left(|u|^aA\nabla w\right)=0\qquad\mathrm{in \ } B_1,
\end{equation}
with $a=2$. Note that the degeneracy of the operator is influenced by the exponent $a \in \R$, the vanishing order of $u$, and the geometry of its nodal set.\\

The main intention of this paper is to provide a comprehensive overview of the functional framework associated with the degenerate PDE in \eqref{eqw}, in the presence of non-trivial nodal sets, Lipschitz continuous coefficients and general exponents $a\in\R$. Ultimately, we establish \emph{a priori} $\alpha$-H\"older bounds for continuous solutions in the case $a = 2$, for every $\alpha \in (0,1)$. These estimates are uniform with respect to $u$, and thus with respect to its nodal set $Z(u)$, within the compact class $\mathcal{S}_{N_0}$ of solutions with bounded Almgren frequency (see \eqref{classS.intro}). 

Higher-order regularity results, such as Schauder estimates and a posteriori bounds, are the subject of our companion paper \cite{TerTorVit3}, where the problem is studied for general powers $a\in\R$ of the weight.

\subsection{Main results}
In order to state our results, some definitions are in order. Through the paper, we consider coefficients $A \in \R^{n\times n}$ that are symmetric, Lipschitz continuous and uniformly elliptic; that is,
\begin{equation}\label{UE}
\lambda|\xi|^2\leq A(x)\xi \cdot \xi \leq\Lambda|\xi|^2 \qquad \mbox{in }\, B_1, \ \mbox{for any } \, \xi \in \R^n,
\end{equation}
for some $0<\lambda\leq\Lambda<+\infty$. Then, by the standard Schauder theory, weak solutions are $C^{1,1-}_\loc(B_1)$ (i.e. $C^{1,\alpha}_\loc(B_1)$ for any $\alpha\in(0,1)$) and the nodal set $Z(u)$ splits into a regular part $R(u)$ and a singular part $S(u)$ defined as
$$
R(u)=\{x\in Z(u) \ : \ |\nabla u(x)|\neq0\},\qquad S(u)=\{x\in Z(u) \ : \ |\nabla u(x)|=0\}.
$$
$R(u)$ is locally the graph of $C^{1,1-}$ functions and $S(u)$ has Hausdorff dimension at most $(n-2)$ (see e.g. \cite{Han,HanLin2,GarLin} for further details).

Given $0<\lambda\leq\Lambda$, $L>0$, the class of admissible coefficients $\mathcal A=\mathcal A_{\lambda,\Lambda,L}$ is given by
\begin{equation}\label{coeff.ipotesi}
\begin{aligned}
\mathcal A &=\left\{
A:=(a_{ij})_{ij} \, : \, A=A^T, \, A(0)=\mathbb I,\, \mathrm{satisfies} \,\eqref{UE}\mbox{ and } [a_{ij}]_{C^{0,1}(B_1)}\leq L\right\}.
\end{aligned}
\end{equation}
Then, given $N_0>0$, the class $\mathcal S_{N_0}:=\mathcal S_{N_0,\lambda,\Lambda,L}$ of solutions $u$ of the elliptic equation \eqref{equv}, whose powers are the weight terms in \eqref{eqw}, is given by
\begin{equation}\label{classS.intro}
\mathcal S_{N_0}:=\left\{u \in H^1(B_1) \, : \, u\mbox{ solves }\eqref{equv}
 \mbox{ with } \,0 \in Z(u), A\in\mathcal A, \, \|u\|_{L^2(B_{1/2})}=1, \,N(0,u,1)\leq N_0\right\}
\end{equation}
where $N$ is the extended Almgren frequency associated with \eqref{equv} (see Section \ref{structure.nodal} for a precise definition of $N$).
The choice of this class is natural in order to achieve uniformity within it. Indeed, the validity of the Almgren monotonicity formula allows us to estimate the size of nodal and critical sets, as well as the vanishing orders of solutions, in terms of bounds on their frequency (see also \cite{LinLin,NabVal,Han,HanLin2}).\\

To start with, the uniform-in-$\mathcal{S}_{N_0}$ local boundedness of solutions to \eqref{eqw} follows from a Moser iteration technique (see Proposition \ref{prop.bound}).
However, the latter requires the validity of weighted Sobolev inequalities which are uniform-in-$\mathcal{S}_{N_0}$ (see Proposition \ref{Sobolev}). This already implies the classical boundary Harnack principle on nodal domains, i.e. the local boundedness of the ratio $w=v/u$, with uniformity within the class $\mathcal S_{N_0}$ (see also Remark \ref{r:giorgiopoi}).

Then, our main result concerns uniform-in-$\mathcal{S}_{N_0}$ \emph{a priori} H\"older bounds for the ratio $w=v/u$ of solutions to \eqref{equv} with $u\in\mathcal S_{N_0}$ and $Z(u)\subseteq Z(v)$ in any space dimension. As already mentioned, this amounts to consider equation \eqref{eqw} with $a=2$.\\
For the sake of readability, throughout the paper we will say that a constant $C>0$ \emph{depends on the class $\mathcal{S}_{N_0}$}, if it only depends on $n, N_0,\lambda,\Lambda$ and $L$.

\begin{Theorem}[\emph{A priori} uniform-in-$\mathcal S_{N_0}$ H\"older bounds for the ratio]\label{uniformHolderZ}
Let $n\geq2$, $A\in\mathcal A, u\in \mathcal{S}_{N_0}$ and $v$ be solutions to
$$
\begin{cases}
\mathrm{div}\left(A\nabla u\right) = 0 &\mbox{in }B_1,\\
\mathrm{div}\left(A\nabla v\right) = 0 &\mbox{in }B_1
\end{cases}\quad\mbox{satisfying}\quad Z(u)\subseteq Z(v).
$$
Then, if $v/u\in C^{0,\alpha}_{\loc}(B_1)$ there exists $C>0$, depending only on $\mathcal{S}_{N_0}$, and $\alpha\in(0,1)$ such that
\begin{equation*}
\left\|\frac{v}{u}\right\|_{C^{0,\alpha}(B_{1/2})}\leq 
C\|v\|_{L^2(B_1)}.
\end{equation*}
\end{Theorem}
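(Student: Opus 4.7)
Since $w=v/u$ solves $\mathrm{div}(u^2A\nabla w)=0$ in the weighted Sobolev sense on $B_1$ --- a fact justified by the integrability of $u^2$ and the small capacity of $Z(u)$ established in the first part of the paper --- the uniform Moser iteration of Proposition \ref{prop.bound}, based on the weighted Sobolev inequality of Proposition \ref{Sobolev}, already yields
\[
\|w\|_{L^\infty(B_{3/4})}\le C\|v\|_{L^2(B_1)},
\]
with $C$ depending only on $\mathcal S_{N_0}$. I would then upgrade this to H\"older by a contradiction/compactness scheme; the qualitative a priori hypothesis $w\in C^{0,\alpha}_{\loc}(B_1)$ ensures that the seminorms appearing in the rescaling are finite at every stage.

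Suppose the estimate fails. Then there exist sequences $A_k\in\mathcal A$, $u_k\in\mathcal S_{N_0}$, and $v_k$ with $\|v_k\|_{L^2(B_1)}\le 1$ and $Z(u_k)\subseteq Z(v_k)$, such that $L_k:=[w_k]_{C^{0,\alpha}(B_{1/2})}\to\infty$, where $w_k:=v_k/u_k$. Pick $x_k,y_k\in B_{1/2}$ realizing $L_k$ up to a factor, set $r_k:=|x_k-y_k|$, and observe that the $L^\infty$ estimate together with classical interior Schauder theory away from $Z(u_k)$ force $r_k\to 0$ and $\mathrm{dist}(x_k,Z(u_k))\to 0$. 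Next, rescale
\[
\tilde w_k(x):=\frac{w_k(x_k+r_kx)-w_k(x_k)}{L_kr_k^\alpha},\qquad \tilde u_k(x):=\rho_k^{-1}u_k(x_k+r_kx),
\]
with $\rho_k>0$ chosen so that $\tilde u_k$ lies in a compact subclass of $\mathcal S_{N_0}$ (after re-centering the frequency at $x_k$). The Almgren monotonicity formula, which keeps $N(x_k,u_k,r)$ monotone and bounded in terms of $N_0$, ensures that $\rho_k$ is non-degenerate. By construction, $\tilde w_k(0)=0$, $[\tilde w_k]_{C^{0,\alpha}(B_{1/(2r_k)})}\le 1$, $|\tilde w_k(e_k)|\ge 1/2$ for some unit vector $e_k$, and $\mathrm{div}(\tilde u_k^2\tilde A_k\nabla\tilde w_k)=0$ on expanding balls.

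Uniform $C^{1,\beta}_{\loc}$ bounds on $\mathcal S_{N_0}$ (Schauder plus the frequency bound) together with the freezing of coefficients $\tilde A_k\to\mathbb I$ in the blow-down allow us to extract a local-uniform limit $(\tilde u_\infty,\tilde w_\infty)$ on $\R^n$: $\tilde u_\infty$ is a global harmonic function with frequency $\le N_0$, while $\tilde w_\infty$ is globally H\"older with $[\tilde w_\infty]_{C^{0,\alpha}(\R^n)}\le 1$, $\tilde w_\infty(0)=0$, $|\tilde w_\infty(e)|\ge 1/2$, and satisfies $\mathrm{div}(\tilde u_\infty^2\nabla\tilde w_\infty)=0$ on all of $\R^n$. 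A Liouville-type rigidity for this global degenerate problem --- obtained through a weighted Caccioppoli estimate on large annuli combined with the capacitary control of $S(\tilde u_\infty)$ --- forces $\tilde w_\infty$ to be constant, contradicting the normalization.

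\emph{Main obstacle.} The hardest step is this global Liouville rigidity, needed uniformly over the blow-down profile $\tilde u_\infty$, whose singular nodal set may have Hausdorff dimension as large as $n-2$, and needed for every $\alpha\in(0,1)$. This is precisely where the fine weighted Sobolev and capacitary machinery developed earlier in the paper is deployed at full strength. A secondary delicate point is guaranteeing non-degeneracy of the rescaling factor $\rho_k$ throughout the contradiction argument, which relies on the uniform doubling implied by the Almgren frequency bound on $\mathcal S_{N_0}$.
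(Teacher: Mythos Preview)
Your overall architecture is correct and matches the paper: normalize via the uniform Moser bound, run a contradiction/blow-up at the scale $r_k=|x_k-y_k|$, pass to a limit pair $(U,W)$ with $U$ a harmonic polynomial and $W$ a non-constant globally $\alpha$-H\"older solution of $\mathrm{div}(U^2\nabla W)=0$ on $\R^n$, and conclude by a Liouville theorem. The paper also uses the cut-off trick $\eta w_k$ to keep the H\"older quotient localized and to show $r_k\to 0$, and it does not need to force $\mathrm{dist}(x_k,Z(u_k))\to 0$; the blow-up compactness (Proposition~\ref{p:blow-up}) handles all cases at once.

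The genuine gap is in your Liouville step. You propose to obtain rigidity ``through a weighted Caccioppoli estimate on large annuli combined with the capacitary control of $S(\tilde u_\infty)$''. This does not work: the blow-up limit $U$ is a harmonic polynomial of some degree $d\ge 1$, so the weight $U^2$ grows like $|x|^{2d}$ at infinity. A Caccioppoli inequality on $B_R$ gives at best
\[
\int_{B_R}U^2|\nabla W|^2\,\mathrm{d}x\;\lesssim\;\frac{1}{R^2}\int_{B_{2R}}U^2\,\mathrm{osc}_{B_{2R}}(W)^2\,\mathrm{d}x\;\lesssim\;R^{\,n+2d+2\alpha-2},
\]
which diverges for every $n\ge 2$, $d\ge 1$, $\alpha>0$; no amount of capacitary fine-tuning near $S(U)$ repairs the growth at infinity. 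The paper instead bypasses energy methods entirely at this point: it uses the inverse relation (Proposition~\ref{p:inverse}) to recognize $V:=UW$ as an entire harmonic function with $Z(U)\subseteq Z(V)$, observes that $|V|\lesssim(1+|x|)^{d+\alpha}$ forces $V$ to be a harmonic polynomial, and then invokes Murdoch's division lemma (Lemma~\ref{gen.division.lemma}): if a harmonic polynomial $U$ divides the zero set of a polynomial $V$, then $U$ divides $V$ as polynomials. Hence $W=V/U$ is a polynomial of degree $\le\lfloor\alpha\rfloor=0$, i.e.\ constant --- the desired contradiction. This algebraic input is the actual crux, and it is not recoverable from the weighted Sobolev/capacitary machinery alone.
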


We wish to highlight some relevant features of the main result above.
    \begin{enumerate}
        \item[(i)] In \cite{LogMal2}, Logunov and Malinnikova prove the validity of uniform $C^{k,\alpha}$-estimates (for any order $k$, in two dimensions) for quotients of harmonic functions sharing zero sets.
        In their paper, they exploited a compactness result and a Liouville theorem both by Nadirashvili \cite{Nad0,Nad} for a class of planar harmonic functions with a uniform control over the number of nodal domains (see \eqref{classT.intro} for the precise definition of this class).  On the other hand, in case of Lipschitz continuous coefficients and in any dimension, the $\alpha^*$-H\"{o}lder estimate obtained by Lin and Lin in \cite{LinLin} is uniform-in-$\mathcal{S}_{N_0}$.

        At this point, it is interesting to compare the two uniformity classes. We show that in two dimensions the class considered in \cite{LogMal2} coincides with $\mathcal S_{N_0}$ (in the sense of Propositions \ref{SinT} and \ref{TinS}). Ultimately, in the case of  harmonic polynomials, we provide an algebraic expression which relates the number of nodal domains with the different vanishing orders at singular points in $\mathbb{R}^2$ (see Proposition \ref{propNad}).

        \item[(ii)] Obviously, Theorem \ref{uniformHolderZ} does assume the regularity of solutions \emph{a priori}. However, any time this regularity is \emph{de facto}, i.e, effective, these results imply, as a byproduct, uniform-in-$\mathcal S_{N_0}$ \emph{a posteriori} regularity estimates as in \cite{LogMal2,LinLin}. For instance, Theorem \ref{uniformHolderZ} allows to extend to any space dimension the uniform H\"older estimates obtained by \cite{LogMal2} in case of real analytic coefficients (see Corollary \ref{corLogMal}).
        \end{enumerate}
        
In line with the program developed in \cite{SirTerVit1, SirTerVit2, TerTorVit1}, which extends the classical strategy in \cite{Sim} to the case of degenerate equations, the H\"{o}lder estimates in the theorem above are obtained through a fine blow-up analysis combined with Liouville-type theorems, that is, classification results for global solutions to
\begin{equation}\label{e:a-entire}
\mathrm{div}(u^2\nabla w)=0\qquad\mathrm{in} \ \R^n,
\end{equation}
where $u$ is a homogeneous harmonic polynomial and $w$ exhibits polynomial growth. The following result recasts a division lemma due to Murdoch \cite{Mur} as a Liouville-type theorem for pairs of solutions with shared zero sets, where such rigidity can be directly observed at the level of entire solutions.

\begin{Theorem}[Liouville theorem for the ratio of harmonic functions sharing zero sets]\label{liouvilletheorem}
Let $n\geq2$, $u$ be a harmonic polynomial and $v$ be an entire harmonic function in $\R^n$ such that $Z(u)\subseteq Z(v)$. Suppose that there exist $\gamma\geq0$ and $C>0$ such that
  \begin{equation}\label{growth.eq}
  \left|\frac{v}{u}\right|(x)\leq C(1+|x|)^{\gamma}\quad\mbox{in }\R^n.
  \end{equation}
  Then the ratio $v/u$ is a polynomial of degree at most $\lfloor \gamma\rfloor$.
\end{Theorem}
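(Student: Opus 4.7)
The plan is to leverage the growth hypothesis first to force $v$ to be a polynomial, then invoke Murdoch's algebraic division lemma to conclude that $u$ divides $v$ in the polynomial ring, and finally read off the degree of the quotient directly from the growth exponent.

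First, I would observe that since $u$ is a polynomial of degree $d:=\deg u$, the trivial bound $|u(x)|\leq C_u(1+|x|)^d$ combined with \eqref{growth.eq} yields
\[
|v(x)|\leq C'(1+|x|)^{\gamma+d}\qquad\text{in }\R^n.
\]
By the classical Liouville-type theorem for entire harmonic functions of polynomial growth (which is obtained by applying the gradient estimate iteratively on balls $B_R$ and letting $R\to\infty$), $v$ must therefore be a polynomial, of degree at most $\lfloor\gamma+d\rfloor$.

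Second, I would apply Murdoch's division lemma: if $u\in\R[x_1,\dots,x_n]$ is a harmonic polynomial and $v\in\R[x_1,\dots,x_n]$ is any polynomial with $Z(u)\subseteq Z(v)$, then $u$ divides $v$ in $\R[x_1,\dots,x_n]$. The key reason this works for \emph{harmonic} polynomials, though not for arbitrary ones (consider $u=x_1^2+x_2^2$), is that every irreducible factor of a harmonic polynomial has an $(n-1)$-dimensional real zero set; consequently $Z(u)$ is Zariski dense in its complexification and the set-theoretic containment of nodal sets upgrades to algebraic divisibility. This step produces a polynomial $p$ with $v=u\cdot p$ on $\R^n$.

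Third, having identified the ratio with the polynomial $p$, the estimate \eqref{growth.eq} reads $|p(x)|\leq C(1+|x|)^\gamma$ on $\R^n\setminus Z(u)$, which by continuity of $p$ extends to all of $\R^n$. A polynomial satisfying such a bound cannot contain monomials of degree strictly larger than $\gamma$: writing $p=\sum_{k} p_k$ with $p_k$ the homogeneous component of degree $k$, and evaluating along rays $x=t\xi$ with $|\xi|=1$, the homogeneity of $p_k$ together with the bound forces $p_k\equiv 0$ whenever $k>\gamma$. Hence $\deg p\leq \lfloor\gamma\rfloor$, as claimed.

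The main obstacle is clearly the second step: the algebraic upgrade from the set-theoretic inclusion $Z(u)\subseteq Z(v)$ to polynomial divisibility. This is not a generic phenomenon for polynomials but a special feature of the harmonic ones, and the cleanest path is to invoke Murdoch's lemma directly, as the authors indicate. A self-contained proof would either proceed by induction on $\deg u$ combined with unique factorization in $\R[x_1,\dots,x_n]$, or exploit a real-algebraic argument showing that every nontrivial irreducible factor of a harmonic polynomial has a full $(n-1)$-dimensional real zero set (so that vanishing of $v$ on $Z(u)$ implies vanishing of $v$ on the entire complex variety cut out by each factor).
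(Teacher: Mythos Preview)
Your proof is correct and follows essentially the same route as the paper: bound $|v|$ by $C(1+|x|)^{\gamma+\deg u}$ to force $v$ to be a polynomial, then invoke Murdoch's division lemma to obtain $v=u\cdot p$ with $p$ a polynomial whose degree is at most $\lfloor\gamma\rfloor$. The paper's version is terser---it reads off $\deg p\leq\lfloor\gamma\rfloor$ directly from $\deg v\leq \deg u+\lfloor\gamma\rfloor$ rather than re-applying the growth bound to $p$---but the structure is identical.
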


We remark that
Theorem \ref{liouvilletheorem} extends the Liouville theorem \cite[Corollary 4.7]{LinLin} in which the existence of a least growth at infinity is deduced by a Harnack type inequality for the ratio.

\subsection{Structure of the paper}
In Section \ref{sec:class} we highlight the main features of the uniformity class $\mathcal{S}_{N_0}$, based on the validity of the Almgren monotonicity formula, and we compare this class with the one in \cite{LogMal2}. Then in Section \ref{sec:functional}, we give the precise notion of solution to PDEs whose coefficients degenerate on nodal sets, we discuss the functional framework giving a close look to the natural weighted Sobolev spaces associated to \eqref{eqw}, with a particular attention to local integrability properties of the weight terms and capacitary features of their nodal sets.

Moreover, for solutions to \eqref{eqw}, we establish local $L^\infty$ bounds that are uniform-in-$\mathcal S_{N_0}$, based on Sobolev inequalities and Moser iterations.

Section \ref{sec:holder} is devoted to the proof of the main Theorem \ref{uniformHolderZ}; that is, the \emph{a priori} uniform-in-$\mathcal S_{N_0}$ H\"older estimates for the ratio of two solutions sharing zero sets.

Finally, in Section \ref{sec:liouville}, we prove the Liouville Theorem \ref{liouvilletheorem}. While studying the behavior of a planar harmonic polynomial at infinity, we show the validity of an algebraic formula which relates the number of its nodal domains with the number of its singular points counted with multiplicity (vanishing orders). Then Theorem \ref{liouvilletheorem} relies on the validity of a division lemma between polynomials by Murdoch.

\section{The compact class of solutions \texorpdfstring{$\mathcal{S}_{N_0}$}{Lg}}\label{sec:class}
In this section we introduce several known results related to solutions of elliptic PDEs with Lipschitz variable coefficients. Starting from the validity of an Almgren monotonicity formula, we show the known properties of the class of solutions $\mathcal{S}_{N_0}$.

\subsection{Almgren monotonicity formula and structure of the nodal set}\label{structure.nodal}
Let $n\geq 2$ and $u \in H^1(B_1)$ be a weak solution to \eqref{equv}
where $A \in \mathcal{A}$ is a symmetric uniformly elliptic matrix with Lipschitz continuous coefficients (see \eqref{coeff.ipotesi}). In such case, by elliptic regularity any weak solution is of class $C^{1,1-}_\loc(B_1) \cap H^2_\loc(B_1)$.
 Thus, by \cite{Han} (see also \cite{HanLin2,GarLin}) the nodal set $Z(u)=u^{-1}\{0\}$ of $u$ splits into a regular part $R(u)$, which is locally a $(n-1)$-dimensional hyper-surface of class $C^{1,1-}$, and the singular part $S(u)$ which has Hausdorff dimension at most $(n-2)$.

We introduce now the notion of vanishing order.

\begin{Definition}[Vanishing order]
Given $u \in H^1(B_1)$, we can define the \emph{vanishing order} of $u$ at $x_0 \in B_1$ as
$$
\mathcal{V}(x_0,u) = \sup\left\{k\geq 0: \ \limsup_{r \to 0^+} \frac1{r^{n-1+2k}} \int_{\partial B_r(x_0)} u^2 \,\mathrm{d}\sigma <+\infty\right\}.
$$
\end{Definition}

The number $\mathcal{V}(x_0,u) \in [0,+\infty]$ is characterized by the property that
$$
\limsup_{r \to 0^+} \frac1{r^{n-1+2k}} \int_{\partial B_r(x_0)} u^2  \,\mathrm{d}\sigma= \begin{cases} 0 & \text{if $0 <k< \mathcal{V}(x_0,u)$} \\
+\infty  & \text{if $k > \mathcal{V}(x_0,u)$}.
\end{cases}
$$
The Lipschitz continuity of the coefficients of $A$ allows to prove the strong unique continuation principle, see \cite{GarLin}, which consists in the fact that non-trivial solutions can not vanish with infinite order at $Z(u)$. Ultimately, it implies that the only solution vanishing in an open subset of its reference domain is the trivial one, which is the classical unique continuation principle.

The Lipschitz continuity of the leading coefficients allows to derive the existence of a monotonicity formula, which is a key tool to the local analysis of solutions near their nodal set. For the sake of completeness, we recall the construction of the generalized Almgren frequency function, by following the general ideas in \cite{GarLin,HanLin2,Han} (see also the recent developments in \cite{CheNabVal,NabVal}). The key idea is to consider a specific change of coordinates in order to rewrite the problem in terms of a Laplace-Beltrami operator associated to a new given metric $g$. We suggest to the interested reader to address \cite[Section 3]{CheNabVal} for more details on this construction.

Thus, by following \cite{CheNabVal}, fixed $x_0\in B_1$ we consider the function
$$
d^2(x_0,x)=a^{ij}(x_0)(x-x_0)_i(x-x_0)_j,
$$
where $x=x_i e_i$ is the canonical decomposition in $\R^n$ and $a^{ij}$ are the entries of $A^{-1}$. Then, consider the new metric $g(x)=g(x_0,x)$ centered at $x_0$ defined by
\begin{equation}\label{e:g}
g_{ij}(x_0,x)=\eta(x_0,x) a^{ij}(x)\quad\mbox{with}\quad
\eta(x_0,x)=\frac{a_{kl}(z)a^{ks}(x_0)a^{lt}(x_0)}{d^2(x_0,x)}(x-x_0)_s(x-x_0)_t.
\end{equation}
Notice that the geodesic distance in the metric $g(x)$ centered at $x_0$ corresponds to $d(x_0,x)$, for every choice of $x_0,x \in B_1$. Thus, we can defined with $E_\rho(x_0)$ the geodesic ball centered at $x_0 \in B_1$, with radius $r>0$, induced by $g_{x_0}$ as 
$$
E_r(x_0) := x_0 + r A^{-1/2}(x_0)(B_1).
$$

Before giving the definition of the Almgren-type frequency function, we can rewrite equation \eqref{equv} in terms of the Laplace-Beltrami operator associated to the metric $g$. Indeed, by a direct computation, $u$ is a solution to \eqref{equv} if and only if it solves
\begin{equation}\label{e:NabVal}
\Delta_g u + \left(\nabla_g \log(|g|^{-1/2}\eta)\cdot \nabla_g u\right)_g =0,
\end{equation}
where $|g|$ is the determinant of the metric $g$ and
$$
\Delta_g u =\frac{1}{\sqrt{|g|}}\partial_i\left( \sqrt{|g|} g^{ij}\partial_j u\right)\quad\mbox{and}\quad \nabla_g u = g^{ij}\partial_j u e_i.
$$
It is more convenient to rewrite \eqref{e:NabVal} in terms of a weighted divergence PDEs, that is
$$
\mathrm{div}_g(\omega \nabla_g u) =0,\qquad\mbox{with }\quad\omega= |g|^{-1/2}\eta(x_0,x).
$$
For every $x_0 \in B_1, r \in (0,\Lambda^{-1/2}(1-|x_0|))$, set
$$
  D(x_0,u,g,r) := \int_{E_r(x_0)} \omega |\nabla_g u|^2_{g}  \,\mathrm{d} V_{g},\qquad
H(x_0,u,g,r) := \int_{\partial E_r(x_0)} \omega u^2\,\mathrm{d}\sigma_g,
$$
\begin{remark}
In \cite{GarLin} the authors considered a rather similar metric $\overline{g}=\overline{g}(x_0,x)$ defined as $\overline{g}_{ij}(x_0,x) = g_{ij}(x_0,x) (\mathrm{det}\, A(x))^{\frac{1}{n-2}}$, with $g$ as in \eqref{e:g}. With respect to $\overline{g}$, it can be proved that $u$ is a solution to \eqref{equv} if and only if it solves
$$
\mathrm{div}_{\overline{g}}(\mu \nabla_{\overline{g}} u) =\frac{1}{\sqrt{|\overline{g}|}}\partial_i\left(\mu \sqrt{|\overline{g}|}\overline{g}^{ij}\partial_j u\right) =0,\quad\mbox{with }\mu = \eta^{\frac{2-n}{2}},
$$
and $\eta$ defined in \eqref{e:g}. We stress that the two metrics are equivalent and they allow to construct frequency functions that coincide among matrices $A\in \mathcal{A}$ with constant coefficients.
\end{remark}
Throughout the paper, if no ambiguity occurs, we will often omit the dependence on the metric \eqref{e:g} in the frequency function. By adapting the results of \cite{GarLin, CheNabVal,NabVal, HanLin2} to our notations, we can state the following monotonicity result.
\begin{Proposition}[\cite{GarLin,CheNabVal,HanLin2}]\label{prop.monoton}
Let $u$ be a solution to \eqref{equv} in $B_1$ and $x_0 \in B_1, r\in (0,\Lambda^{-1/2}(1-|x_0|))$. Then there exists $C>0$ depending only on $n,\lambda,\Lambda$ and $L$, such that the map
\be\label{e:Almgren}
N(x_0,u,g,\cdot) \colon r \mapsto r e^{C r}\frac{D(x_0,u,g,r)}{H(x_0,u,g,r)}
\ee
is monotone non-decreasing for  $r< \Lambda^{-1/2}(1-|x_0|)$. Moreover, for $0<r_1<r_2<\Lambda^{-1/2}(1-|x_0|)$ it holds
that
$$
\left|\frac{H(x_0,u,g,r_1)}{H(x_0,u,g,r_2)}\exp\left(-2\int_{r_1}^{r_2} \frac{N(x_0,u,g,\tau)}{t}\,\mathrm{d}\tau\right)-1\right|\leq C r_2.
$$
\end{Proposition}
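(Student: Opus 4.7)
The plan is to carry out the classical Almgren monotonicity computation on the Riemannian manifold $(B_1, g)$ endowed with the weight $\omega=|g|^{-1/2}\eta$. The key advantage of the change of metric to $g=g(x_0,\cdot)$ defined in \eqref{e:g} is that (i) $u$ solves a symmetric divergence-form PDE $\mathrm{div}_g(\omega \nabla_g u)=0$; (ii) the geodesic balls centered at $x_0$ are precisely the sets $E_r(x_0)$; and (iii) $d(x_0,\cdot)$ is a genuine distance function for $g$, satisfying the eikonal identity $|\nabla_g d|_g=1$, so that the radial derivative on $\partial E_r(x_0)$ coincides with the outward normal derivative $\partial_{\nu_g}$. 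Without loss of generality I fix $x_0=0$.

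The proof then splits into two derivative computations. First, by differentiating under the integral with the coarea formula in the metric $g$ and by integrating the identity $\mathrm{div}_g(\omega u\nabla_g u)=\omega|\nabla_g u|_g^2$ on $E_r$, one obtains
$$
H'(r)=\frac{n-1}{r}H(r)+2D(r)+E_1(r)H(r),\qquad |E_1(r)|\leq C,
$$
where $E_1$ collects the error arising from the first-order expansion of $\omega$ and the Jacobian of geodesic spheres near $x_0$; the Lipschitz regularity of $a_{ij}$ guarantees $|E_1(r)|\leq C(n,\lambda,\Lambda,L)$. Second, applying a Pohozaev/Rellich-type identity to the radial vector field $X=\nabla_g(d^2/2)$, together with the PDE, yields
$$
D'(r)=\frac{n-2}{r}D(r)+2\int_{\partial E_r}\omega(\partial_{\nu_g}u)^2\,\mathrm{d}\sigma_g+E_2(r)D(r),\qquad |E_2(r)|\leq C,
$$
the control on $E_2$ again coming from the Lipschitz bound on $A$ through an estimate of the form $\mathrm{div}_g X=n+O(r)$.

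Combining the two expressions and taking the logarithmic derivative of $N(r)=re^{Cr}D(r)/H(r)$, the $(n-1)/r$ and $(n-2)/r$ terms cancel against $1/r$, leaving
$$
\frac{N'(r)}{N(r)}=C-E_1(r)-E_2(r)+\frac{2\int_{\partial E_r}\omega(\partial_{\nu_g}u)^2\,\mathrm{d}\sigma_g}{D(r)}-\frac{2D(r)}{H(r)}.
$$
The difference of the last two terms is non-negative by Cauchy-Schwarz on $\partial E_r$ with weight $\omega$, once one uses the identity $D(r)=\int_{\partial E_r}\omega u\,\partial_{\nu_g}u\,\mathrm{d}\sigma_g$. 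Choosing the constant $C$ in \eqref{e:Almgren} larger than a uniform bound for $|E_1|+|E_2|$ makes the right-hand side non-negative, yielding the monotonicity of $N$. The quantitative comparison of $H(r_1)$ and $H(r_2)$ then comes from integrating $\frac{d}{dr}\log H=\frac{n-1}{r}+\frac{2D}{H}+E_1$ between $r_1$ and $r_2$, writing $2D/H=2N(r)e^{-Cr}/r$, and using the elementary estimates $|e^{-C\tau}-1|\leq C\tau$ and $|\int_{r_1}^{r_2}E_1|\leq Cr_2$.

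The main technical obstacle is the careful bookkeeping of the error terms $E_1$ and $E_2$: one must verify that every correction arising from the fact that $g$, $\omega$, and the geodesic spheres differ from their Euclidean counterparts is of order $O(1)$ uniformly in $r$ and depends only on $n,\lambda,\Lambda,L$. This verification relies on the specific form of $\eta$ in \eqref{e:g}, chosen precisely so that the eikonal identity $|\nabla_g d|_g=1$ holds pointwise, which removes what would otherwise be a first-order obstruction to the monotonicity in the variable-coefficient setting.
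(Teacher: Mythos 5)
The paper states this proposition without proof, citing \cite{GarLin,CheNabVal,HanLin2}, so there is no internal argument to compare against; your outline follows the standard Almgren computation from those references. The monotonicity part is sound: the coarea and Rellich--Pohozaev derivative identities, the Cauchy--Schwarz step using $D(r)=\int_{\partial E_r}\omega\, u\,\partial_{\nu_g}u\,\mathrm{d}\sigma_g$, and the absorption of the $O(1)$ error terms into the factor $e^{Cr}$ are exactly the right moves. (One small sign slip: from your two derivative formulas one obtains $N'/N = C + E_2 - E_1 + \cdots$, not $C - E_1 - E_2 + \cdots$, but since $E_1,E_2$ are error terms bounded by $C$ this is immaterial.)

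Your final paragraph, however, is too compressed to actually deliver the displayed $H$-comparison. Integrating $(\log H)'=\tfrac{n-1}{r}+\tfrac{2D}{H}+E_1$ from $r_1$ to $r_2$ leaves a factor $(r_2/r_1)^{n-1}$ which is not $1+O(r_2)$ and which you do not address. Moreover, replacing $2D/H=2N(\tau)e^{-C\tau}/\tau$ by $2N(\tau)/\tau$ costs $2\int_{r_1}^{r_2}N(\tau)(1-e^{-C\tau})/\tau\,\mathrm{d}\tau$, which your estimate $|e^{-C\tau}-1|\leq C\tau$ only bounds by $2CN(r_2)(r_2-r_1)$, a quantity depending on the frequency and not merely on $n,\lambda,\Lambda,L$. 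In fact the Euclidean model already rules out the inequality as literally printed: with $A=\mathbb I$ and $u$ a $k$-homogeneous harmonic polynomial one has $N\equiv k$ and $H(r)=c\,r^{n-1+2k}$, so $\tfrac{H(r_1)}{H(r_2)}\exp\left(-2\int_{r_1}^{r_2}k/\tau\,\mathrm{d}\tau\right)=(r_1/r_2)^{n-1+4k}$, nowhere near $1$. This signals a typo in the paper's statement (presumably $H$ should be the spherical average $r^{1-n}H(r)$ and the integrand should be $D(\tau)/H(\tau)$ rather than $N(\tau)/\tau$, as in \cite{GarLin}); your last step would need to identify and prove the corrected statement, since as written it cannot close.
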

Then, we define as generalized Almgren frequency, the formula in \eqref{e:Almgren}. Notice that, if $A \in \mathcal{A}$ has constant coefficients, then we can simply take $C=0$ in \eqref{e:Almgren}.\\
First, by the monotonicity, for every $x_0 \in B_1$ we deduce the existence of the limit
$$
N(x_0,u,g,0^+)=\lim_{r\to 0^+}N(x_0,u,g,r),
$$
which coincides with the vanishing order of $u$ at $x_0$, that is
$
\mathcal{V}(x_0,u)= N(x_0,u,g,0^+).
$
Secondly, Proposition \ref{prop.monoton} implies the following doubling-type estimate: by \cite[Theorem 1.3]{GarLin} (see also \cite[Section 3.1]{HanLin2}) there exists $C=C(n,\lambda,\Lambda,L)$ such that, for every $x_0 \in B_{1/2}$, we have
\begin{equation}\label{tipo.doubling}
\int_{B_{R}(x_0)}u^2 \,\mathrm{d}x\leq C\left(\frac{R}{r}\right)^{n + 2N(x_0,u,g,R)}\int_{B_{r}(x_0)}u^2 \,\mathrm{d}x,
\end{equation}
for every $0<r\leq R\leq \Lambda^{-1/2}(1-|x_0|)$.\\
Despite the presence of the metric $g$ in \eqref{e:Almgren}, which deeply depends on the choice of the center $x_0 \in B_1$, the generalized frequency formula enjoys invariance properties similar to those which hold for the Almgren formula for harmonic functions along blow-up sequences.
We introduce here a quantity that will be used through the paper several times: in light of the monotonicity formula, there exists $\overline{C}>0$ depending only on $n,\lambda, \Lambda$ and $L$ such that
$$
N(x_0,u,r)\leq \overline{C} N(0,u,1)\leq \overline{C} N_0, \quad\mbox{for every }x_0 \in B_{7/8}, r\leq 1/16.
$$
Thus, we define
\begin{equation}\label{overlineN0}
    \overline N_0:=\sup_{u\in \mathcal{S}_{N_0}}\max_{x_0\in \overline{B_{7/8}}, r\leq 1/16} N(x_0,u,r) \leq  \overline{C}N_0.
\end{equation}
In addition, through this definition, it is possible to bound the vanishing orders of $u$ on $Z(u)\cap B_{7/8}$ uniformly-in-$\mathcal{S}_{N_0}$.
\begin{Definition}[Blow-up sequences]\label{d:blowup}
Let $(A_k)_k \subset \mathcal{A}$ and $(u_k)_k \subset H^1(B_1)$ be a family of solution to $L_{A_k}u_k=0$ in $B_1$. Then, fixed two sequences $(x_k)_k \subset B_{3/4}$ and $r_k\searrow 0^+$ with $0<r_k< \Lambda^{-1/2}/4$, we define $g_k$ the metric introduced in \eqref{e:g} centered at $x_k$ and depending on $A_k$. Finally, we define as blow-up sequence centered at $x_k$ the family
\begin{equation}\label{e:blow-up}
U_k(x)= \frac{u_k\left(x_k+r_k x\right)}{H(x_k,u_k,g_k,r_k)^{1/2}},\quad\mbox{for }x \in B_{1/r_k},
\end{equation}
satisfying $L_{\tilde{A}_k}U_k=0$ in $B_{1/r_k}$, with $\tilde{A}_k(0)=\mathbb I$ and such
$$
\tilde{A}_k(x) := A^{-1/2}(x_k) A_k (x_k+ r_k A^{-1/2}(x_k)x) A^{-1/2}(x_k)\in \mathcal{A}.
$$
Ultimately, by exploiting the definition of \eqref{e:g} and \eqref{e:Almgren}, it holds
\begin{equation}\label{e:Almgren.change}
N(x_k,u_k,g_k,r_k r) = N(0,U_k,\tilde{g}_k,r),\quad\mbox{for }0<r<\frac{\Lambda^{-1/2}}{4 r_k},
\end{equation}
with $\tilde{g}_k$ the metric centered at the origin associated to the matrix $\tilde{A}_k$.
\end{Definition}
By exploiting the compactness in $\mathcal{A}$ and the one of solutions to \eqref{equv} with bounded $L^2$-norm, it is possible to prove existence of blow-up limits. Indeed, for blow-up sequences with fixed center $x_k=x_0 \in B_1$, it is possible to deduce existence and uniqueness of the blow-up limit by following the ideas in \cite[Theorem 1.5]{Han} (see also \cite[Section 3]{Han}). Generally, it is more convenient to prove compactness of the blow-up sequence among functions with uniformly bounded frequency (see also \cite[Section 2.2]{NabVal} and \cite[Section 3]{CheNabVal} for a interesting discussion on the notion of blow-ups for more general elliptic equations).\\

Thus, in view of the definition of $\mathcal{A} =\mathcal A_{\lambda,\Lambda,L}$ in \eqref{coeff.ipotesi}, given $N_0>0$ we define the class of solutions $\mathcal S_{N_0}=\mathcal S_{\lambda,\Lambda,L,N_0}$ as
$$
\mathcal S_{N_0}=\left\{u \in H^1(B_1) \, : \, u\mbox{ solves }\eqref{equv}
 \mbox{ with }  A\in\mathcal A, \,0 \in Z(u),\, N(0,u,g,1)\leq N_0, \, \|u\|_{L^2(B_{1/2})}=1\right\}.
$$
Heuristically, in view of the monotonicity result and the doubling condition, a bound on $N(0,u,g,1)$ also gives a uniform control on $\mathcal{V}(x,u)$ and $N(x,u,g_r)$, for well-chosen $x\in B_1, r>0$ (see for instance \cite[Theorem 3.2.10]{HanLin2} and \cite[Lemma 3.9]{CheNabVal}).

Although the compactness of the class $\mathcal S_{N_0}$ in the $C^{1,\alpha}\cap H^1$ topology is well-known (see for instance \cite{Han,HanLin2}), in the following result we want to highlight the general compactness result for blow-up sequences in $\mathcal S_{N_0}$ with moving centers, which will be crucial for the contradiction arguments in the following sections. Indeed, the uniform bound $N_0$ ultimately implies that blow-up limits are harmonic polynomials.

\begin{Proposition}[Compactness of blow-up sequences]\label{p:blow-up}
Let $(u_k) \subset \mathcal S_{N_0}$ be a family of solutions associated to $(A_k)_k \subset \mathcal{A}$. Le us consider the blow-up sequence $U_k$ centered at $(x_k)_k \subset B_{3/4}$ with $r_k\searrow 0^+$ given in Definition \ref{d:blowup}. Then, there exists a harmonic polynomial $P$, such that, up to a subsequence, $U_k \to P$ in $C^{1,\alpha}_\loc(\R^n)$, for every $\alpha \in (0,1)$ and strongly in $H^1_\loc(\R^n)$. Moreover, $Z(U_k)\to Z(P)$ with respect to the Hausdorff distance.
\end{Proposition}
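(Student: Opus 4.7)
The plan is a standard blow-up compactness argument driven by the monotonicity of the generalized frequency $N$, carried out in three steps.

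\emph{Step 1: convergence of the rescaled data and uniform frequency bound.} By construction in Definition \ref{d:blowup}, $\tilde A_k(0)=\mathbb I$, and the Lipschitz seminorm of $\tilde A_k$ on any fixed $B_R$ scales like $r_k L\lambda^{-1}\to 0$; hence $\tilde A_k\to\mathbb I$ locally uniformly on $\R^n$, and the rescaled metrics $\tilde g_k$ and weights $\tilde\omega_k$ built from $\tilde A_k$ via \eqref{e:g} converge locally uniformly to the Euclidean metric and to $1$, respectively. Combining the invariance \eqref{e:Almgren.change} with Proposition \ref{prop.monoton} and the definition \eqref{overlineN0} of $\overline N_0$, one obtains
$$
N(0,U_k,\tilde g_k,r)=N(x_k,u_k,g_k,r_k r)\leq \overline N_0\qquad\text{whenever } r\leq 1/(16\, r_k),
$$
so the frequency bound is available on every fixed ball of $\R^n$ eventually in $k$.

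\emph{Step 2: compactness and identification of the limit.} The blow-up normalization \eqref{e:blow-up} gives $H(0,U_k,\tilde g_k,1)=1$; together with the frequency bound above and the doubling estimate \eqref{tipo.doubling} rescaled through \eqref{e:blow-up}, this yields $\|U_k\|_{L^2(B_R)}\leq C(R,\overline N_0)$ for every fixed $R>0$ and $k$ large. Since $U_k$ solves $L_{\tilde A_k}U_k=0$ with uniformly elliptic coefficients whose Lipschitz seminorm vanishes as $k\to\infty$, standard Schauder theory provides uniform $C^{1,\alpha}_\loc$ bounds for every $\alpha\in(0,1)$; a diagonal extraction then produces a limit $U_\infty$ with $U_k\to U_\infty$ in $C^{1,\alpha}_\loc(\R^n)$, and the $C^1$ convergence combined with the local $L^2$ bound upgrades this to strong convergence in $H^1_\loc(\R^n)$. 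Passing to the limit in the equation (using $\tilde A_k\to\mathbb I$) gives $\Delta U_\infty=0$ in $\R^n$, while passing to the limit in the frequency (legitimate thanks to the strong $H^1_\loc$ convergence and to $\tilde g_k,\tilde\omega_k$ converging to their Euclidean counterparts) yields $N(0,U_\infty,r)\leq \overline N_0$ for every $r>0$; the normalization passes to the limit as well, forcing $U_\infty\not\equiv 0$. Since a harmonic function on $\R^n$ with globally bounded Almgren frequency has polynomial growth of order at most $\overline N_0$, the classical Liouville theorem for harmonic functions with polynomial growth guarantees that $U_\infty=P$ is a non-trivial harmonic polynomial of degree at most $\lfloor\overline N_0\rfloor$.

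\emph{Step 3: Hausdorff convergence of the nodal sets.} On any compact set $K\subset \R^n$, the inclusion $\limsup_k(Z(U_k)\cap K)\subseteq Z(P)\cap K$ is immediate from the uniform convergence $U_k\to P$. Conversely, if $x_0\in Z(P)\cap K$ and $r>0$ is small, then since $P\not\equiv 0$ is harmonic, the mean value property together with unique continuation forces $P$ to take values of both signs on $\partial B_r(x_0)$; the uniform convergence $U_k\to P$ propagates this to $U_k$ for $k$ large, and the connectedness of $\partial B_r(x_0)$ (we use $n\geq 2$) produces a zero of $U_k$ in $B_r(x_0)$, yielding $Z(P)\cap K\subseteq \liminf_k(Z(U_k)\cap K)$. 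The technical point requiring the most care is the passage to the limit in the generalized frequency \eqref{e:Almgren}, since the latter involves the $k$-dependent metrics $\tilde g_k$ and weights $\tilde\omega_k$; this is handled by combining the locally uniform convergence of $\tilde g_k$ and $\tilde\omega_k$ to their Euclidean counterparts with the strong $H^1_\loc$ convergence of $U_k$ established in Step 2.
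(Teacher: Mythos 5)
Your proof follows the paper's own blow-up strategy closely in Steps 1 and 2: uniform $C^{1,\alpha}_\loc\cap H^1_\loc$ compactness of $U_k$ from the doubling estimate and Schauder theory, passage to the limit in the rescaled equation and in the generalized frequency (using $\tilde g_k\to\delta_{ij}$ and the strong $H^1_\loc$ convergence), and then Proposition \ref{lem.natural} (or, equivalently, the Liouville theorem for harmonic functions of polynomial growth) to identify the limit as a non-trivial harmonic polynomial. The identification of the limit and the bound on its degree are precisely the paper's argument around \eqref{boundsullaN}.

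The one place where you take a different route is Step 3. For the liminf inclusion, the paper argues via the strong maximum principle (if $U_k$ had no zero in $B_\delta(x_0)$ it would keep a fixed sign, so $P\geq 0$ in $B_\delta(x_0)$ with $P(x_0)=0$, forcing $P\equiv 0$), and then obtains the \emph{uniform} distance estimate by a contradiction argument exploiting the homogeneity of $Z(P)$ through the origin. You use the mean-value property of the harmonic limit $P$ together with unique continuation to force $P$ to change sign on $\partial B_r(x_0)$, then connectedness of the sphere, which is conceptually equivalent and perhaps slightly cleaner. However, as written your argument only yields, for each fixed $x_0\in Z(P)\cap K$, a threshold $\bar k(x_0,r)$ beyond which $U_k$ vanishes in $\overline{B_r(x_0)}$; this is the Kuratowski liminf inclusion $Z(P)\cap K\subseteq\liminf_k Z(U_k)$ that you record, but it is a priori weaker than the uniform estimate
$\sup_{x_0\in Z(P)\cap K}\mathrm{dist}\bigl(x_0,Z(U_k)\bigr)\to 0$
that Hausdorff convergence requires. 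The gap is harmless but should be closed: either note that $x_0\mapsto\min\bigl\{\max_{\partial B_r(x_0)}P,\,-\min_{\partial B_r(x_0)}P\bigr\}$ is positive and continuous on the compact set $Z(P)\cap K$, hence bounded below by some $\delta_r>0$, so a single $\bar k$ serves all $x_0$; or reproduce the paper's compactness/contradiction argument. Either way the proof is essentially the paper's.
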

\begin{proof}
The proof of the result follows by known arguments of the blow-up analysis carried out in \cite{Han,HanLin2,CheNabVal,NabVal}. Indeed, since the blow-up sequence
$$
\tilde{A}_k(x):=A^{-1/2}(x_k) A_k (x_k+ r_k A^{-1/2}(x_k)x) A^{-1/2}(x_k) \to \mathbb I,
$$
uniformly on every compact set of $\R^n$, by standard arguments we deduce that the blow-up limit is harmonic in $\R^n$.
Indeed, by exploiting the doubling condition \eqref{tipo.doubling}, it is possible to prove a uniform bound in $H^1_\loc(\R^n)$ and to deduce then convergence in $C^{1,\alpha}_\loc(\R^n)\cap H^1_\loc(\R^n)$. Moreover, the Hausdorff convergence of the nodal sets of $U_k$ follows by the maximum principle and the unique continuation principle for harmonic polynomials and the uniform convergence of the sequence $(U_k)$ on every compact set of $\R^n$.\\
Then, by \cite[Theorem 3.2.10]{HanLin2}, since $\tilde{g}_k \to \delta_{ij}$ locally and uniformly in $\R^n$, by \eqref{e:Almgren.change} we have
\begin{equation}\label{boundsullaN}
N(0,P,\delta_{ij},t) = \lim_{k\to +\infty}N(x_k,U_k,\tilde{g}_k,r_k t) \leq C(n,\lambda,\Lambda,L)N_0 \quad\mbox{for every }t>0,
\end{equation}
where $N(0,P,\delta_{ij},t)$ is the classical frequency for harmonic functions in $\R^n$. Finally, considering the limit $t\to+\infty$ in \eqref{boundsullaN}, in view of Proposition \ref{lem.natural} we can conclude that $P$ is a polynomial of degree $N_\infty$ smaller or equal than $ C(n,\lambda,\Lambda,L) N_0$.
On one side, by the $C^{1,\alpha}_\loc(\R^n)$-convergence, we immediately deduce that
for every $\varepsilon>0$ there exists $\bar k>0$ such that
$$
Z(U_k) \cap B_1 \subseteq N_\varepsilon (Z(P)\cap B_1), \quad\mbox{for }k\geq \bar k,
$$
where $N_\varepsilon(\cdot)$ is the closed $\varepsilon$-neighborhood of a set. On the other hand, let us show that for every $\varepsilon>0$ there exists $\bar k>0$ such that
$$
Z(P) \cap B_1 \subseteq N_\varepsilon (Z(U_k)\cap B_1), \quad\mbox{for }k\geq \bar k.
$$
First, given $x_0\in Z(P), \delta>0$ there exists $k>0$ large such that $Z(U_k)\cap B_\delta(x_0)\neq \emptyset$. If not, up to a change of sign, we would have $U_k>0$ in $B_\delta(x_0)$ for every $k>0$ sufficiently large. Then, by passing to the limit, we deduce that $P\geq 0$ in $B_\delta(x_0)$ and $P(x_0)=0$, which implies by the maximum principle that $P\equiv 0$ in $B_\delta(x_0)$, in contradiction with the unique continuation principle.

Now, suppose by contradiction the existence of $\bar\varepsilon>0$ and $x_k \in Z(P)\cap B_1, x_k\to x \in Z(P)\cap \overline{B_1}$ such that $\mathrm{dist}(x_k,Z(U_k)\cap B_1)\geq \bar\varepsilon$. Since $Z(P)$ is homogeneous and passes through the origin, there exists $\bar x \in Z(P)\cap B_1$ such that $|x-\bar x|\leq \bar \varepsilon/4$. Moreover, by making use of the result proved in the previous paragraph, we can take a sequence $(\bar{x}_k)_k \subset Z(U_k)\cap B_1$ such that $|\bar{x}_k-\bar x|\leq \bar \varepsilon/4$, for large $k>0$. Then
$$
\mathrm{dist}(x_k, Z(U_k)\cap B_1)\leq
|x_k-\bar x_k|\leq |x_k-x|+|x-\bar{x}|+|\bar{x}_k-\bar{x}|< \bar\varepsilon
$$
for large $k$, which is a contradiction. Finally, the result can be extended on every compact set of $\R^n$ by scaling.
\end{proof}

As we have already pointed out, the bound of the frequency in the class $\mathcal S_{N_0}$ implies a uniform bound on the vanishing order at well-chosen points. In the following Lemma we rephrase such principle in terms of a uniform non-degeneracy type result.
\begin{Lemma}\label{lemma.hay}
Let $u \in \mathcal{S}_{N_0}$ and $\overline{N}_0$ be defined as in \eqref{overlineN0}, then for every $x_0 \in B_{3/4}$ and $0<r\leq 1/8$, we have
$$
\int_{B_{r}(x_0)}u^2 \,\mathrm{d}x\geq C r^{n+2\overline{N}_0}\int_{B_1}u^2\,\mathrm{d}x
$$
where $C$ depends only on $\lambda,\Lambda,L$ and $N_0$.
\end{Lemma}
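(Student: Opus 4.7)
\emph{Plan.} The lemma is a uniform non-degeneracy estimate, and the natural proof combines the doubling inequality \eqref{tipo.doubling} with a Harnack-type chain along segments, both consequences of Proposition \ref{prop.monoton}. Fix once and for all $R_{\ast}:=\min\{1/16,\,\Lambda^{-1/2}/4\}$, which depends only on $\lambda,\Lambda$. Since every $x_0\in B_{3/4}$ lies in $\overline{B_{7/8}}$ and satisfies both $R_{\ast}\leq \Lambda^{-1/2}(1-|x_0|)$ and $R_{\ast}\leq 1/16$, the definition of $\overline{N}_0$ in \eqref{overlineN0} provides the uniform bound $N(x_0,u,R_{\ast})\leq \overline{N}_0$ for every $u\in\mathcal{S}_{N_0}$. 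A routine rescaling argument ensures that \eqref{tipo.doubling}, though stated for centers in $B_{1/2}$, remains valid on balls of radius $\leq R_{\ast}$ centered in $B_{3/4}$ with constants depending only on $\mathcal{S}_{N_0}$.

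I would first apply \eqref{tipo.doubling} with the pair $(r,R_{\ast})$ to deduce, for $0<r\leq R_{\ast}$,
\[
\int_{B_r(x_0)} u^2\,\mathrm{d}x \;\geq\; c_1\, r^{n+2\overline{N}_0}\int_{B_{R_{\ast}}(x_0)} u^2\,\mathrm{d}x,
\]
where $c_1>0$ depends only on the class; the complementary range $r\in(R_{\ast},1/8]$ is automatic by monotonicity of $r\mapsto\int_{B_r(x_0)}u^2$ together with $r^{n+2\overline{N}_0}\leq 1$. The problem therefore reduces to proving a uniform lower bound $\int_{B_{R_{\ast}}(x_0)}u^2\geq c\int_{B_1}u^2$. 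To propagate the $L^2$ mass from $x_0$ to the origin, I would set up a Harnack chain: pick $k\lesssim 1/R_{\ast}$ equally spaced points $y_0=x_0,y_1,\dots,y_k=0$ on $[0,x_0]$ with $|y_j-y_{j+1}|\leq R_{\ast}/2$, so that $B_{R_{\ast}/2}(y_j)\subset B_{R_{\ast}}(y_{j+1})$; one application of \eqref{tipo.doubling} at each $y_j\in B_{3/4}$ with $(r,R)=(R_{\ast}/2,R_{\ast})$ gives
\[
\int_{B_{R_{\ast}}(y_j)} u^2 \;\leq\; C\cdot 2^{n+2\overline{N}_0}\int_{B_{R_{\ast}}(y_{j+1})} u^2,
\]
and iterating produces $\int_{B_{R_{\ast}}(x_0)}u^2\geq c_2\int_{B_{R_{\ast}}}u^2$ with $c_2>0$ uniform over $\mathcal{S}_{N_0}$.

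Finally, starting from the normalization $\|u\|_{L^2(B_{1/2})}=1$ and the frequency bound $N(0,u,1)\leq N_0$, a further (possibly iterated when $\Lambda^{-1/2}<1$) application of \eqref{tipo.doubling} centered at $0$ yields simultaneously $\int_{B_{R_{\ast}}}u^2\geq c_3$ and $\int_{B_1}u^2\leq C_4$, uniformly in $\mathcal{S}_{N_0}$. Chaining this with the first two steps delivers $\int_{B_r(x_0)}u^2\geq C r^{n+2\overline{N}_0}\int_{B_1}u^2$. The only mildly technical point I anticipate is verifying that the number $k$ of links in the Harnack chain of the second step, as well as the number of iterations required in the last step when $\Lambda^{-1/2}<1$, depends only on $\lambda,\Lambda$, so that the accumulated constants remain uniform over the whole class $\mathcal{S}_{N_0}$.
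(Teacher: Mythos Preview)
Your argument is correct and follows essentially the same route as the paper: reduce via the doubling estimate \eqref{tipo.doubling} to comparing $\int_{B_{R_\ast}(x_0)}u^2$ with $\int_{B_1}u^2$, and then propagate mass from $x_0$ to the origin by a finite chain of overlapping balls along the segment $[0,x_0]$, applying doubling at each link. The paper organizes this as two cases ($x_0\in B_{1/16}$ versus $x_0\in B_{3/4}\setminus B_{1/16}$) with an explicit recursive chain $x_{i+1}=\tfrac54 x_i-\tfrac14\,x_i/|x_i|$, while you use equally spaced points and treat all $x_0$ uniformly, but this is a cosmetic difference only.
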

\begin{proof}
For the sake of simplicity, we omit the dependence of the Almgren frequency on the metric $g$. By the definition of $\overline N_0$ (see also \cite[Theorem 3.2.10]{HanLin2}), we have
$$
N\left(x_0,u,r\right) \leq \overline N_0
,\qquad \mbox{for every }x_0 \in B_{3/4}, r \leq 1/8.
$$
Now, we split the proof in two cases.\\

Case 1: $x_0 \in B_{1/16}$. Then, by \eqref{tipo.doubling}, there exists $C=C(\lambda,\Lambda,L)$ such that
$$
\int_{B_{r}(x_0)}u^2 \,\mathrm{d}x\geq C8^{n+2\overline{N}_0} r^{n+2\overline{N}_0}\int_{B_{1/8}(x_0)}u^2\,\mathrm{d}x,
$$
for every $r \leq 1/8$. On the other hand, since $x_0\in B_{1/8}$ and $r<1/16$, we have that $B_{1/16}\subset B_{1/8}(x_0)$ and so
\begin{align*}
\int_{B_{r}(x_0)}u^2 \,\mathrm{d}x
\geq C 8^{n+2\overline{N}_0}r^{n+2\overline{N}_0}\int_{B_{1/16}}u^2\,\mathrm{d}x
\geq C \left(\frac12\right)^{n+2\overline{N}_0} r^{n+2\overline{N}_0}\int_{B_1}u^2\,\mathrm{d}x
\end{align*}
where in the last inequality we apply \eqref{tipo.doubling} at $x_0=0$ with $r=1/16$ and $R=1$.\\

Case 2: $x_0 \in B_{3/4}\setminus B_{1/16}$. In this case, we just need to iterate the previous argument, in order to reduce to Case 1. Given $x_0 \in B_{3/4}\setminus B_{1/16}, \, r<1/8$ we set $\rho_0=1/8$ and, for $i\in \N$, we consider the sequences
$$
x_{i+1} = \frac54 x_i - \frac14 \frac{x_i}{|x_i|},\qquad \rho_{i+1} = \frac58(1-|x_i|).
$$
With this choice, we get $B_{\rho_{i}/2}(x_{i+1})\subset B_{\rho_i}(x_i)$ and so
$$
\int_{B_{r}(x_0)}u^2\,\mathrm{d}x \geq C \frac{r^{n+2\overline{N}_0}}{\rho_0^{n+2\overline{N}_0} }\int_{B_{\rho_0}(x_0)}u^2 \,\mathrm{d}x\geq
C \frac{r^{n+2\overline{N}_0}}{\rho_0^{n+2\overline{N}_0} }\int_{B_{\frac{\rho_1}{2}}(x_1)}u^2 \,\mathrm{d}x\geq C \frac{r^{n+2\overline{N}_0}}{(\rho_0 \rho_1)^{n+2\overline{N}_0} }\int_{B_{\rho_1}(x_1)}u^2\,\mathrm{d}x.
$$
Since $|x_{i+1}|<|x_i|$ and $\rho_{i+1}>\rho_i$, by repeating this argument a finite number of times (with our choice the number of iteration is always less than $6$), we finally get
$$
\int_{B_{r}(x_0)}u^2\,\mathrm{d}x \geq \tilde{C} \frac{r^{n+2\overline{N}_0}}{(\rho_0 \rho_1)^{n+2\overline N_0}}\int_{B_{\overline{\rho}}(\overline{x})}u^2\,\mathrm{d}x,
$$
with $\overline{\rho}<1/16$ and $\overline{x} \in B_{1/8}$, and the result follows by applying Case 1.
\end{proof}
\begin{remark}
In \cite{LinLin,GarLin} the authors show similar growth estimate for the solution $u$ in terms of the distance from its nodal set. More precisely, by combining the $C^{1,\alpha}$-regularity of solutions to \eqref{equv} with the doubling-type estimate of Lemma \ref{lemma.hay}, there exists $C_2= C_2(n,\lambda,\Lambda)$ and $C_1$ depending only on the class $\mathcal{S}_{N_0}$, such that given $u \in \mathcal{S}_{N_0}$ and $\overline N_0$ as in \eqref{overlineN0}, we have
$$
C_1\mathrm{dist}(x,Z(u))^{\overline N_0}  \leq |u(x)| \leq C_2 \mathrm{dist}(x,Z(u)),
$$
for $x \in B_{3/4}$ (see \cite[Theorem 2.5]{LinLin}). Among the various implications of these inequalities, we stress the following estimate which relates the behavior of the solutions in $\mathcal{S}_{N_0}$ in two different neighborhoods of the nodal set: let $x_0\in Z(U)$ and $x_1\in \{u>0\}\cap B_{R}(x_0)$, then for $r<R<1/8$ we have
$$
\intn_{B_R(x_0)}u^2\,\mathrm{d}x \leq C \frac{1}{(R-r)^{\overline N_0}}\intn_{B_r(x_1)}u^2\,\mathrm{d}x
$$
for some $C$ depending only on the class $\mathcal{S}_{N_0}$.
\end{remark}

By using the class $\mathcal S_{N_0}$ it is possible to provide upper bounds for the size of the nodal sets and the singular one. Indeed, by \cite{HarSim,CheNabVal, NabVal} there exists $C_1,C_2>0$ depending on $\mathcal{S}_{N_0}$, such that
$$
\mathcal{H}^{n-1}(Z(u)\cap B_{1/2})\leq C_1,\quad
\mathcal{H}^{n-2}(S(u)\cap B_{1/2})\leq C_2,\qquad\mbox{for every }u\in\mathcal{S}_{N_0}.
$$
It is worth highlighting that in \cite{CheNabVal, NabVal} the authors established estimates of the constants $C_1,C_2$ (see also \cite{Lin} for the conjectured sharp dependence on $N_0$), while in \cite{CheNabVal, NabVal} estimates on the Minkowski content of $Z(u), S(u)$ and the set of critical points as well are obtained. Notice that in \cite{Han2}, the author provide the optimal dependence of the measure of $S(u)$ on $N_0$ in the $2$-dimensional case.
\subsection{Further results in two dimensions} 
In the two dimensional case $n=2$, it is possible to extend the analysis of the nodal set of solutions to elliptic PDEs to equations whose coefficients are H\"{o}lder continuous.
In such case, by elliptic regularity any weak solution is of class $C^{1,\alpha}_\loc(B_1)$ and,
        $$
        u(z) = P_{z_0}(A^{1/2}(z_0)(z-z_0)) + \Gamma_{z_0}(x) \quad \mbox{in }B_r(z_0)
        $$
        with
        $$
        |\Gamma_{z_0}(z)| \leq C|z-z_0|^{\mathcal{V}(z_0,u)+\delta},\quad|\nabla\Gamma_{z_0}(z)| \leq C|z-z_0|^{\mathcal{V}(z_0,u)-1+\delta}
        \qquad\quad\mbox{in }B_r(z_0),
        $$
        for some constants $C,\delta>0$ depending only on $\lambda, \Lambda$ and the H\"{o}lder seminorm of $|\nabla u|$. Thus, the nodal set $Z(u)$ splits into a regular part $R(u)$, which is locally a $(n-1)$-dimensional hyper-surface of class $C^{1,\alpha}$, and the singular part $S(u)$ which is a locally finite collection of isolated points. We refer to \cite{Aless1} for a detailed discussion on the topic.
\subsection{Comparison between two compact classes of solutions}
Let us introduce the uniformity compact class considered in \cite{LogMal2}; that is, a class of solutions in dimension two with a bound on the number of nodal domains. Given $n=2$ and $k_0\in\mathbb N\setminus\{0\}$, let
\begin{equation}\label{classT.intro}
\mathcal T_{k_0}=\left\{u \in H^1(B_1) \, : \, u\mbox{ solves }\eqref{equv}
 \mbox{ with }  A\in\mathcal A, \, C(u,B_1) \leq k_0, \, \|u\|_{L^2(B_{1/2})}=1\right\},
\end{equation}
where
$$C(u,B_1)= \#\left\{\mbox{connected components of } \{u\neq0\}\cap B_1\right\}.$$
In this section we show that $\mathcal S_{N_0}=\mathcal T_{k_0}$ when $n=2$ and $A$ is the identity matrix, in the sense of the following results (see also \cite[Corollary 3.3]{LinLin} for a $n$-dimensional proof of Proposition \ref{SinT}).
\begin{Proposition}[$\mathcal S_{N_0}\subseteq\mathcal T_{k_0}$]\label{SinT}
Let $n=2$ and $N_0>0$. Then there exist $0<\theta_0<1/2$ and $k_0\in\mathbb{N}\setminus\{0\}$ such that for any $0<r<\theta_0$,
$$\inf_{\rho\in[\frac{r}{2},\frac{3}{2}r]}C(u,B_\rho)\leq k_0$$
for any $u\in\mathcal S_{N_0}$.
\end{Proposition}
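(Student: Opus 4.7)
The plan is to argue by contradiction via a blow-up analysis inside the compact class $\mathcal{S}_{N_0}$. Assuming the statement fails, for each $k \in \mathbb{N}$ there exist $u_k \in \mathcal{S}_{N_0}$ and $r_k \in (0, 1/k)$ such that
\[
\inf_{\rho \in [r_k/2,\, 3 r_k/2]} C(u_k, B_\rho) > k.
\]
Setting $U_k(x) = u_k(r_k x)/H(0, u_k, g_k, r_k)^{1/2}$, the blow-up centered at $0$ as in Definition \ref{d:blowup}, the rescaling produces a bijection between connected components of $\{u_k\neq 0\}$ in $B_{\sigma r_k}$ and of $\{U_k\neq 0\}$ in $B_\sigma$, so
\[
\inf_{\rho \in [r_k/2,\, 3 r_k/2]} C(u_k, B_\rho) = \inf_{\sigma \in [1/2,\, 3/2]} C(U_k, B_\sigma).
\]
By Proposition \ref{p:blow-up}, up to a subsequence $U_k \to P$ in $C^{1,\alpha}_\loc(\R^2)$ with Hausdorff convergence $Z(U_k) \to Z(P)$, where $P$ is a non-trivial harmonic polynomial of degree $d \leq \overline{N}_0$.

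The heart of the proof exploits the two-dimensional geometry of $P$. Its critical set is finite, so the set of $\sigma \in [1/2, 3/2]$ for which $\partial B_\sigma$ meets a critical point of $P$ or is tangent to $Z(P)$ is finite. Picking $\sigma^\star$ outside this exceptional finite set, the angular parameterization of $P|_{\partial B_{\sigma^\star}}$ becomes a trigonometric polynomial of degree at most $d$ with only simple zeros, and therefore possesses exactly $N \leq 2d \leq 2\overline{N}_0$ zeros, partitioning $\partial B_{\sigma^\star}$ into $N$ arcs of constant sign. The $C^1$-convergence $U_k \to P$ on $\partial B_{\sigma^\star}$, combined with the simplicity of the zeros, then forces $U_k|_{\partial B_{\sigma^\star}}$ to have exactly $N$ simple zeros for every $k$ large, clustered near those of $P$ with no spurious extras.

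To conclude I invoke the maximum principle. Each $U_k$ solves $\mathrm{div}(\tilde A_k \nabla U_k) = 0$ and is non-trivial by unique continuation, so no connected component of $\{U_k \neq 0\} \cap B_{\sigma^\star}$ can be compactly contained in $B_{\sigma^\star}$: otherwise $U_k$ would vanish on its boundary and, via the maximum principle together with unique continuation, vanish identically. Since distinct components therefore meet $\partial B_{\sigma^\star}$ in disjoint non-empty unions of some of the $N$ arcs, we obtain $C(U_k, B_{\sigma^\star}) \leq N \leq 2\overline{N}_0$. Setting $k_0 := 2\overline{N}_0$ yields a contradiction whenever $k > k_0$, and we may take any $\theta_0 \in (0, 1/2)$.

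The main delicate point I expect is the $C^{1,\alpha}$-stability step in the second paragraph: one must carefully leverage the convergence to rule out spurious nodal intersections of $U_k$ with $\partial B_{\sigma^\star}$ and preserve transversality along the subsequence, so that the number of sign-change arcs is genuinely preserved in the limit. Once $Z(U_k) \cap \partial B_{\sigma^\star}$ is controlled, the maximum-principle step is immediate, and the uniform degree bound $d \leq \overline{N}_0$, inherited from the bounded Almgren frequency through Proposition \ref{p:blow-up}, is what ultimately fixes $k_0$ independently of $u \in \mathcal{S}_{N_0}$.
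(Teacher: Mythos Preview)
Your argument is correct and follows essentially the same route as the paper: contradiction, blow-up at the origin via Definition \ref{d:blowup} and Proposition \ref{p:blow-up} to a harmonic polynomial $P$, choice of a good radius where $Z(P)$ meets the circle transversally, and then passage of the finite nodal-domain count back to $U_k$ via $C^{1,\alpha}$ convergence. Your write-up is in fact more detailed than the paper's, which simply asserts that ``$U$ has a finite number of nodal domains in $B_{\overline\rho}$ and this should hold also for $U_k$ by $C^{1,\alpha}$ convergence''; you make this step explicit through the trigonometric-polynomial count on $\partial B_{\sigma^\star}$ and the maximum-principle argument forcing every nodal component to reach the boundary, which also yields the explicit bound $k_0=2\overline N_0$.
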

\proof
Seeking a contradiction, for some $N_0>0$ and along the sequences $\theta_n=\frac{1}{k}$, $u_k\in\mathcal S_{N_0}$ there exists $r_k<\theta_k$ such that
\begin{equation}\label{Cu_n}
\inf_{\rho\in[\frac{1}{2}r_k,\frac{3}{2}r_k]}C(u_k,B_\rho)\geq k.
\end{equation}
The blow-up sequence $U_k$ centered at the origin and associated to the sequence $r_k$ (see Definition \ref{d:blowup}) converges in $C^{1,\alpha}$ on any compact set to the limit $U$ which is a harmonic polynomial in $\R^2$ with $\mathrm{deg}(U)\leq N_0$. Let $\overline\rho\in[\frac{1}{2},\frac{3}{2}]$ chosen so that the nodal lines of $U$ intersect transversally $\partial B_{\overline\rho}$ and no singular point of $U$ lies on it. Then $U$ has a finite number of nodal domains in $B_{\overline\rho}$ and this should hold also for $U_k$ by $C^{1,\alpha}$ convergence, in contradiction with \eqref{Cu_n}.
\endproof

In order to prove the other inclusion, at least in case the coefficients are constant, we need a compactness result for harmonic functions in the class $\mathcal T_{k_0}$ by Nadirashvili. The result is \cite[Lemma 2.1]{LogMal2} obtained combining arguments in \cite{Nad0,Nad}.

\begin{Lemma}[Nadirashvili's compactness theorem]\label{NadCompact}
    Let $u_k$ be a sequence of harmonic functions in $B_1$ having at most $k_0$ nodal regions. Then there exist a subsequence $u_{k_i}$, a sequence $\alpha_{k_i}$ of real numbers and a nontrivial function $u$ such that $\alpha_{k_i}u_{k_i}$ converge to $u$ uniformly on compact subsets of $B_1$. Moreover, $u$ is harmonic.
\end{Lemma}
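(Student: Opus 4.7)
The plan is to combine the standard Montel-type compactness for harmonic functions with the nodal-region bound: normalization and convergence are routine once the right $L^\infty$ a priori estimate is in hand, so the crux is converting the topological bound on nodal regions into a quantitative bound valid on all compact subsets of $B_1$.

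First I would normalize by setting $\alpha_k := 1/\|u_k\|_{L^\infty(\overline{B_{1/2}})}$ and $v_k := \alpha_k u_k$, so that each $v_k$ is harmonic in $B_1$, satisfies $\|v_k\|_{L^\infty(\overline{B_{1/2}})} = 1$, and still has $C(v_k,B_1)\leq k_0$. The central step is then to prove that for every $\rho\in(1/2,1)$ there exists $C(\rho,k_0)>0$ with
\begin{equation*}
\|v_k\|_{L^\infty(\overline{B_\rho})}\leq C(\rho,k_0),\qquad \text{uniformly in }k.
\end{equation*}
The idea is to exploit the nodal partition: each $v_k$ splits $B_1$ into at most $k_0$ connected components on each of which $v_k$ has constant sign, so the positive Harnack inequality applies in every component. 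Starting from $|v_k|\le 1$ on $\overline{B_{1/2}}$, one propagates an $L^\infty$ control through a Harnack chain supported in at most $k_0$ components crossing the annulus $\overline{B_\rho}\setminus B_{1/2}$, with Harnack constants quantified in terms of the combinatorial complexity of the nodal partition, which is itself controlled by $k_0$ via Euler's formula for the planar nodal graph.

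Once the bound above is established, interior Cauchy estimates for harmonic functions give uniform $C^m_{\loc}(B_1)$ bounds for every $m\in\N$, and Arzel\`a--Ascoli produces a subsequence $v_{k_i}\to u$ uniformly on compact subsets of $B_1$, with $u$ harmonic as a uniform limit of harmonic functions. Nontriviality is automatic from the normalization: since $\overline{B_{1/2}}$ is a compact subset of $B_1$, passing to the limit yields $\|u\|_{L^\infty(\overline{B_{1/2}})} = \lim_i\|v_{k_i}\|_{L^\infty(\overline{B_{1/2}})} = 1$, so $u\not\equiv 0$.

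The main obstacle is precisely the Harnack-chain step above, since the nodal components depend on $k$ and may become thin or pinch as $k\to\infty$, threatening to deteriorate the Harnack constants. This is the content of Nadirashvili's planar analysis in \cite{Nad0,Nad}: in dimension two the nodal set of a harmonic function is a locally finite graph whose combinatorial complexity (number of singular vertices, edges, faces) is governed by $k_0$, which provides the geometric rigidity needed to uniformize the Harnack constants. A complete argument would need to either invoke Nadirashvili's original normal-family framework directly, or upgrade the heuristic Harnack chain into a quantitative statement that tolerates mild degeneration of the intermediate components, perhaps via a two-scale argument that first extracts a Hausdorff limit of the nodal partitions and then deduces the $L^\infty$ estimate from the regularity of the limiting partition.
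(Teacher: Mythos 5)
The paper does not prove this lemma at all: immediately before the statement it writes that the result is \cite[Lemma 2.1]{LogMal2}, obtained by combining Nadirashvili's arguments in \cite{Nad0,Nad}, and leaves it at that. So your job is not to reproduce a proof in the paper, but to supply one, and your sketch does not close the loop.

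The central claim you need --- that with the normalization $\|v_k\|_{L^\infty(\overline{B_{1/2}})}=1$ one has $\|v_k\|_{L^\infty(\overline{B_\rho})}\le C(\rho,k_0)$ for every $\rho<1$ --- is exactly the hard part, and the Harnack-chain mechanism you propose does not deliver it. Three distinct obstructions: (a) the Harnack inequality compares $\sup$ and $\inf$ of $|v_k|$ on a compact subset of a \emph{single} nodal component; it gives no information whatsoever across a nodal interface, where $v_k$ vanishes, so there is no way to "propagate" a bound from one component to an adjacent one; (b) there can be nodal components contained entirely in the annulus $B_\rho\setminus B_{1/2}$ and touching $\partial B_1$, hence not anchored to the region where you know $|v_k|\le 1$, and on such a component you have no starting estimate at all; (c) even on components that do meet $B_{1/2}$, the Harnack constant on a compact subset depends on the geometry of that component and blows up as the component pinches, and the bound $C(v_k,B_1)\le k_0$ by itself does not prevent pinching. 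You correctly flag (c) at the end, but (a) and (b) are fatal already, not merely quantitative worries. The actual Nadirashvili mechanism is of a different nature: it exploits that the nodal-domain bound controls the number of sign changes of $u_k$ on circles $\partial B_r$, and hence (via Sturm--Hurwitz-type considerations on the Fourier expansion of $u_k|_{\partial B_r}$) the vanishing order and the growth of $u_k$, leading to a doubling/frequency-type estimate; the normalization then has to be chosen compatibly with that estimate, not a priori on a fixed ball. As written, your argument does not prove the lemma; a correct writeup would either import the relevant estimates from \cite{Nad0,Nad,LogMal2}, as the paper does, or develop the boundary-trace/frequency argument from scratch.
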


Then, at least in case $A=\mathbb I$, one can prove the other inclusion.
\begin{Proposition}[$\mathcal T_{k_0}\subseteq\mathcal S_{N_0}$]\label{TinS}
Let $n=2$, $A=\mathbb I$, $k_0>0$ and $0<\theta<1$. Then there exists $N_0>0$ such that
$$N(0,u,\theta)\leq N_0$$
for any $u\in\mathcal T_{k_0}$.
\end{Proposition}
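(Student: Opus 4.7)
The plan is to argue by contradiction using Nadirashvili's compactness theorem (Lemma \ref{NadCompact}). Suppose the conclusion fails: then there exist $k_0$, $\theta \in (0,1)$ and a sequence $u_k \in \mathcal{T}_{k_0}$ with $N(0, u_k, \theta) \to +\infty$.

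Since every $u_k$ is harmonic in $B_1$ with at most $k_0$ nodal components, Lemma \ref{NadCompact} furnishes a subsequence (not relabeled), normalizing constants $\alpha_k \in \R$, and a nontrivial harmonic function $u$ on $B_1$ such that $v_k := \alpha_k u_k \to u$ uniformly on compact subsets of $B_1$. Interior regularity for harmonic functions upgrades this convergence to $C^\infty$ on $\overline{B_\theta} \subset B_1$.

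The crucial observation is that when $A = \mathbb I$ the metric $g$ in \eqref{e:g} reduces to the Euclidean one, the weight $\omega$ is identically $1$, the constant $C$ in Proposition \ref{prop.monoton} vanishes, and $N(0,\cdot,\theta)$ coincides with the classical Almgren frequency
$$
N(0, v, \theta) \;=\; \frac{\theta \int_{B_\theta} |\nabla v|^2\,dx}{\int_{\partial B_\theta} v^2\,d\sigma}.
$$
This functional is invariant under multiplication by nonzero scalars, so $N(0,v_k,\theta) = N(0,u_k,\theta) \to +\infty$. On the other hand, the $C^\infty$ convergence on $\overline{B_\theta}$ yields
$$
\int_{B_\theta}|\nabla v_k|^2 \;\longrightarrow\; \int_{B_\theta}|\nabla u|^2, \qquad \int_{\partial B_\theta} v_k^2 \;\longrightarrow\; \int_{\partial B_\theta} u^2.
$$

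To reach a contradiction, I need the denominator in the limit to be strictly positive, i.e.\ $\int_{\partial B_\theta} u^2 > 0$. If this failed, then $u$ would vanish identically on $\partial B_\theta$; being harmonic in $B_\theta$ with zero boundary trace, $u$ would be identically zero in $B_\theta$, and unique continuation for harmonic functions would force $u \equiv 0$ throughout $B_1$, contradicting the nontriviality of the limit guaranteed by Nadirashvili. Hence $N(0, v_k, \theta) \to N(0, u, \theta) < +\infty$, contradicting the assumed blow-up of the frequency. The only delicate step is precisely this nondegeneracy of the limiting denominator, handled cleanly by combining nontriviality of the Nadirashvili limit with unique continuation; once this is in place the scale-invariance of $N$ and the $C^\infty$ passage to the limit do the rest.
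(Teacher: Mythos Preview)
Your proof is correct and follows essentially the same contradiction argument as the paper: assume the frequency blows up along a sequence in $\mathcal{T}_{k_0}$, apply Nadirashvili's compactness theorem, and use convergence of numerator and denominator of the frequency at scale $\theta$ together with scale-invariance of $N$ to reach a contradiction. You are in fact slightly more explicit than the paper in justifying why the limiting denominator $\int_{\partial B_\theta} u^2$ is strictly positive via unique continuation, which the paper's proof states without comment.
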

\proof
Let us suppose by contradiction the existence of $k_0>0$ and $0<\theta<1$ such that along a sequence $u_k$ in $\mathcal T_{k_0}$
\begin{equation}\label{Ntheta>n}
N(0,u_k,\theta)\geq k.
\end{equation}
Then, by Lemma \ref{NadCompact}, up to subsequences there exist real numbers $\alpha_k$ such that $\alpha_k u_k$ converge to $u$ uniformly on compact sets. The latter convergence implies $H^1_\loc$ convergence. Then
\begin{equation*}
H(0,\alpha_k u_k,\theta)\to H(0,u,\theta)>0,\qquad
        E(0,\alpha_k u_k,\theta)\to E(0,u,\theta)<+\infty.
\end{equation*}
Then $N(0,\alpha_k u_k,\theta)\to N(0,u,\theta)\in[0,+\infty)$, in contradiction with \eqref{Ntheta>n}.
\endproof

\section{Elliptic PDEs degenerating on nodal sets}\label{sec:functional}
In this section we give the precise notion of solutions to equations \eqref{eqw} for general exponents; that is,
\begin{equation*}
\mathrm{div}\left(|u|^a A\nabla w\right)=0\qquad \mbox{and}\qquad \mathrm{div}\left(A\nabla u\right)=0\quad\mathrm{in \ }B_1.
\end{equation*}
The main results of the section include a classification of exponents $a \in \R$ for which the weight is integrable, a comprehensive characterization of weighted Sobolev spaces (with special attention to the validity of the property $(H\equiv W)$). Moreover, we discuss in more detail the peculiar case $a=2$ and its connection with ratios of $A$-harmonic functions, i.e. solutions to \eqref{equv}, sharing zero sets. Finally, we provide Sobolev inequalities and local boundedness for solutions to \eqref{eqw}, with constants uniform-in-$\mathcal{S}_{N_0}$.

\subsection{Weighted Sobolev spaces}
We start by showing how the presence of singular points affects the local integrability of the weight $|u|^a$.
To maintain conciseness in notations, in this work we consider solutions $u \in \mathcal{S}_{N_0}$, and the estimates are formulated uniformly with respect to this class. It is worth noting that, for a given solution $u$ of \eqref{equv}, these results can be also expressed by substituting the dependence on $N_0$ with the Almgren frequency \eqref{e:Almgren} evaluated in $B_1$.

Let us define
\begin{equation}\label{a_S}
    a_{\mathcal{S}}:=\min\left\{1,\frac{2}{\overline N_0}\right\}\in(0,1],
\end{equation}
where $\overline N_0$ is the quantity introduced in \eqref{overlineN0}.
\begin{Lemma}\label{l:a-N0}
    Let $n\geq 2, u\in \mathcal{S}_{N_0}$ and $a \in \R$. Then
    \begin{itemize}
        \item[\rm{(i)}] $|u|^a \in L^1(B_{7/8})$ for every $a>-a_{\mathcal{S}},$ uniformly-in-$\mathcal{S}_{N_0}$;
        \item[\rm{(ii)}] $|u|^a$ is $\mathcal{A}_2$-Muckenhoupt in $B_{7/8}$ for every $a \in (-a_{\mathcal{S}},a_{\mathcal{S}})$, uniformly-in-$\mathcal{S}_{N_0}$.
    \end{itemize}
\end{Lemma}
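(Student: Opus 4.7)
The plan is to reduce both statements to the analogous properties for harmonic polynomials of degree at most $\overline N_0$, exploiting the blow-up compactness of Proposition \ref{p:blow-up}. For part (i), the case $a\geq 0$ is immediate via the uniform $L^\infty$ bound on $\mathcal S_{N_0}$ (Moser iteration plus the normalization $\|u\|_{L^2(B_{1/2})}=1$). For $-a_{\mathcal S}<a<0$, I would apply the layer-cake formula
\begin{equation*}
\int_{B_{7/8}}|u|^a\,\mathrm dx = |a|\int_0^\infty\bigl|\{|u|<s\}\cap B_{7/8}\bigr|\,s^{a-1}\,\mathrm ds,
\end{equation*}
so that, since the tail $s\geq 1$ is controlled by $|a|^{-1}|B_{7/8}|$, everything reduces to the sub-level set estimate $\bigl|\{|u|<s\}\cap B_{7/8}\bigr|\leq C\,s^{a_{\mathcal S}}$, uniformly in $\mathcal S_{N_0}$, which then yields $\int_0^1 s^{a+a_{\mathcal S}-1}\,\mathrm ds<\infty$ precisely when $a>-a_{\mathcal S}$.

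The crucial sub-level set bound would be proved by a contradiction-compactness argument. If it fails along $u_k\in\mathcal S_{N_0}$ and $s_k\to 0^+$, one can extract concentration points $x_k$ and rescale via Definition \ref{d:blowup}; by Proposition \ref{p:blow-up} and \eqref{boundsullaN}, the blow-ups converge to a harmonic polynomial $P$ with $\deg P\leq \overline N_0$. Near a regular point of $Z(P)$ the linear vanishing $|P|\sim\mathrm{dist}(\cdot,Z(P))$ gives a contribution of order $s$ to the sub-level sets; near a singular point of vanishing order $m\leq\overline N_0$, the transverse two-dimensional structure $P\sim\rho^m\Theta(\theta)$ (with $\Theta$ a spherical harmonic on $S^1$ vanishing only transversally) yields, by a direct polar computation, a contribution of order $s^{2/m}$. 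Globalizing via a finite covering and using the uniform bounds $\mathcal H^{n-1}(R(u))\leq C_1$ and $\mathcal H^{n-2}(S(u))\leq C_2$ from \cite{CheNabVal,NabVal} produces the claimed exponent $\min(1,2/\overline N_0)=a_{\mathcal S}$.

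For part (ii), the Muckenhoupt $\mathcal A_2$ bound
\begin{equation*}
\sup_{B\subset B_{7/8}}\Bigl(\intn_B|u|^a\Bigr)\Bigl(\intn_B|u|^{-a}\Bigr)\leq C
\end{equation*}
splits naturally into two cases: when $B=B_r(x_0)$ has $\mathrm{dist}(x_0,Z(u))$ comparable to $r$, the $C^{1,\alpha}$-regularity combined with Lemma \ref{lemma.hay} gives $|u|\sim\max_B|u|$ on $B$ and the product is uniformly controlled; when $B$ intersects $Z(u)$, I would argue again by contradiction, using Definition \ref{d:blowup} and Proposition \ref{p:blow-up} to extract a harmonic limit $P$ with $\deg P\leq \overline N_0$, for which $|P|^a$ is the classical $\mathcal A_2$-Muckenhoupt weight in the range $|a|<a_{\mathcal S}$ (via the same transverse polar analysis as in part (i)). Part (i), just established, supplies the equi-integrability required to pass to the limit in the averages.

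The main obstacle is quantitatively tracking, through the compactness step, the dependence of the exponent $a_{\mathcal S}=\min(1,2/\overline N_0)$ on the degree of the limiting harmonic polynomial, and passing from a local estimate near a single singular point to a global one on $B_{7/8}$. This is where the sharp exponent $2/\overline N_0$ first appears, reflecting the codimension-two nature of the singular set $S(u)$ on which $u$ may vanish with the worst order $\overline N_0$, and where both the uniform frequency bound $N(0,u,1)\leq N_0$ and the uniform size bounds on $R(u), S(u)$ are essential.
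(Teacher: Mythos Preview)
Your route differs genuinely from the paper's. The paper does not use layer-cake or blow-up compactness at all; it invokes the pointwise expansion \eqref{taylor} of Han to compare $u$ locally with a homogeneous harmonic polynomial $P_{x_0}$ of degree $\mathcal V(x_0,u)\leq\overline N_0$, and then proves $\int_{B_1}|P|^a\,\mathrm{d}x<\infty$ for $a>-a_{\mathcal S}$ by \emph{induction on the ambient dimension $n$}: writing $P=r^k\Phi(\theta)$ in spherical coordinates, the radial integral requires only $a>-n/k$, while $\Phi$ is a spherical harmonic on $S^{n-1}$ whose vanishing order is everywhere $\leq k-1$, so the angular integral is handled by the inductive hypothesis in dimension $n-1$. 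Part (ii) is then dispatched in one line by symmetry in $a\mapsto -a$.

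Your layer-cake reduction is equivalent in principle, but the sketch has a real gap at the core step. The claim that near a singular point of order $m$ the polynomial has a ``transverse two-dimensional structure $P\sim\rho^m\Theta(\theta)$ with $\Theta$ a spherical harmonic on $S^1$ vanishing only transversally'' is valid only when $n=2$, or at generic points of the top stratum of $S(P)$. For $n\geq 3$ the blow-up at an order-$m$ singular point is an $m$-homogeneous harmonic polynomial in $\R^n$ whose spherical trace $\Phi$ on $S^{n-1}$ may itself vanish to order up to $m-1$ along a set of positive $(n-2)$-measure; there is no reduction to an $S^1$ model, and extracting the exponent $2/m$ from $\Phi$ requires precisely the dimensional induction the paper carries out. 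The uniform bounds $\mathcal H^{n-1}(R(u))\leq C_1$, $\mathcal H^{n-2}(S(u))\leq C_2$ from \cite{CheNabVal,NabVal} do not substitute for this, since they give no control on the stratified vanishing profile. A secondary issue is that the contradiction--compactness passage from $\mathcal S_{N_0}$ to harmonic polynomials is delicate for sub-level sets (they do not behave well under $C^{1,\alpha}_\loc$ convergence without equi-integrability, which is what you are trying to prove), whereas the quantitative remainder in \eqref{taylor} gives the comparison directly. If you wish to keep the layer-cake framing, replace the ``transverse 2D'' heuristic by the induction on $n$ for sub-level sets of homogeneous harmonic polynomials, and replace the blow-up step by the pointwise expansion \eqref{taylor}.
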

\begin{proof}

First, in view of the analysis developed in \cite{Han,Han2,HanLin2}, for every $x_0 \in Z(u)\cap B_{7/8}$ and $r\in (0,1-|x_0|)$, there exist a non-trivial homogeneous harmonic polynomial $P_{x_0}$ of degree $\mathcal{V}(x_0,u)$ and a function $\Gamma_{x_0}$ such that
        \be\label{taylor}
        u(x) = P_{x_0}(A^{1/2}(x_0)(x-x_0)) + \Gamma_{x_0}(x) \quad \mbox{in }B_r(x_0)
        \ee
        with
        $$
        \begin{cases}
        |\Gamma_{x_0}(x)| \leq C|x-x_0|^{\mathcal{V}(x_0,u)+\delta}\\
        |\nabla\Gamma_{x_0}(x)| \leq C|x-x_0|^{\mathcal{V}(x_0,u)-1+\delta}
        \end{cases}\quad\mbox{in }B_r(x_0)
        $$
        for some constants $C,\delta>0$ depending only on $n, \lambda, \Lambda$ and $L$.
We stress that, by definition of \eqref{overlineN0}, the vanishing order $\mathcal{V}(x_0,u)$ are bounded by $\overline{N}_0$, for every $x_0 \in B_{7/8}$ and uniformly-in-$\mathcal{S}_{N_0}$.

        Notice also that the local integrability of $|u|^a$ in $B_{7/8}$, follows once we prove local integrability of harmonic polynomials $P$ of degree $k\leq \overline N_0$.
        In other words, we prove the result showing that, whenever the weight has the form $|\omega|^a$, where $\omega$ satisfies a local expansion of the form \eqref{taylor} with $P$ being a homogeneous harmonic polynomials of degree $k\leq \overline N_0$, then it is locally integrable for $a>-a_{\mathcal{S}}$.

        Now, we proceed by proving the latter statement by induction on the dimension $n\geq 2$. In order to ease the exposition, with a small abuse of notation, we will set any integration in the unitary ball even if one should do it in the ball $B_{7/8}$ where the vanishing order of functions $u\in\mathcal{S}_{N_0}$ is bounded uniformly-in-$\mathcal{S}_{N_0}$ by \eqref{overlineN0}. \\

\noindent Case $n=2$. In view of the preliminary discussion, let $P$ be a homogeneous harmonic polynomial in $\R^2$ and  $(r,\theta) \in [0,+\infty)\times [0,2\pi)$ be the polar coordinates in $\R^2$. Then, up to a rotation and a multiplicative constant, we have that
$$
\quad P(r,\theta) = r^k \cos(k\theta)\quad\mbox{with }k\geq 1.
$$
We stress that $P$ depends on two-variables if and only if $k\geq 2$. It is clear that if $k=1$ then $P$ is linear (i.e.  one-dimensional), then $|P|^a \in L^1_\loc(\R^2)$ if and only if $a>-1$. On the other hand, if $k\geq 2$ (i.e. two dimensional) we have
$$
\int_{B_1} |P(x)|^a \,\mathrm{d}x = \int_0^1 r^{ka+1}\,\mathrm{d}r\int_0^{2\pi}|\cos(k\theta)|^a\,\mathrm{d}\theta < +\infty
$$
if and only if $a>-2/k\geq -1$. Now, the result follows by applying the assumption that $k\leq  \overline N_0$.\\

\noindent Case $n\geq 3$. Up to a rotation, we can suppose that the $k$-homogeneous harmonic polynomial $P$ either is $d$-dimensional with $1\leq d\leq n-1$, that is
$$
P(x',x'')=P(x',0), \quad \mbox{for every }(x',x'') \in \R^{d}\times \R^{n-d}
$$
or it depends on $n$ variables and $k\geq n$. The case $P$ is $d$-dimensional with $1\leq d\leq n-1$ follows from the inductive step: indeed if we consider the generalized cylinder $C_1 = B_1'\times [-1,1]^{n-d}\subset \R^n$, with $B_1'\subset \R^d$ the unitary $d$-dimensional ball, we get
$$
\int_{C_1} |P(x)|^a \,\mathrm{d}x = \int_{B_1'} |P(x',0)|^a \,\mathrm{d}x',
$$
which is finite for every $a>-a_{\mathcal{S}}$. Therefore, we can restrict ourselves to the case where $P$ is of degree $k\geq n$ and it depends on $n$-variables. More precisely, in view of the dimension reduction of homogeneous functions and the upper semi-continuity of the vanishing order, we can assume that
$$
\{x \in \R^n\colon \mathcal{V}(x,P)=k\} = \{0\}\qquad \mbox{and}\qquad
\mathcal{V}(x_0,P)\leq k-1,\mbox{ for every }x_0 \in Z(P).
$$
Therefore, if we denote with $(r,\theta) \in [0,+\infty)\times \mathbb S^{n-1}$ the spherical coordinates in $\R^n$, we get
\be\label{e:Pndim}
\int_{B_1} |P(x)|^a \,\mathrm{d}x = \int_0^1 r^{ka+n-1}\,\mathrm{d}r\int_{\mathbb S^{n-1}}|P(\theta)|^a\,\mathrm{d}\sigma(\theta),
\ee
where $P\colon \mathbb S^{n-1} \to \R$ satisfies
\be\label{e:Pn-equation}
-\Delta_{\mathbb S^{n-1}} P = k (k+n-2) P\quad \mbox{in }\mathbb S^{n-1},\qquad
\mathcal{V}(x_0,P) \leq k-1 \quad \mbox{for every }x_0 \in Z(P)\cap \mathbb S^{n-1}.
\ee
Naturally, the first integral in the right hand side of \eqref{e:Pndim} is finite if and only if $a>-n/k$.\\
On the other hand, up to rephrase the Taylor-type expansion \eqref{taylor} with respect to the geodesic distance of $\mathbb S^{n-1}$, we know that solutions to \eqref{e:Pn-equation} enjoy an expansion of the form \eqref{taylor} in dimension $n-1$. Thus, by the inductive step, the second integral in \eqref{e:Pndim} is finite for every
$$
a> -a_{\mathcal{S}}.
$$
and the result follows immediately once we notice that $a_{\mathcal{S}} < n/\overline N_0$. On the other hand, the final part of the result follows by requiring that both $|u|^a, |u|^{-a}$ are locally integrable.
\end{proof}

Now, we delve into the notion of weighted Sobolev spaces, with a focus on the interplay between sets of zero weighted capacity and the class of regular functions dense in the energy space.
\begin{Definition}[Weighted Sobolev space $H$]\label{d:sobolev}
Let $u\in\mathcal{S}_{N_0}$ and $a>-a_{\mathcal{S}}$, then the weighted Sobolev-type space $H^1(B_1,|u|^a)$ is defined as the completion of $C^\infty(\overline{B_1})$ with respect to the norm
\be\label{e:norm-ua}
\norm{w}{H^1(B_1,|u|^a)}^2 = \int_{B_1} |u|^a \left(w^2 + |\nabla w|^2\right)\,\mathrm{d}x.
\ee
\end{Definition}
\begin{Lemma}\label{l:capacity}
Let $u\in\mathcal{S}_{N_0}$. Then:
\begin{enumerate}
\item[\rm{(i)}] if $a \geq 1$, then the nodal set $Z(u)\cap B_1$ has null $H^1(|u|^a)$-capacity;
\item[\rm{(ii)}] if $a \geq 0$, then the singular set $S(u)\cap B_1$ has null $H^1(|u|^a)$-capacity.
\end{enumerate}
\end{Lemma}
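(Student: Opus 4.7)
The plan is to exhibit, for every $\varepsilon>0$, a smooth cutoff $\eta_\varepsilon\in C^\infty(\overline{B_1})$ equal to $1$ in a neighborhood of the relevant set with $\|\eta_\varepsilon\|_{H^1(B_1,|u|^a)}\to 0$ as $\varepsilon\to 0$; this is exactly the statement that the weighted capacity of the set vanishes.

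For (ii), since $u\in C^0(B_1)$ and $a\ge 0$, the weight $|u|^a$ is bounded on compact subsets, so on any compact $K\subset B_1$ one has $\|\eta\|_{H^1(B_1,|u|^a)}\le C(K)\,\|\eta\|_{H^1(B_1)}$, and it suffices to prove that $S(u)\cap B_1$ has null classical $H^1$-capacity. This follows from $\dim_{\mathcal{H}}(S(u))\le n-2$: for $n\ge 3$, the bound $\mathcal{H}^{n-2}(S(u)\cap B_{1/2})<+\infty$ recalled right after \eqref{overlineN0} combined with standard potential theory implies that $S(u)$ is $H^1$-polar; for $n=2$, the singular set consists of isolated points (see the discussion in the two-dimensional subsection above), which are $H^1$-polar in $\mathbb{R}^2$.

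For (i) with $a\ge 1$, I would decompose $Z(u)=R(u)\cup S(u)$ and use the subadditivity of the capacity together with (ii) (applicable since $a\ge 1\ge 0$) to reduce matters to showing that $R(u)$ has null $H^1(|u|^a)$-capacity. I would then construct the cutoff by composing a $1$-dimensional profile with $|u|$: set $\eta_\varepsilon(x)=\chi(x)\,\rho_\varepsilon(|u(x)|)$, where $\chi\in C^\infty_c(B_1)$ is a spatial cutoff equal to $1$ on a compact $K\subset B_1$ containing $R(u)\cap B_{7/8}$, and $\rho_\varepsilon\colon[0,\infty)\to[0,1]$ is smooth, equal to $1$ on $[0,\varepsilon]$ and supported in $[0,r_\varepsilon]$. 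Since $\nabla\eta_\varepsilon=\rho_\varepsilon'(|u|)\,\mathrm{sign}(u)\,\nabla u$ on $\{u\ne 0\}$ up to the contribution of $\nabla\chi$, the coarea formula yields
\begin{equation*}
\int_{B_1}|u|^a|\nabla\eta_\varepsilon|^2\,\mathrm{d}x\;\le\;C\int_0^{r_\varepsilon} t^a\, |\rho_\varepsilon'(t)|^2\,F(t)\,\mathrm{d}t+\text{l.o.t.},\qquad F(t):=\int_{\{|u|=t\}\cap\mathrm{supp}\,\chi}|\nabla u|\,\mathrm{d}\mathcal{H}^{n-1},
\end{equation*}
where the lower-order terms generated by $\nabla\chi$ are controlled as in (ii) after excising a small neighborhood of $S(u)$.

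The choice of the profile $\rho_\varepsilon$ is dictated by $a$: for $a>1$ a linear profile with $|\rho_\varepsilon'|\sim 1/\varepsilon$ on $[\varepsilon,2\varepsilon]$ gives an energy bound $\sim \varepsilon^{a-1}\to 0$; in the critical case $a=1$ I would use the logarithmic cutoff $\rho_\varepsilon(t)=\log(\sqrt\varepsilon/t)/\log(\varepsilon^{-1/2})$ on $[\varepsilon,\sqrt\varepsilon]$, which gives energy $\sim 1/\log(\varepsilon^{-1/2})\to 0$. The main obstacle is the uniform-in-$t$ control of $F(t)$ for small $t>0$ after excising a neighborhood of $S(u)$: there, by \eqref{taylor} applied at points of $R(u)$ (where the principal part is linear), $R(u)$ is locally a $C^{1,1-}$ hypersurface, $|u|$ is comparable to the signed distance to $R(u)$, and the implicit function theorem provides a tubular neighborhood parametrization of $\{|u|=t\}$ with uniformly bounded $\mathcal{H}^{n-1}$-measure and $|\nabla u|$ bounded; together with a finite covering argument this yields $F(t)\le C$ for small $t$ and concludes the proof.
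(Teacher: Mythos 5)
Your treatment of (ii) is correct and, up to the justification of one fact, follows the same route as the paper: reduce to the unweighted $H^1$-capacity of $S(u)$ because $|u|^a$ is bounded for $a\ge 0$. The paper then simply cites the vanishing of $\mathrm{Cap}(S(u))$ (from \cite{Han}), whereas you re-derive it from $\mathcal{H}^{n-2}(S(u))<\infty$ (for $n\ge 3$) and from $S(u)$ being discrete (for $n=2$); both justifications are legitimate.

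For (i), your route is genuinely different from the paper's and contains a gap. The paper does \emph{not} split $Z(u)$ into $R(u)\cup S(u)$: it takes a single logarithmic cutoff
\[
w_\eps=f_\eps(u),\qquad f_\eps(t)=\tfrac{\log|t|-\log\eps}{\log\eps}\ \ \text{on}\ \eps^2\le|t|\le\eps,\quad f_\eps\equiv1\ \text{on}\ |t|\le\eps^2,
\]
which is identically $1$ on a neighborhood of all of $Z(u)$ at once (including $S(u)$), and then bounds
\[
\int_{B_1}|u|^a|\nabla w_\eps|^2\,\mathrm{d}x=\frac{1}{(\log\eps)^2}\int_{\{\eps^2\le|u|\le\eps\}}|u|^{a-2}|\nabla u|^2\,\mathrm{d}x\lesssim\frac{1}{(\log\eps)^2}\int_{\eps^2}^{\eps}t^{a-2}\,\mathrm{d}t\to 0
\]
for every $a\ge 1$, with the constant coming from $\|\nabla u\|_\infty$ and the uniform nodal-set (and hence level-set) measure bounds available for $u\in\mathcal S_{N_0}$. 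Your proposal instead splits off $S(u)$, uses different profiles for $a>1$ (linear) and $a=1$ (logarithmic), and claims $F(t)=\int_{\{|u|=t\}\cap\mathrm{supp}\,\chi}|\nabla u|\,\mathrm d\mathcal H^{n-1}\le C$ via a tubular-neighborhood parametrization of $R(u)$; but that parametrization is only valid \emph{away} from $S(u)$, and since $S(u)\subset\overline{R(u)}$, your cutoff $\chi\rho_\eps(|u|)$ either fails to be $\equiv 1$ near the part of $R(u)$ close to $S(u)$ (if $\chi$ excises a $\delta$-neighborhood of $S(u)$) or the $F(t)$ bound you invoke is not justified (if $\chi$ does not excise). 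To close this you would have to either prove a uniform level-set bound $\mathcal H^{n-1}(\{|u|=t\})\le C$ valid up to $S(u)$ (which is what the paper tacitly uses), or exhaust $R(u)$ by $R(u)\setminus N_{1/k}(S(u))$ and invoke countable subadditivity of the weighted capacity -- neither step is carried out in your sketch. Note also that the paper's single logarithmic profile already handles $a=1$ and $a>1$ simultaneously, so the case split in your proposal is unnecessary overhead.
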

\begin{proof}
    We split the proof into two cases accordingly to the values of $a$.\\

    Case $a\geq 1$. Miming the construction in \cite[Remark 3.3]{TerTorVit1}, let $\eps>0$ and consider
$$
f_\eps(t)=\begin{cases}
    1 &\mbox{if } |t|\leq \eps^2\\
    \frac{\log|t|- \log\eps}{\log\eps} &\mbox{if } \eps^2\leq |t|\leq \eps \\
    0 &\mbox{if } |t|\geq \eps.
\end{cases}
$$
Therefore, if we set $w_\eps:=f_\eps(u) \in H^1(B_1)$, by coarea formula, we get
$$
\begin{aligned}
\int_{B_1}|u|^a|\nabla w_\eps|^2\,\mathrm{d}x = \frac{1}{(\log\eps)^2}\int_{B_1\cap \{\eps^2\leq |u|\leq \eps\}}|u|^{a-2}|\nabla u|^2\,\mathrm{d}x \leq \frac{C}{(\log\eps)^2}\norm{\nabla u}{L^\infty(B_1)}^2\int_{\eps^2}^\eps t^{a-2}\,\mathrm{d}t
\end{aligned}
$$
where, by integration, the right hand side approaches zero as $\eps \to 0^+$. Therefore, up to regularize $w_\eps$ with mollifiers with sufficiently small support, we deduce that the nodal set $Z(u)\cap B_1$ has null $H^1(|u|^a)$-capacity.\\

Case $a\geq 0$. Since the $H^1$-capacity of the singular set $S(u)$ is null, the result follows immediately once we notice that
\be\label{e:confronto-misure}
\int_{B_1}|u|^a \varphi^2 \,\mathrm{d}x \leq \norm{u}{L^\infty(B_1)}^a
\int_{B_1}\varphi^2 \,\mathrm{d}x,\qquad
\int_{B_1}|u|^a |\nabla \varphi|^2 \,\mathrm{d}x \leq \norm{u}{L^\infty(B_1)}^a
\int_{B_1}|\nabla \varphi|^2 \,\mathrm{d}x,
\ee
for every $\varphi \in H^1(B_1)$.
\end{proof}
\begin{Proposition}\label{p:alternative-H}
Let $u\in\mathcal{S}_{N_0}$. Then
\begin{enumerate}
    \item[\rm{(i)}] if $a\geq 1$, then $H^1(B_1, |u|^a)$ can be defined as the completion of $C^\infty_c(\overline{B_1}\setminus Z(u))$ with respect to the norm \eqref{e:norm-ua};
    \item[\rm{(ii)}] if $a \in [0,1)$, then $H^1(B_1, |u|^a)$ can be defined as the completion of $C^\infty_c(\overline{B_1}\setminus S(u))$ with respect to the norm \eqref{e:norm-ua}.
\end{enumerate}
More generally, if $E\subset\R^d$ is a set with zero-Lebesgue measure and null $H^1(|u|^a)$-capacity, then the class $C^\infty_c(\overline{B_1}\setminus E)$ is dense in $H^1(B_1,|u|^a)$ with respect to the norm \eqref{e:norm-ua}.
\end{Proposition}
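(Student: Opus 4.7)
The claims (i) and (ii) are immediate instances of the general density statement at the end, applied to $E=Z(u)$ and $E=S(u)$ respectively, whose null $H^1(|u|^a)$-capacity in the corresponding range of $a$ has already been established in Lemma \ref{l:capacity}. It is therefore enough to prove the general assertion, which, since $C^\infty(\overline{B_1})$ is dense in $H^1(B_1,|u|^a)$ by Definition \ref{d:sobolev}, amounts to approximating any $\varphi\in C^\infty(\overline{B_1})$ in the weighted norm by elements of $C^\infty_c(\overline{B_1}\setminus E)$.

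The construction follows a two-step cut-and-regularize scheme. First, by null $H^1(|u|^a)$-capacity of $E$, arguing as in the proof of Lemma \ref{l:capacity}, one produces a sequence of Lipschitz cutoffs $\eta_k\colon \overline{B_1}\to[0,1]$ with $\eta_k\equiv 1$ on an open neighborhood $V_k$ of $E$ and $\|\eta_k\|_{H^1(B_1,|u|^a)}\to 0$. Setting $\Phi_k:=(1-\eta_k)\varphi$, the Leibniz rule together with the $C^1$-boundedness of $\varphi$ yields
\begin{equation*}
\|\varphi-\Phi_k\|_{H^1(B_1,|u|^a)}\le C\bigl(\|\varphi\|_{C^1(\overline{B_1})}\bigr)\,\|\eta_k\|_{H^1(B_1,|u|^a)}\longrightarrow 0,
\end{equation*}
with $\Phi_k\equiv 0$ on the open set $V_k\supset E$. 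Next, since $\Phi_k$ is only Lipschitz, we regularize by convolution with a standard mollifier $\rho_{\delta}$; because for $a\ge 0$ the weight $|u|^a$ is bounded in $B_1$ by $\|u\|_{L^\infty(B_1)}^a$, standard $H^1$-convergence of mollifiers combined with dominated convergence gives $\Phi_k*\rho_\delta\to\Phi_k$ in $H^1(B_1,|u|^a)$ as $\delta\to 0^+$. A diagonal argument in $k$ and $\delta$ then produces the desired smooth approximants.

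The main obstacle is to ensure that the support of $\Phi_k*\rho_{\delta_k}$ is disjoint from $E$, since $V_k$ is only known to be open around $E$ and could have non-uniform thickness. This is handled by compactness: in the cases of interest $E$ is relatively closed in $B_1$ (both $Z(u)$ and $S(u)$ are), so $E\cap\overline{B_{1-\tau}}$ is compact for every $\tau\in(0,1)$ and is contained in the open set $V_k$; a standard covering argument then produces $\delta_k>0$ such that the $\delta_k$-tubular neighborhood of $E\cap\overline{B_{1-\tau}}$ still lies in $V_k$, so that $\Phi_k*\rho_{\delta_k}$ vanishes in a neighborhood of $E\cap\overline{B_{1-\tau}}$. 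Letting $\tau\to 0$ and $k\to\infty$ along a suitable diagonal, combined if necessary with a further cutoff near $\partial B_1$ to keep the support compactly contained in $\overline{B_1}\setminus E$, produces the required sequence in $C^\infty_c(\overline{B_1}\setminus E)$.
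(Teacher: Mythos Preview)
Your proof is correct and follows the same cut-off scheme as the paper: multiply $\varphi\in C^\infty(\overline{B_1})$ by $1-\eta_k$, where $\eta_k$ is a capacitary cutoff for $E$, and check convergence in the weighted norm via the Leibniz rule and dominated convergence. The only cosmetic difference is that the paper takes the cutoffs $\phi_\eps$ to be already smooth (the mollification having been absorbed into the proof of Lemma~\ref{l:capacity}), so that $w(1-\phi_\eps)$ lies directly in $C^\infty_c(\overline{B_1}\setminus E)$ and your subsequent convolution-and-support discussion is not needed.
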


\begin{proof}
We stress that already in \cite[Remark 3.3]{TerTorVit1} (see also see Lemma 4.2 and Theorem 4.7 in \cite{Zhikov}) the authors remarked that if the weighted capacity of sets where the weight strongly degenerates (specifically $Z(u)$ if $a\geq 1$ and $S(u)$ if $a\in [0,1)$) is null, then it is possible to characterize $H^1(B_1,|u|^a)$ as the completion of a restricted class of smooth functions.\\
Indeed, in view of Lemma \ref{l:capacity}, for every $\eps>0$ there exists $\phi_\eps \in C^\infty_c(B_1)$ such that $0\leq \phi_\eps \leq 1, \phi_\eps \to 0$ a.e. in $B_1$, $\norm{|\nabla \phi_\eps|}{L^2(B_1,|u|^a)}\leq \eps$ and
$$
\mbox{if }a\geq 1,\mbox{ then }\,\phi_\eps\equiv 1\,\,\mbox{in }N_\eps(Z(u))\cap B_1;\qquad
\mbox{if }a\in [0,1),\mbox{ then }\,\phi_\eps\equiv 1\,\,\mbox{in }N_\eps(S(u))\cap B_1.
$$
Now, given $w \in C^\infty(\overline{B_1})$, the approximating sequence $w_\eps(x) := w(x)(1-\phi_\eps(x))$ strongly converges to $w$ with respect to the norm \eqref{e:norm-ua}. Indeed
\be\label{e:strong-convergence-HW}
\begin{aligned}
\int_{B_1}|u|^a (w_\eps - w)^2\,\mathrm{d}x
&\leq \int_{B_1 } |u|^a w^2 \phi_\eps^2 \,\mathrm{d}x \to 0^+,\\
\int_{B_1}|u|^a|\nabla (w_\eps- w)|^2\,\mathrm{d}x &\leq \int_{B_1 } |u|^a |\nabla w|^2 \phi_\eps^2 \,\mathrm{d}x + \int_{B_1 }|u|^{a} w^2|\nabla \phi_\eps|^2\,\mathrm{d}x\\
&\leq \int_{B_1}|u|^a |\nabla w|^2 \phi_\eps^2 \,\mathrm{d}x +  \norm{u}{L^\infty(B_1)}^a \norm{w}{L^\infty(B_1)}^2 \eps^2 \to 0^+,
\end{aligned}
\ee
where in the last inequality we used the definition of $\phi_\eps$ and the limit follows by dominated convergence. Ultimately, the result follows since $w_\eps$ is identically zero in a neighborhood of $Z(u)$ (resp. in a neighborhood of $S(u)$) in $B_1$ if $a\geq 1$ (resp. $a \in [0,1)$).
\end{proof}

We conclude this subsection by showing the validity of the $(H\equiv W)$-property for the weighted Sobolev spaces of Definition \ref{d:sobolev}; that is, the equivalence between the space of functions with finite well-defined norm and the completion of smooth functions with respect to the norm.
\begin{Proposition}[The $(H\equiv W)$-property]
Let $u\in\mathcal{S}_{N_0}, a>-a_{\mathcal{S}}$ and
\begin{enumerate}
    \item[\rm{(i)}] if $a\geq 1$, set  $
W^{1,2}(B_1,|u|^a) := \left\{w \in W^{1,1}_\loc(B_1\setminus Z(u))\colon \norm{w}{H^1(B_1,|u|^a)}<+\infty\right\};$\vspace{0.1cm}
\item[\rm{(ii)}] if $a \in [a_{\mathcal{S}},1)$, set
$W^{1,2}(B_1,|u|^a) := \left\{w \in W^{1,1}_\loc(B_1\setminus S(u))\colon \norm{w}{H^1(B_1,|u|^a)}<+\infty\right\};$\vspace{0.1cm}
\item[\rm{(iii)}] if $a\in (-a_{\mathcal{S}},a_{\mathcal{S}})$, set $
W^{1,2}(B_1,|u|^a) := \left\{w \in W^{1,1}_\loc(B_1)\colon \norm{w}{H^1(B_1,|u|^a)}<+\infty\right\}.$\vspace{0.05cm}
\end{enumerate}
Then, $W^{1,2}(B_1,|u|^a) \equiv H^1(B_1,|u|^a)$.
\end{Proposition}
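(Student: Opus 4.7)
The plan is to handle the three regimes in parallel, reducing each to a setting where a classical Meyers--Serrin type argument applies. The inclusion $H^1(B_1,|u|^a) \subseteq W^{1,2}(B_1,|u|^a)$ is almost immediate in every case: a Cauchy sequence $w_k \in C^\infty(\overline{B_1})$ converging in the weighted norm has $\nabla w_k$ converging in $L^2(|u|^a)$, hence (up to subsequence) the limit has a weak gradient on the appropriate complement of the degenerate set and finite weighted norm. The real task is the reverse inclusion $W^{1,2}(B_1,|u|^a) \subseteq H^1(B_1,|u|^a)$.

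For case (iii), where $a\in(-a_{\mathcal{S}},a_{\mathcal{S}})$, the weight $|u|^a$ is $\mathcal{A}_2$-Muckenhoupt on $B_{7/8}$ uniformly-in-$\mathcal{S}_{N_0}$ by Lemma~\ref{l:a-N0}(ii). The classical $(H\equiv W)$-theorem for $\mathcal{A}_2$-weights (as in Kilpel\"ainen's version of Meyers--Serrin, cf.\ Heinonen--Kilpel\"ainen--Martio) then gives the density of smooth functions directly, so no further structural input is needed.

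For cases (i) and (ii), the strategy is a three-step reduction: amplitude truncation, capacitary cutoff, and local mollification. Given $w\in W^{1,2}(B_1,|u|^a)$, I first replace $w$ by its truncation $T_M(w):=\max(-M,\min(w,M))$; since $|\nabla T_M(w)|\leq |\nabla w|$ a.e.\ on the set where $w$ is weakly differentiable, dominated convergence in the weighted norm gives $T_M(w)\to w$ in $W^{1,2}(B_1,|u|^a)$ as $M\to\infty$. For bounded $w$, I apply the cutoff $\phi_\eps$ from Lemma~\ref{l:capacity} (vanishing on a neighborhood of $Z(u)$ when $a\geq 1$, resp.\ of $S(u)$ when $a\in [a_{\mathcal{S}},1)$) and consider $w_\eps:=w(1-\phi_\eps)$. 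The same computation as in \eqref{e:strong-convergence-HW}, but with $\|w\|_{L^\infty(B_1)}$ replaced by the truncation level $M$, gives $w_\eps\to w$ in the weighted norm; here the boundedness of $w$ is crucial for absorbing the term involving $|\nabla\phi_\eps|^2$.

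The final step is to approximate each bounded $w_\eps$ (supported away from the bad set) by smooth functions. In case (i) this is straightforward, because on $\mathrm{supp}(w_\eps)$ the weight $|u|^a$ is continuous and bounded above and below on every compact subset of $B_1\setminus Z(u)$, so the weighted space coincides with the classical $W^{1,2}$ locally and standard mollification suffices. Case (ii) is the main obstacle, since after removing a neighborhood of $S(u)$ the weight $|u|^a$ still degenerates along the regular part $R(u)$ of the nodal set. To handle this I would cover $\mathrm{supp}(w_\eps)$ by finitely many balls either disjoint from $Z(u)$ (treated as above) or centered at regular nodal points, where the Taylor expansion \eqref{taylor} together with the $C^{1,1-}$-regularity of $R(u)$ yields local coordinates in which $|u(x)|$ is bi-Lipschitz comparable to $|x_n|$. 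On such a ball the weight is equivalent to $|x_n|^a$ with $a\in [a_{\mathcal{S}},1)\subset(-1,1)$, which is $\mathcal{A}_2$-Muckenhoupt, so the classical $(H\equiv W)$-result applies locally. A partition of unity subordinate to this cover, combined with the case~(i)/(iii) arguments on the individual pieces, produces a global smooth approximation converging to $w_\eps$ in $H^1(B_1,|u|^a)$; a diagonal extraction with $M\to\infty$ and $\eps\to 0$ completes the proof.
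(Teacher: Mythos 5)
Your proposal is correct and, in case (ii), takes a genuinely different route from the paper's. You also add a step (amplitude truncation) that the paper leaves implicit but that is needed for rigor: the paper invokes the computation \eqref{e:strong-convergence-HW} to show $w_\eps:=w(1-\phi_\eps)\to w$ in the weighted norm for a general $w\in W^{1,2}(B_1,|u|^a)$, but that computation controls the term $\int|u|^a w^2|\nabla\phi_\eps|^2$ by $\|u\|_{L^\infty}^a\|w\|_{L^\infty}^2\eps^2$, which requires $w\in L^\infty$. Your reduction to $T_M(w)$ via dominated convergence fixes this cleanly.

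Where you truly diverge from the paper is the final approximation step in case (ii), $a\in[a_{\mathcal{S}},1)$. The paper's device is to replace the weight by
$$
\rho_\eps(x)=\begin{cases}\|u\|_{L^\infty(B_1)}^a & \text{in }N_\eps(S(u))\cap B_1,\\ |u(x)|^a & \text{in }B_1\setminus N_\eps(S(u)),\end{cases}
$$
which is \emph{globally} $\mathcal{A}_2$-Muckenhoupt in $B_1$; since $w_\eps\equiv 0$ on $N_\eps(S(u))$, the norms $\|w_\eps-\tilde w_\eps\|_{H^1(|u|^a)}$ and $\|w_\eps-\tilde w_\eps\|_{H^1(\rho_\eps)}$ are comparable, and one applies the classical $(H\equiv W)$ theorem for $\mathcal{A}_2$-weights once, globally. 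Your alternative is a local cover of $\mathrm{supp}(w_\eps)$ by balls that are either away from $Z(u)$ or small enough around a point of $R(u)$ that, after flattening, $|u|^a$ is bi-Lipschitz comparable to $|x_n|^a\in\mathcal{A}_2$, followed by a partition of unity to glue the local smooth approximants. Both are sound; the paper's modified-weight trick is shorter and sidesteps the partition-of-unity bookkeeping (support control, commutator terms $\nabla\psi_i\cdot$), while your local argument is more transparent about where the $\mathcal{A}_2$ structure actually comes from and would generalize to settings where a global weight modification is awkward.

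One point to make fully rigorous in your write-up: in the patching step, you need the local smooth approximants of $\psi_i w_\eps$ to keep support in (a slightly shrunk copy of) $B_i$ so that their sum is a genuine global competitor; this is standard but should be said, since otherwise the sum need not lie in $C^\infty(\overline{B_1})$. Also, your opening observation that $H^1\subseteq W^{1,2}$ is essentially the paper's Step~1 (completeness of $W^{1,2}$), and in case (ii) one should record, as the paper does, the Cauchy--Schwarz bound with $\int_K|u|^{-a}<\infty$ on compacts $K\subset\subset B_1\setminus S(u)$ to place the limit in $W^{1,1}_\loc(B_1\setminus S(u))$.
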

\begin{proof}
In light of Lemma \ref{l:a-N0}, in the case (iii), the function $|u|^a$ is an $\mathcal{A}_2$-Muckenhoupt weight. Therefore, in this range the validity of the property $(H=W)$ follows from classical results.\\
It remains to address the case $a\geq a_{\mathcal{S}}$. We start by stressing that for $a \geq a_{\mathcal{S}}$ the Sobolev space $W^{1,2}(B_1,|u|^a)$ is well-defined. Indeed, given $w \in W^{1,2}(B_1,|u|^a)$, since in both the cases (i)-(ii) the removed set $E=Z(u)$ for $a\geq 1$ (resp. $E=S(u)$ for $a \in [0,1)$) is a closed set of null measure, the weak derivatives are those functions $\partial_{x_i} w \in L^1_\loc(B_1\setminus E)$ satisfying
$$
\int_{B_1}|u|^a w\partial_{x_i} \varphi \,\mathrm{d}x =
-\int_{B_1} |u|^a \varphi \partial_{x_i} w\,\mathrm{d}x,\qquad \mbox{for every }\varphi \in C^\infty_c(\overline{B_1}\setminus E),
$$
where we stress again that the removed set $E$ has null Lebesgue measure and null weighted capacity. By Definition \ref{d:sobolev}, since
$$
H^1(B_1,|u|^a) \subseteq \overline{W^{1,2}(B_1,|u|^a)}^{\norm{\cdot}{H^1(|u|^a)}}
$$
the result follows once we prove that $W^{1,2}(B_1,|u|^a)$ is complete (i.e. a Banach space) with respect to the norm \eqref{e:norm-ua} and every function in $W^{1,2}(B_1,|u|^a)$ can be approximated by regular functions in $H^1(B_1,|u|^a)$ with respect to the norm \eqref{e:norm-ua}; that is,
$$W^{1,2}(B_1,|u|^a)=\overline{W^{1,2}(B_1,|u|^a)}^{\norm{\cdot}{H^1(|u|^a)}}$$
and
$$W^{1,2}(B_1,|u|^a)\subseteq H^{1}(B_1,|u|^a).$$

Step 1: $W^{1,2}(B_1,|u|^a)$ is a Banach space. Since $L^2(B_1,|u|^a)$ is a Banach space, we first get that every limit of a Cauchy sequence in $W^{1,2}(B_1,|u|^a)$ has finite norm \eqref{e:norm-ua}. On the other hand, we have two possibilities according to the value of $a \geq a_{\mathcal{S}}$:
\begin{enumerate}
    \item[\rm{(i)}] if $a\geq 1$, on every compact set $K\subset \subset B_1\setminus Z(u)$ the weight $|u|^a$ is bounded from below and above, thus every limit of a Cauchy sequence with finite norm automatically belongs to $W^{1,1}_\loc(B_1\setminus Z(u))$;
    \item[\rm{(ii)}] if $a \in [a_{\mathcal{S}},1)$, on every compact set $K\subset\subset B_1\setminus S(u)$ the weight $|u|^a$ is $\mathcal{A}_2$-Muckenhoupt and so, by Cauchy-Schwarz inequality
    $$
    \int_{K}|w| + |\nabla w|\,\mathrm{d}x \leq \left(\int_K |u|^{-a}\,\mathrm{d}x\right)^{1/2}\left(\int_{K}|u|^a (|w|^2 + |\nabla w|^2)\,\mathrm{d}x\right)^{1/2}<+\infty.
    $$
    Hence, every limit of a Cauchy sequence with finite norm belongs to $W^{1,1}_\loc(B_1\setminus S(u))$.\\
\end{enumerate}

Step 2: $W^{1,2}(B_1,|u|^a)\subseteq H^1(B_1,|u|^a)$. First, by Lemma \ref{l:capacity}, for every $\eps>0$ there exists $\phi_\eps \in C^\infty_c(B_1)$ such that $0\leq \phi_\eps \leq 1, \phi_\eps \to 0$ a.e. in $B_1$, $\norm{|\nabla \phi_\eps|}{L^2(B_1,|u|^a)}\leq \eps$ and
$$
\phi_\eps\equiv 1\,\,\mbox{in }N_\eps(Z(u))\cap B_1 \ \mbox{if }a\geq 1;\qquad
\phi_\eps\equiv 1\,\,\mbox{in }N_\eps(S(u))\cap B_1 \ \mbox{if }a\geq 0.
$$
Now, given $w \in W^{1,2}(B_1,|u|^a)$, the approximating sequence $w_\eps(x) := w(x)(1-\phi_\eps(x))$ strongly converges to $w$ with respect to the norm \eqref{e:norm-ua} (see \eqref{e:strong-convergence-HW}). We conclude by distinguishing two cases:
\begin{enumerate}
    \item[\rm{(i)}] if $a\geq 1$, since $|u|^a$ is bounded from below and above in $B_1\setminus N_\eps(Z(u))$, we deduce that $w_\eps$ belongs to $H^1(B_1)$ and then it is approximated by functions $\tilde{w}_\eps \in C^\infty(\overline{B_1})$ with respect to the $H^1$-norm. Ultimately, by \eqref{e:confronto-misure}, as $\eps \to 0^+$ it implies
    $$
    \norm{w-\tilde{w}_\eps}{H^1(B_1,|u|^a)}^2\leq \norm{w-w_\eps}{H^1(B_1,|u|^a)}^2+
    \norm{u}{L^\infty(B_1)}^a \norm{w_\eps-\tilde{w}_\eps}{H^1(B_1)}^2\to 0;
    $$
    \item[\rm{(ii)}] if $a \in [a_{\mathcal{S}},1)$, for a fixed $\eps>0$ consider the $\mathcal{A}_2$-Muckenhoupt weight
    $$
    \rho_\eps(x) := \begin{cases}
    \norm{u}{L^\infty(B_1)}^a &\mbox{in }N_\eps(S(u))\cap B_1\\
    |u(x)|^a &\mbox{in }B_1\setminus N_\eps(S(u)).
    \end{cases}
    $$
    Then, $w_\eps\in H^1(B_1,\rho_\eps dx)$ and so, in light of the $(H\equiv W)$-property for weighted Sobolev spaces involving $\mathcal A_2$-weights, there exists  $\tilde{w}_\eps \in C^\infty(\overline{B_1})$ such that
    $$
    \norm{w_\eps -\tilde{w}_\eps}{H^1(B_1,\rho_\eps dx)}^2 \leq \eps^2,
    $$
    where the weighted norm is the natural one associated to $\rho_\eps$. Finally, by combining the previous estimates, as $\eps\to 0^+$ we have
    $$
    \norm{w-\tilde{w}_\eps}{H^1(B_1,|u|^a)}^2\leq \norm{w-w_\eps}{H^1(B_1,|u|^a)}^2+\norm{w_\eps-\tilde{w}_\eps}{H^1(B_1,\rho)}^2\to 0.
    $$
    \end{enumerate}
\end{proof}
\begin{remark}
In the case $a \in (-1,-a_{\mathcal{S}})$, the presence of singular points strongly affects the structure of the Sobolev space $H^1(B_1, |u|^a)$. Indeed, by miming the program developed for the case of $u$ linear and $a\leq -1$ (see \cite{SirTerVit2}), we can prove that the finiteness of the norm \eqref{e:norm-ua} implies that functions in the energy space - and possibly their higher order derivatives, depending on the vanishing order of $u$ at the singularity - must vanish on the singularity.
\end{remark}

\subsection{The notion of solution for weighted PDEs} We introduce the class of solutions considered through the section.
\begin{Definition}\label{definition.energy.a}
Let $A \in \mathcal{A}$, $u \in \mathcal{S}_{N_0}$ and $a > -a_{\mathcal{S}}$ (with $a_{\mathcal{S}}$ as in \eqref{a_S}). Then, we distinguish two cases:
\begin{enumerate}
    \item[\rm{(i)}] if $a\geq 1$, then we say that $w\in H^1(B_1,|u|^a)$ is a solution to \eqref{eqw} in $B_1$ if
\begin{equation*}
\int_{B_1}|u|^a A\nabla w\cdot\nabla\phi\,\mathrm{d}x=0,\quad \text{for every } \phi\in C^\infty_c(B_1)\,;
\end{equation*}
\item[\rm{(ii)}] if $a \in (-a_{\mathcal{S}},1)$, then we say that $w\in H^1(B_1,|u|^a)$ is a solution to \eqref{eqw} in $B_1$ satisfying
\be\label{e:equation-exponent-a-neumann}
|u|^a  A \nabla w \cdot \nabla u = 0\quad\mbox{on }R(u)\cap B_1,
\ee
if, on every connected component $\O_u$ of $\{u\neq 0\}$, we have
$$
\int_{\O_u \cap B_1}|u|^a A\nabla w\cdot\nabla\phi\,\mathrm{d}x=0,\quad \text{for every } \phi\in C^\infty_c(B_1).
$$
\end{enumerate}
Naturally, it is equivalent to state the weak formulations with respect to the dense classes of smooth functions defined in  Proposition \ref{p:alternative-H}.
\end{Definition}

In the second case of Definition \ref{definition.energy.a}, we are restricting the attention to those problems in which the conormal boundary condition \eqref{e:equation-exponent-a-neumann} holds true. Indeed, it is possible to construct  solutions to \eqref{eqw} satisfying a homogeneous Dirichlet condition on $R(u)\cap B_1$ that are not differentiable close to the regular set $R(u)$ (see \cite{SirTerVit2} for the case $u$ linear).

\begin{remark}
    We would like to remark that in the superdegenerate case $a\geq1$, the two formulations (i), (ii) in the previous definition are equivalent. Indeed, the test functions for the weak formulation can be taken in $C^\infty_c(B_1\setminus Z(u))$, and this fact implies that the equation is satisfied in particular on any nodal domain of $u$. However, in this case, since the $H^1(|u|^a)$-capacity of the nodal set $Z(u)$ is zero, the boundary condition in \eqref{e:equation-exponent-a-neumann} does not have sense in general but is somehow formally always satisfied.
\end{remark}

\subsection{The case \texorpdfstring{$a=2$}{Lg} and the ratio of solutions of PDEs sharing nodal sets}
We present some specific results for the case $a=2$. Naturally, being $|u|^2 \in L^\infty(B_1)$, the condition on the local integrability of the weight is satisfied. As highlighted in \cite[Proposition 3.5]{TerTorVit1}, the ratio of solutions of the equation \eqref{equv} satisfies \eqref{eqw} in the sense of Definition \ref{definition.energy.a}.

In the following result, we observe that there is, indeed, an equivalence between the two problems: it is possible to demonstrate that the product of a solution $w$ of \eqref{eqw} and its weight $u$ is an $A$-harmonic function (see \eqref{equv}).

\begin{Proposition}[Inverse relation for the ratio]\label{p:inverse}
Let $u\in H^1(B_1)$ and $w\in H^1(B_1,u^2)$ be weak solutions respectively to \eqref{equv} and \eqref{eqw} in $B_1$. Assume that $w \in L^\infty(B_1) \cap C(B_1\setminus S(u))$, then the function $v=uw \in H^1(B_1)$ solves \eqref{equv} in $B_1$ with $Z(u)\subseteq Z(v)$.
\end{Proposition}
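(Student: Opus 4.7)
The plan is to establish three ingredients in order: (a) the Sobolev regularity $v\in H^1(B_1)$ together with the product rule for $\nabla v$; (b) a bilinear identity obtained by combining the $u$- and $w$-equations; (c) a removability step that converts this identity into the full weak formulation for $v$.

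For (a), since $w\in L^\infty$ and $u\in L^\infty\cap H^1$, the formal expression $w\nabla u+u\nabla w$ is a well-defined element of $L^2(B_1;\R^n)$, using $\int u^2|\nabla w|^2<\infty$. To verify that it is the weak gradient of $v=uw$, I would approximate $w$ by smooth $w_k$ in the norm of $H^1(B_1,u^2)$ (available by Definition \ref{d:sobolev} since $a=2\ge 1$), truncate to keep $\|w_k\|_\infty\le\|w\|_\infty+1$, and pass to the limit in the classical product rule $\nabla(uw_k)=w_k\nabla u+u\nabla w_k$. The inclusion $Z(u)\subseteq Z(v)$ is then immediate, since $|v|\le\|w\|_\infty|u|$ forces $v(x)=0$ wherever $u(x)=0$.

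For (b), on the open set $\{u\ne 0\}$ the classical chain rule gives $u^2 A\nabla w = u^2 A\nabla(v/u) = uA\nabla v - vA\nabla u$; both sides are $L^2$ vector fields on $B_1$ and $|Z(u)|=0$, so the identity holds a.e.\ on $B_1$. Testing the $w$-equation against $\xi\in C^\infty_c(B_1)$ (admissible by Definition \ref{definition.energy.a}(i)) yields $\int uA\nabla v\cdot\nabla\xi\,dx = \int vA\nabla u\cdot\nabla\xi\,dx$. Applying $uA\nabla v\cdot\nabla\xi = A\nabla v\cdot\nabla(u\xi)-\xi A\nabla v\cdot\nabla u$ on the left, and on the right testing the $u$-equation against $v\xi\in H^1_0(B_1)$ (valid since $v\in H^1$ and $\xi\in C^\infty_c$), and finally invoking the symmetry of $A$ to cancel the two resulting cross-terms, I obtain
\begin{equation*}
\int_{B_1} A\nabla v\cdot\nabla(u\xi)\,dx = 0, \qquad \forall\,\xi\in C^\infty_c(B_1).
\end{equation*}

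The main obstacle is step (c): upgrading this identity, which only tests $v$ against gradients of the form $\nabla(u\xi)$, to $\int A\nabla v\cdot\nabla\phi\,dx=0$ for every $\phi\in C^\infty_c(B_1)$. Equivalently, the distribution $T:=\mathrm{div}(A\nabla v)\in H^{-1}(B_1)$ satisfies $uT=0$ and is therefore supported on $Z(u)$, and the task is to show $T\equiv 0$. My plan is to decompose $Z(u)=R(u)\cup S(u)$. Since $\dim_\mathcal{H}(S(u))\le n-2$, $S(u)$ has null $H^1$-capacity and supports no $H^{-1}$-mass. On the $C^{1,1-}$-regular part $R(u)$, the singular part of $T$ reduces to the jump $[A\nabla v\cdot\nu]\, d\mathcal{H}^{n-1}$ along $R(u)$. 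Restricting the identity of (b) to $\xi$ supported in a set where $u$ is bounded away from zero shows that $v$ is weakly $A$-harmonic on each nodal domain $\Omega_j$ of $u$; boundary Schauder estimates on $\Omega_j$ (whose local boundary is a $C^{1,1-}$-graph and where $v\equiv 0$ by step (a)) yield continuous one-sided traces of $\nabla v$ up to $R(u)\setminus S(u)$. Combining the product rule $\nabla v=w\nabla u+u\nabla w$ with the continuity of $w$ on $B_1\setminus S(u)$ (hypothesis) and the $C^{1,\alpha}$-regularity of $u$, the two one-sided traces of $A\nabla v\cdot\nu$ coincide on $R(u)\setminus S(u)$; hence the jump vanishes and $T\equiv 0$. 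The delicate point is justifying that the one-sided limits of $u\nabla w$ at $R(u)$ agree, which follows indirectly from the Schauder step (which gives continuous one-sided traces of $\nabla v$) combined with the already continuous boundary value of $w\nabla u$.
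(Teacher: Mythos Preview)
Your steps (a) and (b) are fine and mirror the paper's approach closely; the paper also starts from $v\in H^1(B_1)$, establishes $A$-harmonicity of $v$ on $B_1\setminus Z(u)$ (via the test function $\phi/u$, which is equivalent to your identity $\int A\nabla v\cdot\nabla(u\xi)=0$), and then removes $Z(u)=R(u)\cup S(u)$ in two steps, the $S(u)$ part by the same capacity argument you sketch.

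The gap is in your treatment of $R(u)$. Your argument that the two one-sided traces of $A\nabla v\cdot\nu$ coincide is circular: Schauder gives only the \emph{existence} of $(\nabla v)^{\pm}$, and since $w\nabla u$ is continuous across $R(u)$ you obtain
\[
(u\nabla w)^{+}-(u\nabla w)^{-}=(\nabla v)^{+}-(\nabla v)^{-},
\]
which is exactly the quantity you are trying to show vanishes. Nothing in (b) helps here: the identity $\langle T,u\xi\rangle=0$ restricts to $\int_{R(u)}u\,\xi\,[A\nabla v\cdot\nu]\,d\mathcal H^{n-1}=0$, which is vacuous because $u\equiv 0$ on $R(u)$. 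What is genuinely needed is an \emph{independent} control on $uA\nabla w\cdot\nu$ at $R(u)$, and this does not follow from $w\in H^1(B_1,u^2)\cap L^\infty\cap C$ alone (e.g.\ $u^2|\nabla w|^2$ integrable is compatible with $u|\nabla w|$ staying bounded away from zero near $R(u)$).

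The paper closes this gap by invoking \cite[Theorem 1.3]{TerTorVit1}, which provides the conormal boundary condition $A\nabla w\cdot\nu=0$ on $R(u)$ as a regularity property of solutions to the degenerate equation \eqref{eqw}. With this in hand, $uA\nabla w\cdot\nu\to 0$ from both sides and the jump vanishes; the $wA\nabla u\cdot\nu$ contributions from $\Omega_+$ and $\Omega_-$ then cancel by continuity of $w$ and $\nabla u$. You should either cite that result or reproduce an argument that yields regularity of $\nabla w$ (or at least of $uA\nabla w\cdot\nu$) up to $R(u)$.
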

\begin{proof}
It is immediate to notice that $v\in H^1(B_1)$ and $Z(u)\subseteq Z(v)$, indeed
$$
\int_{B_1} v^2 + |\nabla v|^2 \,\mathrm{d}x \leq 2\left(\int_{B_1} u^2 w^2 + u^2|\nabla w|^2 + w^2|\nabla u|^2 \,\mathrm{d}x\right),
$$
where the right hand side is finite since $w\in H^1(B_1,u^2)\cap L^\infty(B_1)$ and $u \in H^1(B_1)$. Moreover, by assumption we already know that $v \in C(B_1\setminus Z(u))$. Now, since for every $\phi \in C^\infty_c(B_1\setminus Z(u))$ it holds
$$
\int_{B_1} A\nabla v \cdot \nabla \phi\,\mathrm{d}x = \int_{B_1} u^2 A\nabla w \cdot \nabla \left(\frac{\phi}{u}\right)\,\mathrm{d}x=0,
$$
we have that $\mathrm{div}\left(A\nabla v\right)=0$ in $B_1\setminus Z(u)$. Clearly, the previous equality is obtained by applying an approximation argument on $\phi/u \in C^1(B_1\setminus Z(u))$ (see for example \cite[Remark 3.2]{TerTorVit1}). In the remaining part of the proof we first extend the validity of the equation across $R(u)$ and ultimately across $S(u)$ by exploiting a capacity argument.\\
Let $x_0 \in R(u)$ and $r_0>0$ be such that $B_{r_0}(x_0) \cap Z(u) = B_{r_0}(x_0)\cap R(u)$ and $B_{r_0}(x_0)\setminus R(u) = \Omega_+\cup \Omega_-$ where $\partial\Omega_{\pm}\cap B_{r_0}(x_0)$ is the graph of a $C^{1,\alpha}$-function and $v$ is a weak-solution of $\mathrm{div}\left(A\nabla v\right) = 0$ in $\Omega_+\cup \Omega_-$. Now, set $N_\eps(R(u))$ to be the $\eps$-tubular neighborhood of $R(u)$, then for every $\phi \in C^\infty_c(B_{r_0}(x_0))$
$$
\int_{B_{r_0}(x_0)\setminus N_\eps(R(u))}A \nabla v \cdot \nabla \phi\,\mathrm{d}x =
\int_{\Omega_+\cap \partial N_\eps(R(u))} \phi A \nabla v \cdot \nu_{+,\eps}\,\mathrm{d}\sigma + \int_{\Omega_-\cap\partial N_\eps(R(u))}\phi A\nabla v \cdot \nu_{-,\eps}\,\mathrm{d}\sigma
$$
where, up to a rotation, we have $\nu_{+,\eps} = \nabla u/|\nabla u| + o(1)$ and $\nu_{-,\eps} = -\nabla u/|\nabla u| + o(1)$. Since $w$ is a weak solution to $\eqref{eqw}$, we get that
$$
\left|\int_{\Omega_\pm\cap \partial N_\eps(R(u))} \phi A \nabla v \cdot \nu_{\pm,\eps} - w A \nabla u \cdot \nu_{\pm,\eps} \,\mathrm{d}\sigma\right| \leq
\int_{\Omega_\pm\cap\partial N_\eps(R(u))} \phi u A \nabla w \cdot \nu_{\pm,\eps}\,\mathrm{d}\sigma
$$
Thus, in view of \cite[Theorem 1.3]{TerTorVit1}, since $A\nabla w \cdot \nu = 0$ on $R(u)\cap B_1$ we get
$$
\int_{B_{r_0}(x_0)\setminus N_\eps(R(u))}A \nabla v \cdot \nabla \phi \,\mathrm{d}x= o(1)
$$
as $\eps \to 0^+$, which ultimately implies that $\mathrm{div}\left(A\nabla v\right)=0$ in $B_{r_0}(x_0)$ and $v \in C^{1,\alpha}(B_{r_0}(x_0))$.
Finally, by \cite[Theorem 2.1]{Han} we know that $\mathrm{Cap}(S(u)\cap B_1) = 0$ and, by definition of $H^1$-capacity, there exists a sequence of cut-off functions $\eta_k \in C^\infty_c(B_1)$ such that $0\leq \eta_k\leq 1$, $\eta_k \to 0$ a.e. in $B_1$ and
$$
\eta_k\equiv 1 \mbox{ in }N_{1/k}(S(u))\cap B_1,\quad\norm{\nabla \eta_k}{L^2(B_1)}\to 0.
$$
Hence, given $\phi \in C^\infty_c(B_1)$, by testing the equation satisfied by $v$ in $B_1\setminus S(u)$ with $(1-\eta_k)\phi \in C^\infty_c(B_1\setminus S(u))$ we get
$$
\int_{B_1} (1-\eta_k) A\nabla v \cdot \nabla \phi \,\mathrm{d}x\leq \Lambda\norm{\phi}{L^\infty(B_1)}\norm{\nabla v}{L^2(B_1)}\norm{\nabla \eta_k}{L^2(B_1)}
$$
which leads to definition of weak solution in $B_1$, as $k\to +\infty$.
\end{proof}

\subsection{Sobolev inequalities and local boundedness of solutions}
In this section we provide Sobolev inequalities and local boundedness of solutions to \eqref{eqw} with constants which are uniform in the class $\mathcal S_{N_0}$.

\begin{Proposition}[Sobolev embedding]\label{Sobolev}
Given $u \in \mathcal{S}_{N_0}$ and $a\in (-a_{\mathcal{S}},a_{\mathcal{S}})\cup(1,+\infty)$, there exists $C,\varepsilon>0$, depending only on the class $\mathcal{S}_{N_0}$ and on $a$, such that the following inequality holds: for every $w \in H^1(B_1,|u|^a)$ and for any $0<r\leq\frac{7}{8}$, we have
\begin{equation}\label{sobolev.embed}
\left(\int_{B_r} |u|^a|w|^{2^*_{\mathcal{S}}(a)}\,\mathrm{d}x\right)^{2/2^*_{\mathcal{S}}(a)}\leq C \int_{B_{r}} |u|^a|\nabla w|^2\,\mathrm{d}x,
\end{equation}
where
$$
2^*_{\mathcal{S}}(a)= 2+ \eps.
$$
\end{Proposition}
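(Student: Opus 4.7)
The exponent ranges in the statement correspond to two fundamentally different functional regimes, and I would handle them separately. For $a \in (-a_{\mathcal{S}}, a_{\mathcal{S}})$, Lemma \ref{l:a-N0}(ii) already provides that $|u|^a$ is an $\mathcal{A}_2$-Muckenhoupt weight on $B_{7/8}$ with $\mathcal{A}_2$-constant controlled by the class $\mathcal{S}_{N_0}$. The desired inequality then follows from the classical Fabes-Kenig-Serapioni weighted Sobolev-Poincar\'e inequality for $\mathcal{A}_2$ weights: there is a reverse-H\"older gain $\kappa > 1$, depending only on the $\mathcal{A}_2$-constant, such that
$$
\left(\int_{B_r} |u|^a |w|^{2\kappa} \, dx\right)^{1/\kappa} \leq C \int_{B_r} |u|^a |\nabla w|^2 \, dx.
$$
Setting $2^*_{\mathcal{S}}(a) = 2\kappa = 2 + \varepsilon$ yields \eqref{sobolev.embed}, with $C$ and $\varepsilon$ uniform in $\mathcal{S}_{N_0}$.

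The super-degenerate range $a > 1$ cannot be handled via Muckenhoupt theory since $|u|^{-a}$ fails to be locally integrable. Here my plan is a contradiction-compactness argument along the class. Assume there is no $\varepsilon > 0$ and no uniform constant $C$: one can build sequences $u_k \in \mathcal{S}_{N_0}$, radii $r_k \in (0, 7/8]$, and $w_k \in H^1(B_{r_k}, |u_k|^a)$ with unit $L^{2+1/k}(|u_k|^a)$-norm but weighted Dirichlet energy tending to zero. By Proposition \ref{p:alternative-H}(i), I may assume each $w_k$ is smooth and supported in $\overline{B_{r_k}} \setminus Z(u_k)$. Proposition \ref{p:blow-up} then produces, up to subsequences and a rescaling if needed, a nontrivial limit $u_\infty$ with $Z(u_k) \to Z(u_\infty)$ in Hausdorff distance; the lower bound of Lemma \ref{lemma.hay}, combined with $C^{1,\alpha}_{\loc}$-convergence, implies $|u_k| \geq c_K > 0$ uniformly on compact subsets $K \subset \{u_\infty \neq 0\}$. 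The energy decay therefore forces $\nabla w_k \to 0$ in $L^2_{\loc}(\{u_\infty \neq 0\})$, so $w_k$ converges to a function $w_\infty$ that is locally constant on each connected component of $\{u_\infty \neq 0\}$.

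The crux of the argument is to promote this local-constancy to a global contradiction. The null $H^1(|u_\infty|^a)$-capacity of $Z(u_\infty)$ from Lemma \ref{l:capacity}(i), together with the uniform null capacity of $Z(u_k)$ in $H^1(|u_k|^a)$, should force the step values of $w_\infty$ across different nodal components to coincide, so that $w_\infty$ is a true constant. A nonzero constant has zero weighted Dirichlet energy and finite weighted $L^{2+\varepsilon}$-norm; a proper normalization then contradicts the unit-norm hypothesis on $w_k$. The main obstacle is precisely this last gluing step: it hinges on a careful passage to the limit of the weighted capacities as the nodal sets $Z(u_k)$ themselves move, and on the uniform $\mathcal{H}^{n-1}$-bounds on $Z(u_k) \cap B_{1/2}$ recalled earlier (Cheeger-Naber-Valtorta), which prevent the nodal set from collapsing in a way that would permit spurious step-function limits. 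The small $\varepsilon$ produced by this scheme is implicit, in contrast to the explicit $\kappa = n/(n-2)$ available in the Muckenhoupt regime.
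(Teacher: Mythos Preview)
Your treatment of the range $a\in(-a_{\mathcal S},a_{\mathcal S})$ is correct and essentially matches the paper: both rest on the uniform $\mathcal A_2$ property from Lemma \ref{l:a-N0}(ii), the paper phrasing the conclusion through Haj\l asz's metric-measure Sobolev theory after establishing the lower volume bound $\mu(B_r(x_0))\geq Cr^{n+a^+\overline N_0}\mu(B_1)$ via Lemma \ref{lemma.hay}.

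For $a>1$ your compactness scheme has a genuine gap, and it is not merely the acknowledged technical difficulty of the ``gluing step'': the capacity information runs the wrong way. Precisely because $Z(u_\infty)$ has null $H^1(|u_\infty|^a)$-capacity when $a>1$ (Lemma \ref{l:capacity}(i) and Proposition \ref{p:alternative-H}(i)), a function taking \emph{distinct} constant values on different components of $\{u_\infty\neq0\}$ \emph{does} belong to $H^1(B_r,|u_\infty|^a)$ with zero weighted Dirichlet energy. So nothing forces your locally-constant limit $w_\infty$ to be a single constant; and even if it were, a nonzero constant with zero gradient and unit weighted $L^{2+\eps}$-norm is not a contradiction at all---inequality \eqref{sobolev.embed} can only hold for functions vanishing on $\partial B_r$ (this is how it is used, on $\eta w^{(1+\beta)/2}$, in Proposition \ref{prop.bound}), a condition your argument never imposes. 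There are further soft spots (no compactness for $(w_k)$ is actually established; $r_k\to0$ forces a rescaling outside the class), but the capacity point already breaks the scheme.

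The paper's argument for $a>1$ is direct and avoids compactness. Testing $\mathrm{div}(A\nabla u)=0$ against $|u|^{a-1}\phi^2$ (here $a>1$ is essential) yields the Hardy inequality
\[
\left(\frac{a-1}{2}\frac{\lambda}{\Lambda}\right)^2\int_{B_r}|u|^{a}\frac{|\nabla u|^2}{u^2}\,w^2\,dx\leq\int_{B_r}|u|^a|\nabla w|^2\,dx,
\]
from which $|u|^{a/2}w\in H^1_0(B_r)$ with $\int_{B_r}|\nabla(|u|^{a/2}w)|^2\,dx\lesssim\int_{B_r}|u|^a|\nabla w|^2\,dx$. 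One then applies the \emph{unweighted} Sobolev inequality to $|u|^{a/2}w$ and, via H\"older together with the uniform integrability of a small negative power of $|u|$ from Lemma \ref{l:a-N0}(i), obtains \eqref{sobolev.embed} with an explicit $2^*_{\mathcal S}(a)\in(2,2^*)$.
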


\begin{proof}
We split the proof into two cases: $a\in (-a_{\mathcal{S}},a_{\mathcal{S}})$ and $a>1$.\\

Case 1: $a\in (-a_{\mathcal{S}},a_{\mathcal{S}})$. First, notice that Lemma \ref{l:a-N0} (ii) implies that $|u|^a$ is $\mathcal{A}_2$-Muckenhoupt in $B_{7/8}$. Then, it is possible to exploit \cite{Haj}, which provides weighted Sobolev embeddings with uniform constant in the class $\mathcal S_{N_0}$. The basic requirement for Sobolev inequalities for weighted Sobolev spaces involving general measures is a local growth condition on the measure of balls. Indeed, by following the notations of \cite{AlGoHaj, Haj,HajMEM}, the natural measure associated to \eqref{e:norm-ua} is defined as $d\mu = |u(x)|^a dx$. It is worth noting that $d\mu$ and the Lebesgue measure are reciprocally absolutely continuous. Indeed, on one hand, the local integrability of the weight $|u|^a$ implies that $d\mu$ is absolutely continuous with respect to the Lebesgue measure, while on the other hand, the vice versa follows from the fact that the nodal set $Z(u)$ has zero Lebesgue measure.

On the other hand, by rephrasing Lemma \ref{lemma.hay}, we get that $\mu$ is $(n+a^+ \overline N_0)$-regular in the sense of \cite{Haj}, and so
\begin{equation}\label{measure.regular}
\mu(B_r(x_0)) \geq C r^{n+a^+ \overline N_0} \mu (B_1),
\end{equation}
for every $x_0 \in B_{3/4}$ and $r\leq 7/8-|x_0|$
, where $C$ depends only on $\lambda,\Lambda,L$ and $N_0$ (see also \eqref{overlineN0} for the precise definition of $\overline{N}_0)$.

Then, the result follows by applying \cite[Theorem 5.1]{HajMEM} to the metric measure Sobolev space associated with the measure $d\mu = |u(x)|^a dx$ (see also \cite[Theorem 4]{AlGoHaj}). Moreover, in the present case, the expression of the critical exponent of the Sobolev embedding, i.e.
$$
2^*_{\mathcal{S}}(a)=\frac{2(n+a^+\overline N_0)}{n+a^+\overline N_0-2},
$$
follows by \eqref{measure.regular} and the dependence of the constant in \eqref{sobolev.embed} coincides with the one in \eqref{measure.regular}.\\

Case 2: $a>1$. In this setting, the strong degeneracy of the weight makes it possible to derive the Sobolev inequality \eqref{sobolev.embed} as a consequence of the classical case corresponding to $a=0$. 
Notice that this range cover the peculiar case $a=2$, which is our main interest here.

First, given $w \in H^1(B_1,|u|^a)$ and $r \in (0,1)$, by Lemma \ref{l:capacity} (i) and Proposition \ref{p:alternative-H} (i), we know that there exists $\phi_\eps \in C^\infty_c(B_r\setminus Z(u))$ approximating $w$  with respect to the norm \eqref{e:norm-ua}. Therefore,
by testing the equation $\mathrm{div}(A\nabla u)=0$ with the function $|u|^{a-1}\phi^2_\eps$ and integrating by parts in $B_r$, we obtain
$$
 \left(\frac{a-1}{2}\frac{\lambda}{\Lambda}\right)^2\int_{B_r}|u|^a\frac{|\nabla u|^2}{u^2}\phi_\eps^2\,\mathrm{d}x\leq\int_{B_r}|u|^a|\nabla \phi_\eps|^2\,\mathrm{d}x.
$$
Finally, by passing to the limit as $\eps \to 0^+$, we deduce the validity of the following Hardy-type inequality
\begin{equation}\label{hardy}
    \left(\frac{a-1}{2}\frac{\lambda}{\Lambda}\right)^2\int_{B_r}|u|^a\frac{|\nabla u|^2}{u^2}w^2\,\mathrm{d}x\leq\int_{B_r}|u|^a|\nabla w|^2\,\mathrm{d}x,\quad\mbox{for every }r\in (0,1).
\end{equation}
In particular, since 
$$
\int_{B_r}(|u|^{a/2} w)^2 + |\nabla (|u|^{a/2}w)|^2\,\mathrm{d}x \leq  C \int_{B_r} |u|^{a}( w^2 + |\nabla w|^2) + |u|^a \frac{|\nabla u|^2}{u^2}w^2\,\mathrm{d}x,
$$
the inequality \eqref{hardy} implies that $|u|^{a/2}w\in H^1(B_1)$ and 
\be\label{e:im}
\int_{B_r}|\nabla (|u|^{a/2}w)|^2\,\mathrm{d}x \leq c
\int_{B_r}|u|^a|\nabla w |^2\,\mathrm{d}x 
\ee
for some constant $c>0$ depending only on $n, \lambda/\Lambda$ and $a$. Thus, the classical Sobolev inequality ensures the existence of a dimensional constant $c>0$ such that, for every $0<r\leq1$, we have
\begin{equation}\label{sobolevstandard}
\left(\int_{B_r}||u|^{a/2}w|^{2^*}\,\mathrm{d}x\right)^{2/2^*}\leq c\int_{B_r}|\nabla(|u|^{a/2}w)|^2\,\mathrm{d}x,\qquad \mathrm{with \ }2^*:=\frac{2n}{n-2}.
\end{equation}
The conclusion follows once we  
prove the existence of two constants $\gamma\in(2,2^*)$ and $\overline C>0$, which depend only on $\mathcal S_{N_0}$ and $a$, such that the following inequality holds:
\begin{equation}\label{claimsobolev}
\left(\int_{B_r}|u|^a|w|^\gamma\,\mathrm{d}x\right)^{1/\gamma}\leq\overline{C}\left(\int_{B_r}||u|^{a/2}w|^{2^*}\,\mathrm{d}x\right)^{1/2^*}\quad\mbox{for every }r \in \left(0,\frac78\right].
\end{equation}
Indeed, the weighted Sobolev inequality \eqref{sobolev.embed} follows by combining \eqref{claimsobolev}, \eqref{e:im}, \eqref{sobolevstandard}, and \eqref{hardy}, with the choice $2^*_{\mathcal S}(a)=\gamma$.

To establish \eqref{claimsobolev}, we start by showing the existence of $\gamma \in (2,2^*), \overline{c}>0$, depending only on $\mathcal{S}_{N_0}$ and $a$, so that 
\be\label{e:12}
    \int_{B_{7/8}}|u|^{a\frac{2^*}{2}\frac{2-\gamma}{2^*-\gamma}}\,\mathrm{d}x\leq \overline c.
\ee
The latter uniform integrability condition in ensured by Lemma \ref{l:a-N0} (i), and works by choosing $\gamma \in (2,2^*)$ such that 
\begin{equation}\label{fraccondition}
a\frac{2^*}{2}\frac{\gamma-2}{2^*-\gamma}<a_{\mathcal S}, \quad\mbox{namely}\quad \gamma < \frac{2n(2+a\overline N_0)}{2n-4+an \overline N_0}.
\end{equation}
By direct computation, one can check that for $a\geq 1$ the right hand side in \eqref{fraccondition} is strictly between $2$ and $2^*$, and so  
\be\label{e:tuogamma}
\gamma = \frac12 \left(\frac{2n(2+a^+\overline N_0)}{2n-4+a n\overline N_0}+2\right)
\ee
is an admissible choice for \eqref{fraccondition}. Finally, given $\gamma$ as in \eqref{e:tuogamma}, consider the conjugate exponents $p:=2^*/\gamma$ and $q:=2^*/(2^*-\gamma)$. Then, by applying H\"older inequality and \eqref{e:12}, we get
\begin{align*}
\int_{B_r}|u|^a |w|^\gamma \,\mathrm{d}x &= \int_{B_r}||u|^{a/2} w|^\gamma |u|^{a(1-\frac{\gamma}{2})} \,\mathrm{d}x\\
&\leq C\left(\int_{B_r}||u|^{a/2} w|^{2^*}\,\mathrm{d}x\right)^{\frac{\gamma}{2^*}}
\left(\int_{B_r}|u|^{a\frac{2^*}{2}\frac{2-\gamma}{2^*-\gamma}}\,\mathrm{d}x\right)^{\frac{2^*-\gamma}{2^*}}\\
&\leq \overline{C}^\gamma\left(\int_{B_r}||u|^{a/2} w|^{2^*}\,\mathrm{d}x\right)^{\frac{\gamma}{2^*}},
\end{align*}
as we claimed.
\end{proof}

\begin{Proposition}[Local boundedness of energy solutions]\label{prop.bound}
Let $u \in \mathcal{S}_{N_0}, a \in (-a_{\mathcal{S}},a_{\mathcal{S}})\cup (1,+\infty)$ and $w\in H^1(B_1,|u|^a)$ be an energy solution to \eqref{eqw} in $B_1$. Then, for every $p\geq 2$ there exists $C>0$ depending only on the class $\mathcal{S}_{N_0}, a$ and $p$, such that, for $0< \rho < r<7/8$ it holds
\begin{equation}\label{loc.bound}
\|w\|_{L^\infty(B_{\rho})}\leq C \left(\frac{1}{(r-\rho)^{\alpha}}\int_{B_r} |u|^a w^p\,\mathrm{d}x\right)^{1/p},\quad\mbox{with }\,\alpha:= 2\frac{ 2^*_{\mathcal{S}}(a)}{2^*_{\mathcal{S}}(a)-2} .
\end{equation}
In particular, if $a \in (-a_{\mathcal{S}},a_{\mathcal{S}})$, we have $\alpha = n+a^+ \overline N_0$.
\end{Proposition}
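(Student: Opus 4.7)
The plan is to adapt the classical De Giorgi--Moser iteration to the weighted setting, using the uniform weighted Sobolev inequality of Proposition~\ref{Sobolev} as the key ingredient and testing the weak formulation with a (truncated) power of $w$ multiplied by a Lipschitz radial cut-off. The full statement then follows from a geometric iteration over shrinking radii and exploding exponents, where the exponent $\alpha$ in \eqref{loc.bound} emerges from summing the resulting geometric series.

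\textbf{Step 1 (Caccioppoli on powers).} Fix $s\geq 0$, $\rho\leq r'<r\leq 7/8$, and a cut-off $\eta\in C^\infty_c(B_r)$ with $\eta\equiv 1$ on $B_{r'}$ and $|\nabla\eta|\leq 2/(r-r')$. Testing the weak formulation of \eqref{eqw} against $\phi=\eta^2 w|w|^{2s}$, which is admissible after the usual truncation $w_M=\mathrm{sgn}(w)\min(|w|,M)$ (letting $M\to\infty$ at the end by monotone convergence) and, in case $a\in(-a_{\mathcal{S}},1)$, restricting to each connected component of $\{u\neq 0\}$ as allowed by Definition~\ref{definition.energy.a}(ii), one obtains after using uniform ellipticity and absorbing a mixed gradient term via Young's inequality the Caccioppoli-type estimate
\begin{equation*}
\int_{B_r}|u|^a\bigl|\nabla(\eta|w|^{s+1})\bigr|^2\,\mathrm{d}x\leq C(s+1)^2\int_{B_r}|u|^a|w|^{2(s+1)}|\nabla\eta|^2\,\mathrm{d}x.
\end{equation*}

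\textbf{Step 2 (Reverse H\"older inequality).} Since $\eta|w|^{s+1}\in H^1(B_1,|u|^a)$, applying Proposition~\ref{Sobolev} to this function and combining with Step~1 yields, with $\beta:=2^*_{\mathcal{S}}(a)/2>1$ and $q:=2(s+1)\geq 2$,
\begin{equation*}
\|w\|_{L^{\beta q}(B_{r'},|u|^a)}^{q}\leq \frac{Cq^2}{(r-r')^2}\|w\|_{L^q(B_r,|u|^a)}^{q}.
\end{equation*}

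\textbf{Step 3 (Moser iteration).} Fix $p\geq 2$ and iterate with $q_k:=p\beta^k$ and $r_k:=\rho+(r-\rho)2^{-k}$, so that $r_k-r_{k+1}\simeq(r-\rho)2^{-k}$. Taking logarithms and summing, the convergent series $\sum_k\beta^{-k}=\beta/(\beta-1)$ and $\sum_k k\beta^{-k}<\infty$ produce
\begin{equation*}
\|w\|_{L^\infty(B_\rho)}\leq C\,(r-\rho)^{-\frac{2}{p}\cdot\frac{\beta}{\beta-1}}\,\|w\|_{L^p(B_r,|u|^a)},
\end{equation*}
and the exponent matches $\alpha/p$ with $\alpha=2\cdot 2^*_{\mathcal{S}}(a)/(2^*_{\mathcal{S}}(a)-2)$, giving \eqref{loc.bound}. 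Specialising to $a\in(-a_{\mathcal{S}},a_{\mathcal{S}})$, the explicit expression of $2^*_{\mathcal{S}}(a)$ in terms of the effective dimension $n+a^+\overline{N}_0$ reduces $\alpha$ to $n+a^+\overline{N}_0$.

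\textbf{Main obstacle.} The most delicate point is the rigorous justification of the power test function in Step~1. For $a\geq 1$ one can work with approximants in $C^\infty_c(\overline{B_1}\setminus Z(u))$ as in Proposition~\ref{p:alternative-H}(i), so the only care is the truncation in $M$. For $a\in(-a_{\mathcal{S}},1)$, however, $w$ need not be continuous across $Z(u)$ and the equation holds componentwise together with the conormal condition \eqref{e:equation-exponent-a-neumann}; one must then first localise inside $\{u\neq 0\}$, cut off a small neighborhood of $S(u)$ using the null-capacity property from Lemma~\ref{l:capacity}(ii) to make the test function admissible globally (so that the boundary contribution on $R(u)$ drops out by \eqref{e:equation-exponent-a-neumann}), and finally pass to the limit in the cut-off parameter. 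Once this admissibility is secured, the iteration argument is standard and the constants depend only on $\mathcal{S}_{N_0}$, $a$ and $p$ through the uniform Sobolev constant of Proposition~\ref{Sobolev}.
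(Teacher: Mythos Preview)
Your proposal is correct and follows essentially the same route as the paper: a Moser iteration based on testing with a power of $w$ times a cut-off, combined with the uniform weighted Sobolev inequality of Proposition~\ref{Sobolev}, and summing the resulting geometric series to obtain the exponent $\alpha$. If anything, you are more careful than the paper about the admissibility of the test function (truncation and the componentwise formulation for small $a$), which the paper's sketch simply takes for granted; note only that the relevant range in the second case is $a\in(-a_{\mathcal S},a_{\mathcal S})$ rather than $(-a_{\mathcal S},1)$, since the intermediate range $[a_{\mathcal S},1]$ is excluded from the statement.
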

\begin{proof}
We just sketch the ideas of the proof since it is based on the classical Moser iteration argument.\\
Let $\beta\geq 1, 0<\rho\leq r<7/8$ and $\eta \in C^\infty_c(B_r)$ be a non increasing radial cut-off function such that $\eta \in [0,1], \eta \equiv 1$ in $B_\rho$ and $|\nabla \eta|\leq 2(r-\rho)^{-1}$. Then, by testing \eqref{eqw} with $\eta^2 w^{\beta}$ and integrating by parts, we get the following Caccioppoli type inequality
$$
\int_{B_1} |u|^a|\nabla (\eta w^{\frac{1+\beta}{2}})|^2  \,\mathrm{d}x\leq C(\lambda,\Lambda)\left(\frac{1+\beta}{\beta}\right)^2\int_{B_1} |u|^a w^{1+\beta}|\nabla \eta|^2\,\mathrm{d}x.
$$
Then by the choice of $\eta$ and the Sobolev inequality \eqref{sobolev.embed}, we get
\begin{align*}
&\left(\int_{B_\rho} |u|^a w^{\frac{1+\beta}{2}2^*_{\mathcal{S}}(a)} \,\mathrm{d}x \right)^{\frac{2}{2^*_{\mathcal{S}}(a)}}  \leq \left(\int_{B_r} |u|^a \left(\eta w^{\frac{1+\beta}{2}}\right)^{2^*_{\mathcal{S}}(a)}\,\mathrm{d}x \right)^{\frac{2}{2^*_{\mathcal{S}}(a)}} \\ 
&\qquad\qquad\qquad\leq C(n,\lambda,\Lambda,N_0)\int_{B_r} |u|^a|\nabla (\eta w^{\frac{1+\beta}{2}})|^2 \,\mathrm{d}x\\
&\qquad\qquad\qquad\leq C(n,\lambda,\Lambda,\beta,N_0)
\frac{1}{(r-\rho)^2}\int_{B_r} |u|^a w^{1+\beta}\,\mathrm{d}x.
\end{align*}

 For the sake of readability, we omit the explicit dependence of the constant $C>0$, since it remains invariant through the rest of the proof. Thus, by setting $\nu= 2^*_{\mathcal{S}}(a)/2$ and $p=\beta+1$, we get
\begin{equation}\label{moser.ite}
\left(\int_{B_\rho} |u|^a w^{p\nu} \,\mathrm{d}x\right)^{\frac{1}{\nu}} \leq \frac{C}{(r-\rho)^2 }\int_{B_r} |u|^a w^{p}\,\mathrm{d}x,\quad\mbox{for every }p\geq 2.
\end{equation}
and, by applying repeatedly \eqref{moser.ite} to the sequence
$$
r_i := \rho + \frac{r-\rho}{2^i},\quad (r_i-r_{i+1})^2 = \frac{(r-\rho)^2}{4^{1+i}},\quad \sigma_{i}=p\nu^i
$$
we have
$$
\left(\int_{B_{r_{i+1}}} |u|^a w^{\sigma_{i+1}} \,\mathrm{d}x \right)^{\frac{1}{\nu^{i+1}}}\leq \left(\frac{C}{(r-\rho)^2}\right)^{\sum_{k=0}^i \nu^{-k}}\int_{B_r} |u|^a w^{p}\,\mathrm{d}x.
$$
Since
$$
\sum_{k=0}^\infty \frac{1}{\nu^k} = \frac{\nu}{\nu-1}=\frac{2^*_{\mathcal{S}}(a)}{2^*_{\mathcal{S}}(a)-2},
$$
if we set $\alpha$ as in \eqref{loc.bound}, we get that
$$
\left(\int_{B_{r_{i+1}}} |u|^a w^{\sigma_{i+1}} \,\mathrm{d}x\right)^{\frac{p}{\sigma_{i+1}}}\leq \frac{C}{(r-\rho)^{\alpha}}\int_{B_r} |u|^a w^{p}\,\mathrm{d}x,
$$
for every $p\geq 2$. Finally, by taking the limit as $i\to +\infty$, we infer that
$$
\|w\|_{L^\infty(B_{\rho},|u|^a)} \leq C(n,\lambda,\Lambda,N_0,p)\left(\frac{1}{(r-\rho)^{\alpha}}\int_{B_r} |u|^a w^{p}\,\mathrm{d}x\right)^{1/p},
$$
where the essential supremum is formulated in terms of the measure $d\mu = |u(x)|^a dx$. Then, \eqref{loc.bound} follows once we notice that since $Z(u)$ has null measure, then the Lebesgue measure is absolutely continuous with respect to $d\mu$.
\end{proof}

\begin{remark}\label{r:giorgiopoi}
We remark that for $a=2$, the Proposition \ref{prop.bound} implies an explicit bound of the ratio of two solutions sharing their zero sets, uniform-in-$\mathcal{S}_{N_0}$. More precisely, let $u\in \mathcal{S}_{N_0}$ and $v \in H^1(B_1)$ be solutions to \eqref{equv} such that $Z(u)\subseteq Z(v)$, then for $r \in (0,7/8)$
$$
\left\|\frac{v}{u}\right\|_{L^\infty(B_{r/2})}\leq C \left(\frac{1}{r^{\alpha}}\int_{B_r} v^2\,\mathrm{d}x\right)^{1/2},\quad\mbox{with }\,\alpha:= 2\frac{ 2^*_{\mathcal{S}}(2)}{2^*_{\mathcal{S}}(2)-2} 
$$
and $C>0$ depending only on $\mathcal{S}_{N_0}$. We refer to \cite[Theorem 4.2]{LinLin}, where the same result is deduced from a corkscrew property of nodal domains.
\end{remark}

\section{A priori uniform-in-\texorpdfstring{$\mathcal{S}_{N_0}$}{Lg} H\"{o}lder estimates for the ratio}\label{sec:holder}
In this section, we are going to prove Theorem \ref{uniformHolderZ}, that is, \emph{} uniform-in-$\mathcal S_{N_0}$ local $C^{0,\alpha}$ estimates in any dimension $n\geq2$ for ratios of solutions sharing nodal sets.

First, let us remark the following fact: \cite[Theorem 1.1]{LogMal1} proves real analitycity of the ratio $v/u$ in case the coefficients of the equation solved by $u$ and $v$ are real analytic, then in particular $v/u$ is $\alpha$-H\"older continuous for any $\alpha\in(0,1)$. Therefore, in case of real analytic coefficients, Theorem \ref{uniformHolderZ} immediately implies the following

\begin{Corollary}[Uniform-in-$\mathcal S_{N_0}$ H\"{o}lder bounds for the ratio]\label{corLogMal}
Let $n\geq2$, $A\in\mathcal A$ with real analytic entries, $\alpha\in(0,1)$, $u\in \mathcal{S}_{N_0}$ and $v$ be solutions to
$$
\mathrm{div}\left(A\nabla u\right) = \mathrm{div}\left(A\nabla v\right) = 0\quad\mbox{in }B_1,\quad\mbox{such that }Z(u)\subseteq Z(v).
$$
Then, there exists a positive constant $C$ depending only on $\mathcal{S}_{N_0}$ and $\alpha$ such that
\begin{equation*}
\left\|\frac{v}{u}\right\|_{C^{0,\alpha}(B_{1/2})}\leq  C\|v\|_{L^2(B_1)}.
\end{equation*}
\end{Corollary}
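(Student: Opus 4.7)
My plan is to derive the corollary as an immediate consequence of Theorem \ref{uniformHolderZ}, using \cite[Theorem 1.1]{LogMal1} solely to verify the qualitative a priori H\"older hypothesis appearing in that theorem.

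First, I would invoke \cite[Theorem 1.1]{LogMal1}: since $A \in \mathcal{A}$ has real analytic entries and $u,v$ solve the same divergence-form elliptic equation with $Z(u) \subseteq Z(v)$, the ratio $v/u$, initially defined on $B_1 \setminus Z(u)$, extends to a real analytic function on the whole of $B_1$. In particular $v/u \in C^{\infty}(B_1) \subset C^{0,\alpha}_{\loc}(B_1)$ for every $\alpha \in (0,1)$, so the qualitative regularity hypothesis of Theorem \ref{uniformHolderZ} is met for every admissible exponent.

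Second, for each fixed $\alpha \in (0,1)$ I would apply Theorem \ref{uniformHolderZ} directly to the pair $(u,v)$. This upgrades the mere qualitative regularity of $v/u$ to the quantitative bound
\begin{equation*}
\left\|\frac{v}{u}\right\|_{C^{0,\alpha}(B_{1/2})} \leq C\|v\|_{L^2(B_1)},
\end{equation*}
where $C>0$ depends only on $\mathcal{S}_{N_0}$ and on $\alpha$, and in particular is uniform across all $u \in \mathcal{S}_{N_0}$ and all coefficient matrices $A \in \mathcal{A}$ with real analytic entries.

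There is no substantive obstacle in the corollary itself: the real analyticity assumption is used only to certify that $v/u$ belongs to $C^{0,\alpha}_{\loc}(B_1)$ for every $\alpha \in (0,1)$ simultaneously, while the compactness-type uniformity with respect to the class $\mathcal{S}_{N_0}$ is already packaged inside Theorem \ref{uniformHolderZ}. It is worth emphasizing that under the weaker Lipschitz hypothesis alone, \cite{LinLin} produces only some implicit small exponent $\alpha^* \in (0,1)$ for which the a priori regularity of the ratio is known; the same scheme would then yield the uniform-in-$\mathcal{S}_{N_0}$ estimate for that specific $\alpha^*$, and it is precisely the analytic hypothesis on the coefficients that unlocks every $\alpha \in (0,1)$ at once in the present corollary.
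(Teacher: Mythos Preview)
Your proposal is correct and matches the paper's own argument essentially verbatim: the paper also invokes \cite[Theorem 1.1]{LogMal1} to ensure $v/u$ is real analytic (hence in $C^{0,\alpha}_{\loc}(B_1)$ for every $\alpha$), and then applies Theorem \ref{uniformHolderZ} to obtain the uniform estimate.
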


We would like to remark that the previous result extends to any dimension the $2$-dimensional uniform H\"older estimate implied by the uniform Harnack estimate in \cite[Theorem 1.1]{LogMal2}.

We would like to remark that, in view of Proposition \ref{p:inverse}, Theorem \ref{uniformHolderZ} is equivalent to the following result

\begin{Theorem}
Let $n\geq2, A\in\mathcal A, u \in \mathcal{S}_{N_0}$ and $w \in H^1(B_1,u^2)$ be a solutions to
$$
\mathrm{div}(u^2 A\nabla w) = 0\quad\mbox{in }B_1,
$$
in the sense of Definition \ref{definition.energy.a}. Then, if $w\in C^{0,\alpha}_{\loc}(B_1)$, there exists a positive constant $C$ depending only on $\mathcal{S}_{N_0}$ and $\alpha\in(0,1)$ such that
\begin{equation*}
\left\|w\right\|_{C^{0,\alpha}(B_{1/2})}\leq C\|w\|_{L^2(B_1,u^2)}.
\end{equation*}
\end{Theorem}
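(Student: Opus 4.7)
The plan is to follow the Simon-style blow-up and compactness scheme adapted to the degenerate PDE \eqref{eqw}, as developed in \cite{SirTerVit1, SirTerVit2, TerTorVit1}. The Hölder estimate will be obtained from a Campanato-type decay of the weighted $L^2$-oscillation: there exist $\theta\in (0,1/2)$ and $r_0>0$, depending only on $\mathcal{S}_{N_0}$ and $\alpha$, such that for every admissible triple $(A,u,w)$, every $x_0\in B_{1/2}$ and every $r\leq r_0$,
$$
\inf_{c\in\R}\int_{B_{\theta r}(x_0)} u^2(w-c)^2\,dx \leq \theta^{n+2\alpha}\inf_{c\in\R}\int_{B_r(x_0)} u^2(w-c)^2\,dx.
$$
Iterating this dyadically, and coupling the result with the lower volume bound $\int_{B_r(x_0)}u^2\gtrsim r^{n+2\overline{N}_0}$ from Lemma \ref{lemma.hay} together with the uniform $L^\infty$-control of Proposition \ref{prop.bound}, converts the decay into the quantitative Hölder bound with constant depending only on $\mathcal{S}_{N_0}$ and $\alpha$.

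I would prove the decay by contradiction. If it fails, extract sequences $u_k\in\mathcal{S}_{N_0}$, $A_k\in\mathcal{A}$, solutions $w_k$, centers $x_k\in B_{1/2}$ and radii $r_k>0$ witnessing the failure, and introduce the rescalings
$$
U_k(x):=\frac{u_k(x_k+r_k x)}{H(x_k,u_k,g_k,r_k)^{1/2}},\qquad W_k(x):=\frac{w_k(x_k+r_k x)-c_k}{\delta_k},
$$
where $c_k$ is the optimal constant in the $L^2(u_k^2)$-excess at scale $r_k$ and $\delta_k>0$ is chosen so that the rescaled excess at unit scale satisfies $\int_{B_1}U_k^2 W_k^2\,dx=1$. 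Proposition \ref{p:blow-up} supplies $U_k\to P$ in $C^{1,\alpha}_\loc(\R^n)\cap H^1_\loc(\R^n)$, with $P$ a harmonic polynomial of degree $\leq\overline{N}_0$ and $Z(U_k)\to Z(P)$ in Hausdorff distance. The Moser $L^\infty$-estimate of Proposition \ref{prop.bound} (with $a=2$), the Caccioppoli inequality, and the weighted Sobolev embedding of Proposition \ref{Sobolev}, all uniform-in-$\mathcal{S}_{N_0}$, yield uniform $L^\infty_\loc$ bounds for $W_k$ and weak convergence in the natural weighted energy space to some $W_\infty$, locally uniformly on $\R^n\setminus Z(P)$. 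A dyadic scaling analysis, exploiting the one-sided failure of the decay at scales below $r_k$ together with the a priori bound $\|w_k\|_{L^2(B_1,u_k^2)}\leq 1$, promotes this into the polynomial growth estimate $|W_\infty(x)|\leq C(1+|x|)^\alpha$.

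Passing to the limit in the weak formulation, using the Hausdorff convergence of $Z(U_k)$, the local uniform convergence $\widetilde{A}_k\to\mathbb{I}$, and the uniform integrability of $U_k^2$, the limit $W_\infty$ solves $\mathrm{div}(P^2\nabla W_\infty)=0$ in $\R^n$ in the sense of Definition \ref{definition.energy.a}. Proposition \ref{p:inverse} then shows $V_\infty:=P\,W_\infty$ is an entire harmonic function with $Z(P)\subseteq Z(V_\infty)$ and polynomial growth of order $\alpha+\deg P$. Theorem \ref{liouvilletheorem}, applied with exponent $\gamma=\alpha<1$, forces $W_\infty=V_\infty/P$ to be a polynomial of degree $\lfloor\alpha\rfloor=0$, hence a constant. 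Since the $c_k$ are optimal, the $L^2(P^2)$-mean of $W_\infty$ vanishes on $B_1$, which forces $W_\infty\equiv 0$ and contradicts the normalization $\int_{B_1}P^2 W_\infty^2=\lim_k\int_{B_1}U_k^2 W_k^2=1$.

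The main obstacle lies in the passage to the limit in the weak formulation across the moving degenerate nodal sets $Z(U_k)$, together with the inheritance by $W_\infty$ of the conormal condition \eqref{e:equation-exponent-a-neumann} on $R(P)$: this requires the capacitary control of Lemma \ref{l:capacity}, the density result of Proposition \ref{p:alternative-H}, and a careful choice of test functions vanishing in shrinking neighborhoods of $Z(U_k)$ with controlled weighted energy. A second delicate point is the conversion of the (iterated) failure of the Campanato decay into the pointwise growth estimate $|W_\infty(x)|\leq C(1+|x|)^\alpha$, which demands a dyadic scaling argument combined with the uniform Sobolev and Moser bounds of Propositions \ref{Sobolev} and \ref{prop.bound}, and a careful propagation of the excess across scales.
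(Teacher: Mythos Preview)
Your approach differs substantially from the paper's, and it contains a genuine gap.

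The paper does not argue via a Campanato decay of the weighted $L^2$-oscillation. Instead it runs the Simon-style contradiction argument directly on the $C^{0,\alpha}$ seminorm: assuming $\|w_k\|_{C^{0,\alpha}(B_{1/2})}\to\infty$ while $\|w_k\|_{L^\infty}\leq 1$, it localizes with a cutoff $\eta$, picks points $x_k,\zeta_k$ realizing the maximal H\"older quotient $L_k$ of $\eta w_k$, and blows up by dividing through by $L_k r_k^\alpha$ with $r_k=|x_k-\zeta_k|$. The key observation is that this normalization, together with the \emph{a priori} hypothesis $w_k\in C^{0,\alpha}_\loc$, makes the blow-up $\tilde W_k$ uniformly $\alpha$-H\"older on every compact set \emph{by construction}; the growth bound $|W(x)|\leq C(1+|x|)^\alpha$ for the limit then comes for free. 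Your scheme never exploits the qualitative hypothesis $w\in C^{0,\alpha}_\loc$, and is in effect attempting an \emph{a posteriori} estimate.

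The concrete failure is in the conversion step. Your Campanato iteration yields, for each $x_0\in B_{1/2}$ and small $r$,
\[
\inf_{c}\int_{B_r(x_0)} u^2(w-c)^2\,\mathrm{d}x \leq C\, r^{\,n+2\alpha}.
\]
To extract pointwise oscillation you rescale and apply the Moser bound of Proposition~\ref{prop.bound}; the correctly scaled version reads
\[
\|w-c\|_{L^\infty(B_{r/2}(x_0))}\leq C\left(\frac{\int_{B_r(x_0)}u^2(w-c)^2}{\int_{B_r(x_0)}u^2}\right)^{1/2}.
\]
But by Lemma~\ref{lemma.hay} one only has the \emph{lower} bound $\int_{B_r(x_0)}u^2\geq c\,r^{\,n+2\overline N_0}$, and near singular points of $u$ this is sharp. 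Hence the right-hand side above is controlled only by $C r^{\alpha-\overline N_0}$, which blows up as $r\to 0$ since $\alpha<1\leq \overline N_0$. In other words, the exponent $n+2\alpha$ in your decay does not match the weighted volume growth $n+2\overline N_0$, and no H\"older bound follows. Fixing the exponent to $n+2\overline N_0+2\alpha$ would cure this, but then the blow-up contradiction breaks: the limit weight $P^2$ may correspond to a polynomial of degree strictly less than $\overline N_0$, and the rescaled failure inequality no longer yields a nontrivial limit.

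A second, related issue is the growth estimate $|W_\infty(x)|\leq C(1+|x|)^\alpha$. You propose to obtain it from ``the one-sided failure of the decay at scales below $r_k$'', but growth of $W_\infty$ at infinity corresponds to the behavior of $w_k$ at scales \emph{larger} than $r_k$, where you have no a priori decay information; and the global bound $\|w_k\|_{L^2(B_1,u_k^2)}\leq 1$ is far too weak (again because of the weight degeneracy) to produce a uniform $|x|^\alpha$ growth after division by the excess $\delta_k$. In the paper's scheme this step is trivial precisely because the blow-up is normalized by the H\"older seminorm itself.
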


\subsection{Proof of Theorem \ref{uniformHolderZ}} 
The proof follows by combining the contradiction argument introduced in \cite{TerTorVit1} with the Liouville Theorem \ref{liouvilletheorem}.\\

Let us assume by contradiction, along a sequence $k\to+\infty$, the existence of $A_k\in \mathcal A$, $u_k\in \mathcal S_{N_0}$ and $v_k$ solutions to
$$
L_{A_k}u=L_{A_k}v=0,\qquad \mbox{with }Z(u_k)\subseteq Z(v_k)
$$
and $w_k:=v_k/u_k\in C^{0,\alpha}_{\loc}(B_1)$ for some $\alpha\in(0,1)$ and such that $\|w_k\|_{L^\infty(B_1)}=1$ and
$$\|w_k\|_{C^{0,\alpha}(B_{1/2})}\to +\infty.$$
Let us  consider a radially decreasing cut-off function $\eta\in C^\infty_c(B_1)$ with $0\leq\eta\leq1$, $\eta\equiv1$ in $B_{1/2}$ and such that $\mathrm{supp}\eta=B_{3/4}=:B$. Then, we can assume that for two sequences of points $x_k,\zeta_k\in B$, it holds
\begin{equation*}
\frac{|\eta w_k(x_k)-\eta w_k(\zeta_k)|}{|x_k-\zeta_k|^\alpha}=L_k\to+\infty.
\end{equation*}
Now we want to define two blow-up sequences. Set $r_k=|x_k-\zeta_k|\to0$ and consider
$$
\begin{aligned}
\tilde W_k(x)&=\frac{1}{L_kr_k^{\alpha}}\left((\eta w_k)(x_k+r_k A^{-1/2}_k(x_k)x)-(\eta w_k)(x_k)\right),\\ W_k(x)&=\frac{\eta(x_k)}{L_kr_k^{\alpha}}\left(w_k(x_k+r_k A^{-1/2}_k(x_k)x)-w_k(x_k)\right),
\end{aligned}
$$
for $x\in B_{\sqrt{\lambda}/4 r_k}$. Then $\tilde W_k$ are uniformly $\alpha$-H\"older continuous on compact sets of $\R^n$ and both $\tilde W_k,W_k$ converge uniformly on compact sets to an entire non constant profile $W$, which is globally $\alpha$-H\"older continuous in $\R^n$. On the other hand, by rescaling \eqref{eqw}, $W_k$ solves
\begin{equation*}
\mathrm{div}\left(U_k^2\tilde{A}_k\nabla W_k\right)=0\quad\mathrm{in} \ B_{\sqrt{\lambda}/4r_k},
\end{equation*}
in the sense of Definition \ref{definition.energy.a}, with
$$
\tilde{A}_k(x)=A^{-1/2}_k(x_k) A_k (x_k+ r_k A^{-1/2}_k(x_k)x) A^{-1/2}_k(x_k)\in \mathcal{A}
$$
and $U_k$ defined as in \eqref{e:blow-up}. By Proposition \ref{p:blow-up}, set $U$ to be the limit of the blow-up sequence $U_k$, where $Z(U_k)\to Z(U)$ locally with respect to the Hausdorff convergence. Moreover, since $U^2\in L^1_\loc(\R^n)$ and on every compact set of $\R^n$ the sequence $W_k$ is uniformly bounded, by passing to the limit a Caccioppoli inequality we deduce that $W$ has finite $H^1(U^2)$-norm on every compact set of $\R^n$ and so, in light of Proposition \ref{p:alternative-H}, $W_k \to W$ strongly in $H^1_\loc(\R^n,U^2)$. On the other hand, since $W_k$ and $\tilde{W}_k$ shadows each other, we deduce that $W_k\to W$ strongly in $C^{0,\alpha}_\loc(\R^n)$, where $W$ is a solution to
$$
\mathrm{div}\left(U^2\nabla W\right)=0\quad\mbox{in }\R^n.
$$
in the sense of Definition \ref{definition.energy.a}. Now, by Proposition \ref{lem.natural}, $U$ is an harmonic polynomial and, by Proposition \ref{p:inverse}, the function $V:=UW$ is entire harmonic with $Z(U)\subseteq Z(V)$. Finally,  the growth condition
$$|W(x)|\leq C\left(1+|x|\right)^\alpha\quad\mbox{in }\R^n,$$
together with the fact that $W$ is non constant, bring to a contradiction by applying the Liouville Theorem \ref{liouvilletheorem}.

\section{Liouville theorems}\label{sec:liouville}
This section is devoted to proving some Liouville theorems for entire harmonic functions and for ratios of entire harmonic functions sharing zero sets. The latter is equivalent to a Liouville theorem for entire solutions to \eqref{e:a-entire}. First, we recall a rigidity result for entire harmonic functions with bounded Almgren frequency at infinity.

\begin{Proposition}\label{lem.natural}
Let $u \in H^1_\loc(\R^n)$ be an entire harmonic function with bounded Almgren frequency at infinity; that is, there exists $N_\infty \in \R$ such that $\lim_{t \to +\infty}N(0,u,t):=N_\infty$. Then, $N_\infty\in\mathbb N$ and $u$ is an harmonic polynomial of degree $N_\infty$.
\end{Proposition}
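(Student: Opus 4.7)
The plan is to exploit the classical monotonicity of the Almgren frequency for harmonic functions (which corresponds to the case $A = \mathbb I$, where the constant $C$ in Proposition \ref{prop.monoton} can be taken equal to $0$). Since $r \mapsto N(0,u,r)$ is non-decreasing and converges to $N_\infty$ at infinity, we automatically have $N(0,u,r)\leq N_\infty$ for every $r > 0$.

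First, I would establish a polynomial growth estimate on $u$. A direct computation based on $\Delta u = 0$ gives the standard identity
\begin{equation*}
\frac{H'(r)}{H(r)}=\frac{n-1}{r}+\frac{2\,N(0,u,r)}{r},
\end{equation*}
and, using $N(0,u,r)\leq N_\infty$, integration from $1$ to $r$ yields $H(r)\leq H(1)\,r^{n-1+2N_\infty}$ for $r\geq 1$. A further radial integration gives $\int_{B_r}u^2\,\mathrm{d}x \leq C\,r^{n+2N_\infty}$. Combining this with the subharmonicity of $u^2$ and the mean-value inequality on balls $B_{|x_0|}(x_0)\subset B_{2|x_0|}$, one obtains the pointwise bound $|u(x)|\leq C(1+|x|)^{N_\infty}$ on $\R^n$.

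At this point I would invoke the classical Liouville theorem for entire harmonic functions with polynomial growth: $u$ must be a harmonic polynomial of degree at most $\lfloor N_\infty\rfloor$. It then remains to identify $N_\infty$ with $\deg u$. For this, I would decompose $u$ into homogeneous harmonic components $u=\sum_{k\leq K}P_k$ of degrees $k$, and compute both $H(r)$ and $D(r)$ via orthogonality of spherical harmonics on $\partial B_r$, obtaining
\begin{equation*}
H(r)=\sum_{k\leq K}\alpha_k\,r^{n-1+2k},\qquad D(r)=\sum_{k\leq K}k\,\alpha_k\,r^{n-2+2k},
\end{equation*}
with $\alpha_k=\int_{\mathbb S^{n-1}}P_k^2\,\mathrm{d}\sigma\geq 0$. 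Thus $N(0,u,r)$ is a weighted average of integers $k\leq K$ whose weights $\alpha_k\,r^{2k}$ force the average to tend to $K=\deg u$ as $r\to+\infty$, provided $\alpha_K>0$. Hence $N_\infty=\deg u \in \N$.

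The two steps are essentially standard; the only mild technical point is making sure the pointwise Liouville-type bound is derived correctly from the $L^2$-growth of $H(r)$, which is where the monotonicity of $N$ plays its crucial role. Once this is in place, the rigidity of the frequency on harmonic polynomials closes the argument with no further difficulty.
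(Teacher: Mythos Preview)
Your argument is correct and complete. The paper itself omits the proof of this proposition, merely noting that the result is classical and ``based on a straightforward blow-down analysis'' (with a pointer to \cite[Corollary 4.7]{LinLin}). Your approach is therefore not a comparison target in the strict sense, but it is worth contrasting the two strategies.

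The blow-down route alluded to in the paper proceeds by rescaling $u_r(x)=u(rx)/H(0,u,r)^{1/2}$, using the frequency bound to extract a $C^{1,\alpha}_\loc$--limit along $r\to+\infty$, and observing that the limit is entire harmonic with \emph{constant} frequency equal to $N_\infty$; rigidity in the Almgren formula then forces homogeneity, whence $N_\infty\in\N$. One still needs a separate step (typically the same growth bound you derive) to upgrade this information on the blow-down to the statement that $u$ itself is a polynomial.

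Your route is more elementary and self-contained: you bypass compactness and rigidity entirely by (i) integrating the logarithmic derivative of $H$ to get $L^2$-growth, (ii) converting this to a pointwise bound via subharmonicity of $u^2$, (iii) invoking the classical Liouville theorem, and (iv) reading off $N_\infty=\deg u$ from the explicit spherical-harmonic expansion of $N(0,u,r)$. Each step is standard, and the only place one must be slightly careful---the passage from $L^2$ to $L^\infty$ growth---you handle correctly with the mean-value inequality on $B_{|x_0|}(x_0)\subset B_{2|x_0|}$. The advantage of your approach is that it yields the full conclusion in one pass, without any limiting procedure; the blow-down approach, on the other hand, generalizes more readily to settings (variable coefficients, almost-minimizers) where explicit spherical-harmonic computations are unavailable.
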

 This result is commonly considered classic and is based on a straightforward blow-down analysis, as already stressed in the proof of \cite[Corollary 4.7]{LinLin}. Then, the proof is omitted.
\subsection{An algebraic formula for harmonic polynomials in two dimensions}
Let us recall the following two dimensional Liouville theorem by Nadirashvili \cite[Theorem 2]{Nad}, see also \cite[Corollary 1]{Lin2} for a similar result.
\begin{Proposition}[Nadirashvili's Liouville theorem]\label{Prop:Nad}
Let $u$ be entire harmonic in $\R^2$ with at most $k$ nodal regions. Then, $u$ is a harmonic polynomial of degree at most $k-1$.
\end{Proposition}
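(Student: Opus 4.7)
The plan is to split the argument in two steps: first, to show that $u$ must be a harmonic polynomial by producing a uniform-in-$R$ bound on its Almgren frequency at infinity; second, to bound the degree via an Euler characteristic computation on the compactified nodal graph.

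For the first step, for every $R>0$ I would set $u_R(x):=u(Rx)$; this is entire harmonic and, since scaling maps nodal regions bijectively, $u_R$ has the same $k$ global nodal regions in $\R^2$ as $u$. Provided one can upgrade this to a bound $k_0=k_0(k)$ on the number of nodal regions of $u_R$ inside $B_1$ (the delicate point, discussed below), the normalization $u_R/\|u_R\|_{L^2(B_{1/2})}$ lies in the class $\mathcal T_{k_0}$ of \eqref{classT.intro}. Proposition \ref{TinS} (applicable since here $A=\mathbb I$) then gives, for some fixed $\theta\in(0,1)$, a constant $N_0=N_0(k_0,\theta)$ with $N(0,u_R,\theta)\leq N_0$. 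By scale invariance of the Almgren frequency for harmonic functions, $N(0,u,\theta R)=N(0,u_R,\theta)\leq N_0$ for every $R>0$; by Almgren monotonicity, the limit $N_\infty:=\lim_{R\to+\infty}N(0,u,R)$ exists and satisfies $N_\infty\leq N_0$. Proposition \ref{lem.natural} then identifies $u$ as a harmonic polynomial of degree $d=N_\infty$.

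For the second step, I would interpret $Z(u)\cup\{\infty\}$ as a finite planar graph on the sphere $S^2=\R^2\cup\{\infty\}$. Since the leading homogeneous part $p_d$ of $u$ has exactly $2d$ radial nodal rays, the vertex $\infty$ has graph-degree $2d$. At each of the $s$ finite singular points $x_j\in S(u)\cap Z(u)$, the Taylor-type expansion \eqref{taylor} shows that $Z(u)$ has $2m_j$ emanating branches with $m_j:=\mathcal V(x_j,u)\geq 2$. Writing $M:=\sum_j m_j\geq 2s$, we obtain $V=s+1$ and $2E=2d+2M$, so $E=d+M$. Euler's formula on $S^2$ (noting that any bounded closed-loop components of $Z(u)$ only enlarge the face count) then yields
\begin{equation*}
k\ \geq\ F\ =\ E-V+2\ =\ d+(M-s)+1\ \geq\ d+1,
\end{equation*}
where the last inequality uses $M\geq 2s$. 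This gives $d\leq k-1$.

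The main obstacle is justifying the uniform bound on $C(u_R,B_1)$ required in Step 1: a single global nodal region of $u$ could in principle intersect a large ball in arbitrarily many components, and ruling this out requires the real-analytic structure of $Z(u)$. A conceivable bypass is to replace Proposition \ref{TinS} by a complex-analytic argument: writing $u=\Real f$ with $f$ entire, the $k$ nodal regions of $u$ are in bijection with the connected components of $\C\setminus f^{-1}(i\R)$, and a transcendental $f$ should force infinitely many such components via Picard-type considerations, so $f$ must be a polynomial. Once $u$ is known to be a harmonic polynomial, the Euler characteristic step is entirely routine and, as a byproduct, establishes the algebraic identity stated in Proposition \ref{propNad}.
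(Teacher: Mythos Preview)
The paper does not prove Proposition~\ref{Prop:Nad}; it is quoted as \cite[Theorem~2]{Nad} and used as a black box (in particular in the proof of Proposition~\ref{propNad}, where the implication $i)\Rightarrow ii)$ is deferred entirely to Nadirashvili). So there is no ``paper's own proof'' to compare against, and your attempt is really an attempt at an independent proof.

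Your Step~2 is essentially the Euler--characteristic computation the paper carries out in the proof of Proposition~\ref{propNad} (with the refinement there of adding ``regular vertices'' to break loops at $\infty$, which you would also need to handle arcs of $Z(u)$ running from $\infty$ to $\infty$ without meeting $S(u)$). Once $u$ is known to be a harmonic polynomial, this part is fine and gives $d\leq k-1$.

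The real problem is Step~1, and you have identified it yourself. Invoking Proposition~\ref{TinS} requires a bound on $C(u_R,B_1)$, the number of nodal components \emph{inside the ball}, not the number of global nodal regions; a single global nodal domain can indeed hit $B_R$ in many pieces, so the passage from ``$k$ global nodal regions'' to ``$u_R/\|u_R\|\in\mathcal T_{k_0}$'' is unjustified. There is also a circularity concern: Proposition~\ref{TinS} rests on Lemma~\ref{NadCompact}, which the paper attributes to \cite{Nad0,Nad}; that compactness statement and the Liouville theorem you are trying to prove come from the same source, and it is not clear from the paper that Nadirashvili's compactness is logically independent of his Liouville theorem. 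Your suggested complex-analytic bypass (write $u=\Real f$, argue that transcendental $f$ forces infinitely many components of $\C\setminus f^{-1}(i\R)$) is in fact much closer to how Nadirashvili actually proceeds, but it is only sketched here; as written, Step~1 has a genuine gap.
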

We show that Proposition \ref{lem.natural} and Proposition \ref{Prop:Nad} are equivalent in two dimensions. In fact, inspired by \cite{Nad}, by using the inverse of the stereographic projection into $\mathbb S^2$ and Euler's characteristic on $\mathbb S^2$ we prove the following characterization.
\begin{Proposition}\label{propNad}
Let $u$ be a non trivial entire harmonic function in $\R^2$. Then the following points are equivalent:
\begin{itemize}
\item[i)] there exists $k\in\mathbb N\setminus\{0\}$ such that $u$ has $k$ nodal regions;
\item[ii)] there exists $N_\infty=\lim_{t \to +\infty}N(0,u,t)\in \mathbb N$;
\end{itemize}
moreover the following algebraic formula holds true
\begin{equation}\label{formulaEul}
k=1+N_\infty+\sum_{z_i\in S(u)}(N(z_i,u,0^+)-1).
\end{equation}
\end{Proposition}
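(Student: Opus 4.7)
The plan is first to reduce both (i) and (ii) to the statement that $u$ is a harmonic polynomial, and then to obtain \eqref{formulaEul} by interpreting it as an Euler-characteristic computation on $\mathbb{S}^2$.

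For the equivalence $(i)\Leftrightarrow(ii)$, the implication $(i)\Rightarrow(ii)$ follows from Nadirashvili's Liouville theorem (Proposition \ref{Prop:Nad}): finitely many nodal regions force $u$ to be a harmonic polynomial of degree at most $k-1$, so $N_\infty$ exists in $\mathbb{N}$ and equals this degree. Conversely, Proposition \ref{lem.natural} yields that $(ii)$ implies $u$ is a harmonic polynomial of degree $N_\infty$; its nodal set is a real algebraic curve in $\mathbb{R}^2$, whose complement therefore has finitely many connected components, so (i) holds.

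Granted that $u$ is a non-constant harmonic polynomial of degree $N_\infty\geq 1$ (the constant case gives the trivial identity $k=1=1+0$), I identify $\mathbb{S}^2\simeq \mathbb{R}^2\cup\{\infty\}$ via inverse stereographic projection, with $N$ the pole corresponding to $\infty$, and consider the embedded graph $G=Z(u)\cup\{N\}\subset\mathbb{S}^2$. The Taylor expansion of $u$ at $z_i\in S(u)$, dictated by a homogeneous harmonic polynomial of degree $m_i:=N(z_i,u,0^+)\geq 2$, shows that exactly $2m_i$ nodal arcs meet at $z_i$; similarly, the leading homogeneous part of $u$ of degree $N_\infty$ controls the behavior at infinity, so precisely $2N_\infty$ arcs of $Z(u)$ accumulate at $N$. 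Declaring the vertices to be $S(u)\cup\{N\}$ and the edges to be the maximal open nodal arcs between them, the handshake lemma produces $V=|S(u)|+1$ and $E=\sum_{z_i\in S(u)}m_i+N_\infty$, while the faces of $G$ are precisely the $k$ nodal regions.

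To apply $V-E+F=2$ on $\mathbb{S}^2$, one must verify that $G$ defines a genuine CW-decomposition. The graph $G$ is connected: a compact component $C$ of $Z(u)$ in $\mathbb{R}^2$ (a connected graph with every vertex of even degree $\geq 2$) must contain a cycle enclosing a bounded region $D$, on whose boundary $u$ vanishes, forcing $u\equiv 0$ on $D$ by the maximum principle and hence $u\equiv 0$ by unique continuation. Each face $\Omega$ is a topological disk: a non-contractible loop in $\Omega$ would bound a disk $D\subset\mathbb{S}^2$ with $N\notin D$; on $\partial D\subset \Omega$ the function $u$ has constant nonzero sign, so by the maximum principle $u$ has the same sign throughout $D$, contradicting $D\cap Z(u)\neq\emptyset$. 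Hence Euler's formula gives
\[
k=F=2-V+E=2-(|S(u)|+1)+\sum_{z_i\in S(u)}m_i+N_\infty=1+N_\infty+\sum_{z_i\in S(u)}(m_i-1),
\]
which is \eqref{formulaEul}. The most delicate step will be rigorously justifying that exactly $2N_\infty$ branches of $Z(u)$ accumulate at $N$: this rests on a careful comparison of $u$ with its top-degree homogeneous part on large annuli via an implicit-function-type argument, standard in the theory of harmonic polynomials but worth spelling out to close the topological count.
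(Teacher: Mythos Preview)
Your proposal is correct and follows the same overall strategy as the paper: both arguments compactify $\mathbb{R}^2$ via inverse stereographic projection and compute the Euler characteristic of the cell complex on $\mathbb{S}^2$ determined by $Z(u)\cup\{\infty\}$.

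The executions differ in a minor but noteworthy way. The paper subdivides the self-loops at the north pole by inserting auxiliary ``regular vertices'' $RV(u)$, counts edges as $N_1=2N_\infty+\#C(u)$ (with $C(u)$ the arcs joining two singular points), and then eliminates the auxiliary data by combining two counting identities. You instead keep the raw graph with vertex set $S(u)\cup\{N\}$, tolerate self-loops and multi-edges (which is legitimate for a CW-decomposition), and read off $E=\sum m_i+N_\infty$ directly from the handshake lemma; this yields \eqref{formulaEul} in one line. Your route is slightly more economical, and you also spell out two points the paper leaves implicit: the connectedness of $G$ (via the no-compact-component argument) and the simple connectivity of each face (via the maximum principle on the disk avoiding the north pole). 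Both treatments rely on the same delicate ingredient you flag at the end---that exactly $2N_\infty$ nodal branches reach infinity---which the paper also takes as a consequence of the blow-down analysis without further detail.
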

\proof
If $k=1$, then by the classical Liouville Theorem $u$ is constant, and hence $N_\infty=0$. On the other hand, if $N_\infty=0$ then also in this case $u$ is constant and so $k=1$. Moreover $Z(u)=\emptyset$ so \eqref{formulaEul} trivially holds. Let us now prove the result in the nontrivial cases.

The fact that $i)$ implies $ii)$ follows by \cite[Theorem 2]{Nad}. In fact if $u$ is entire harmonic in $\R^2$ and satisfies $i)$ then it is a harmonic polynomial and $N_\infty$ exists finite and it coincides with the degree of $u$. In order to prove that $ii)$ implies $i)$ and formula \eqref{formulaEul} let us construct the following model. First, we perform the inverse of the stereographic projection from the plane into the $2$-sphere $\mathbb S^2$ by identifying the point at ${\infty}$ with the north pole. Then, the image of the nodal set $Z(u)$ in $\mathbb S^2$ defines a cell complex with a finite collection of $N_1$ smooth-arcs intersecting at $N_0$ endpoints and with $N_2$ connected components. Hence, the Euler characteristic for a cell complex gives
\begin{equation}\label{Eul}
N_2 = \chi(\mathbb S^2) - N_0 + N_1 = 2-N_0 + N_1,
\end{equation}
where $N_i$ denotes the number of $i$-dimensional cells of $\mathbb S^2$. Notice that $N_2$ coincides with the number of connected components of $\R^2\setminus Z(u)$; that is $N_2=k$.\\
First, in order to avoid the occurrence of loops, we add a vertex at every smooth arc connecting the north pole to itself. Indeed, by identifying the north pole with ${\infty}$, it is straightforward to deduce that those arcs coincide with the image of arcs of $Z(u)$ in which no singular points occur and with endpoints at infinity. Through this proof we will denote this collection of vertices as \emph{regular vertices} $RV(u)$ (see Figure \ref{figure1}). Thus, the collection of $N_0$ vertices consists in the image of singular points of $Z(u)$, the north pole and $RV(u)$ and so
$$
N_0 = 1+ \#S(u) + \#RV(u).
$$
In order to count the number of edges $N_1$ one has to remark that any branch of the nodal set $Z(u)$ either ends at ${\infty}$ or connects two singular points, otherwise by maximum principle the harmonic function $u$ would be identically zero on an open non-empty set, in contradiction with the unique continuation principle. Hence, by denoting with $C(u)$ the finite-set of branches connecting singular points, we get
\begin{equation*}
N_1= 2N_\infty + \# C(u).
\end{equation*}

\tikzset{every picture/.style={line width=0.75pt}}     
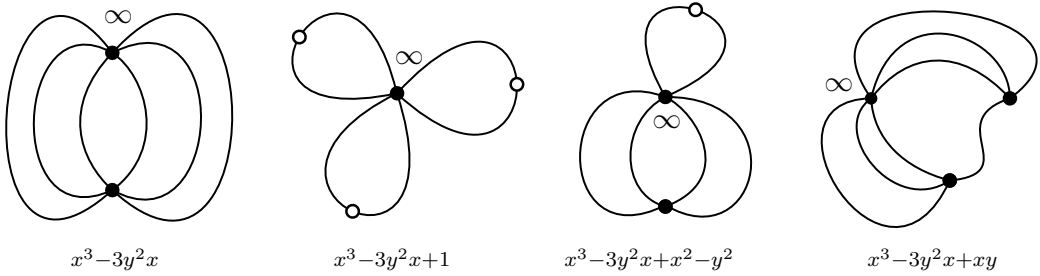
\begin{figure}
\begin{tikzpicture}[x=0.75pt,y=0.75pt,yscale=-0.7,xscale=0.7]

\draw    (106.17,155) .. controls (146.17,125) and (131.22,72.72) .. (106.17,56) ;
\draw [shift={(106.17,56)}, rotate = 213.72] [color={rgb, 255:red, 0; green, 0; blue, 0 }  ][fill={rgb, 255:red, 0; green, 0; blue, 0 }  ][line width=0.75]      (0, 0) circle [x radius= 4.5, y radius= 4.5]   ;
\draw [shift={(106.17,155)}, rotate = 323.13] [color={rgb, 255:red, 0; green, 0; blue, 0 }  ][fill={rgb, 255:red, 0; green, 0; blue, 0 }  ][line width=0.75]      (0, 0) circle [x radius= 4.5, y radius= 4.5]   ;
\draw    (106.17,155) .. controls (82.33,133.17) and (69,94.94) .. (106.17,56) ;
\draw    (106.17,56) .. controls (18,-62.5) and (-8.67,259.5) .. (106.17,155) ;
\draw    (106.17,155) .. controls (187.33,199.5) and (195.33,10.5) .. (106.17,56) ;
\draw    (106.17,155) .. controls (224,262.83) and (218.67,-67.17) .. (106.17,56) ;
\draw    (106.17,155) .. controls (26.33,191.5) and (35.33,16.17) .. (106.17,56) ;
\draw    (652.89,88.83) .. controls (670.44,12.5) and (752.67,42.28) .. (752.89,88.83) ;
\draw [shift={(752.89,88.83)}, rotate = 89.73] [color={rgb, 255:red, 0; green, 0; blue, 0 }  ][fill={rgb, 255:red, 0; green, 0; blue, 0 }  ][line width=0.75]      (0, 0) circle [x radius= 4.5, y radius= 4.5]   ;
\fill [black, shift={(652.89,88.83)}, rotate = 282.95] (0, 0) circle [x radius= 4.5, y radius= 4.5]   ;
\draw    (709.56,148.06) .. controls (764.33,142.83) and (702.33,105.5) .. (752.89,88.83) ;
\draw [shift={(709.56,148.06)}, rotate = 354.55] [color={rgb, 255:red, 0; green, 0; blue, 0 }  ][fill={rgb, 255:red, 0; green, 0; blue, 0 }  ][line width=0.75]      (0, 0) circle [x radius= 4.5, y radius= 4.5]   ;
\draw    (709.56,148.06) .. controls (675.78,139.17) and (654.44,117.83) .. (652.89,88.83) ;
\draw [shift={(709.56,148.06)}, rotate = 194.74] [color={rgb, 255:red, 0; green, 0; blue, 0 }  ][fill={rgb, 255:red, 0; green, 0; blue, 0 }  ][line width=0.75]      (0, 0) circle [x radius= 4.5, y radius= 4.5]   ;
\draw    (652.89,88.83) .. controls (620.67,112.06) and (669.56,174.72) .. (709.56,148.06) ;
\draw    (652.89,88.83) .. controls (564.22,90.28) and (663.33,248.94) .. (709.56,148.06) ;
\draw    (652.89,88.83) .. controls (678.89,54.72) and (717.11,50.28) .. (752.89,88.83) ;
\draw    (652.89,88.83) .. controls (583.78,-10.61) and (842.89,22.72) .. (752.89,88.83) ;
\draw    (310.83,84) .. controls (277,-58.17) and (160.33,117.17) .. (311.33,85.17) ;
\draw [shift={(311.33,85.17)}, rotate = 348.03] [color={rgb, 255:red, 0; green, 0; blue, 0 }  ][fill={rgb, 255:red, 0; green, 0; blue, 0 }  ][line width=0.75]      (0, 0) circle [x radius= 4.5, y radius= 4.5]   ;
\draw [shift={(240.93,44.61)}, rotate = 116.86, draw=black, fill=white, line width=0.95]      (0, 0) circle [x radius= 4.5, y radius= 4.5]   ;
\draw    (311.33,85.5) .. controls (429.17,193.33) and (423.83,-38) .. (311.33,85.17) ;
\draw [shift={(397.68,78.72)}, rotate=263.51, draw=black, fill=white, line width=0.95] (0,0) circle [x radius=4.5, y radius=4.5];
\draw    (311.33,85.17) .. controls (173,153.83) and (362.33,242.5) .. (311.33,85.5) ;
\draw [shift={(279.3,170.28)}, rotate = 18.48, draw=black, fill=white, line width=0.95]      (0, 0) circle [x radius= 4.5, y radius= 4.5]   ;
\draw    (504.67,166.83) .. controls (555.78,147.39) and (528.22,99.39) .. (504.67,87.83) ;
\draw [shift={(504.67,166.83)}, rotate = 339.17] [color={rgb, 255:red, 0; green, 0; blue, 0 }  ][fill={rgb, 255:red, 0; green, 0; blue, 0 }  ][line width=0.75]      (0, 0) circle [x radius= 4.5, y radius= 4.5]   ;
\draw    (504.67,166.83) .. controls (470.89,157.94) and (471.33,105.06) .. (504.67,87.83) ;
\draw [shift={(504.67,166.83)}, rotate = 194.74] [color={rgb, 255:red, 0; green, 0; blue, 0 }  ][fill={rgb, 255:red, 0; green, 0; blue, 0 }  ][line width=0.75]      (0, 0) circle [x radius= 4.5, y radius= 4.5]   ;
\draw    (504.67,87.83) .. controls (600.22,87.28) and (572.67,205.94) .. (504.67,165.83) ;
\draw    (504.67,87.83) .. controls (422.89,57.5) and (422.44,207.28) .. (504.67,165.83) ;
\draw    (504.67,87.83) .. controls (438.89,-26.28) and (628.22,34.39) .. (505.33,87.28) ;
\draw [shift={(526.42,24.93)}, rotate = 21.64, draw=black, fill=white, line width=0.95]      (0, 0) circle [x radius= 4.5, y radius= 4.5]   ;
\draw [shift={(504.67,87.83)}, rotate = 240.04] [color={rgb, 255:red, 0; green, 0; blue, 0 }  ][fill={rgb, 255:red, 0; green, 0; blue, 0 }  ][line width=0.75]      (0, 0) circle [x radius= 4.5, y radius= 4.5]   ;

\draw (98,23.57) node [anchor=north west][inner sep=0.75pt]    {$\infty $};
\draw (108,205.01) node [inner sep=0.75pt]  {${\scriptstyle x^3-3y^2 x}$};
\draw (617.11,72.46) node [anchor=north west][inner sep=0.75pt]    {$\infty $};
\draw (697.11,205.01) node [inner sep=0.75pt]  {${\scriptstyle x^3-3y^2 x+xy}$};
\draw (307.67,53.23) node [anchor=north west][inner sep=0.75pt]    {$\infty $};
\draw (307.67,205.01) node [inner sep=0.75pt]  {${\scriptstyle x^3-3y^2 x+ 1}$};
\draw (493.11,99.01) node [anchor=north west][inner sep=0.75pt]    {$\infty $};
\draw (493.11,205.01) node [inner sep=0.75pt]  {$ {\scriptstyle x^3-3y^2 x+x^2-y^2}$};
\end{tikzpicture}
\vspace{-1.2cm}\\
\caption{In this figure, we illustrate different scenarios related to harmonic polynomials of degree $N_\infty = 3$. Specifically, the black dots represent the singular points $S(u)$, the white dots correspond to $RV(u)$, and the point $\infty$ denotes the north pole on $\mathbb{S}^2$.}
\label{figure1}
       \end{figure}

Thus, by collecting the previous equalities, equation \eqref{Eul} translates into
\begin{equation}\label{bound}
k = 1 - \#S(u) - \#RV(u) + 2N_\infty + \# C(u).
\end{equation}
Moreover, by combining the maximum principle with the unique continuation principle, we deduce that two possibilities arise: either $\#S(u)=0$ and so $\#RV(u)\geq 1, \#C(u)=0$; or $\#S(u)\geq 1$ and the number of connections is strictly smaller than the number of singular points; that is, $\#C(u) \leq \#S(u)-1$. Therefore, in both cases, we get
\begin{equation}\label{1}
k \leq 2N_\infty
\end{equation}
which ensures that $ii)$ implies $i)$.
Let us conclude by giving an alternative expression of $N_1$.
By the classical blow-up analysis for harmonic functions, at every point $z_i \in S(u)$ there exists $2N(z_i,u,0^+)$ smooth arcs intersecting at $z_i$ and connecting $z_i$ either to the north pole or to another singular point. On the other hand, by the blow-down analysis of Proposition \ref{lem.natural}, we know that there exist $2N_\infty$ arcs intersecting at the north pole. Thus, since every edge in $C(u)$ connect two singular points, we get
\begin{equation}\label{2}
2N_\infty = 2\#RV(u) + 2\sum_{z_i \in S(u)}N(z_i,u,0^+) - 2\#C(u)
\end{equation}
where we have used that every branch with endpoint at the north pole either connect with a regular vertex or with a singular point. Finally, by combining \eqref{bound} with \eqref{2}, we deduce the validity of equation \eqref{formulaEul}.
\endproof

\begin{remark}
Fixed $N_\infty\in\mathbb N\setminus\{0\}$, from \eqref{formulaEul}, the fact that $\sum_{z_i\in S(u)}(N(z_i,u,0^+)-1)\geq0$ and \eqref{1} it follows
$$
\frac{k}{2}\leq N_\infty\leq k-1
$$
and the two equalities holds at the same time if and only if $u$ is a linear function. Moreover, the two bounds are sharp for any choice of $N_\infty\in\mathbb N\setminus\{0\}$. In fact, any $N_\infty$-homogeneous harmonic polynomial has exactly $k=2N_\infty$ nodal regions. Then, considering $u$ a harmonic polynomial of degree $N_\infty$, by Sard's theorem there exists $c\in\R$ such that $u+c$, which is still a harmonic polynomial of degree $N_\infty$, is such that $S(u+c)=\emptyset$ and hence \eqref{formulaEul} says that the number of its nodal domains is $k=N_{\infty}+1$.
\end{remark}

\begin{remark}
Notice that if either $i)$ or $ii)$ holds true, then \eqref{formulaEul} implies
$$
0\leq\#S(u)\leq k-1-N_\infty.
$$
\end{remark}
\subsection{A Liouville theorem for the ratio of entire harmonic functions sharing zero sets}
The aim of this last section is to prove Theorem \ref{liouvilletheorem}. We will show that actually the latter result follows by the division lemma in \cite{Mur} which can be stated in the following version
\begin{Lemma}[Murdoch's division lemma]\label{gen.division.lemma}
Let $Q$ be a harmonic polynomial and $P$ be a polynomial such that $Z(Q)\subseteq Z(P)$. Then, there exists a polynomial $R$ such that $P=QR$.
\end{Lemma}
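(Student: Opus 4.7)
The plan is to establish Murdoch's algebraic statement by exploiting the fact that harmonicity forces the real zero set of $Q$ to be ``large enough'' to serve as a uniqueness set for polynomials in $n\geq 2$ variables. The strategy is to pass from the real hypothesis $Z(Q)\subseteq Z(P)$ to its complex counterpart $Z_{\C}(Q)\subseteq Z_{\C}(P)$ and then invoke Hilbert's Nullstellensatz over $\C$.

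First, I would factor $Q$ in $\C[x_1,\ldots,x_n]$ as $Q=c\prod_{j=1}^{k}Q_j^{m_j}$ with the $Q_j$ pairwise distinct irreducibles, and verify that harmonicity rules out repeated factors, i.e.\ $m_j=1$ for every $j$. Indeed, if $Q=R^2 S$ with $R$ non-constant, expanding $\Delta Q=0$ yields
\[
\Delta Q = 2|\nabla R|^2 S + 2R\bigl(S\Delta R + 2\nabla R\cdot\nabla S\bigr) + R^2 \Delta S,
\]
so restricting to $Z(R)$ gives $|\nabla R|^2 S\equiv 0$ there. On the $(n-1)$-dimensional regular part of $Z(R)$ this forces $S\equiv 0$, and iterating the argument produces a contradiction with the assumed factorization. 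Hence $Q=c\prod_{j=1}^{k}Q_j$ over $\C$, and it suffices to prove $Q_j\mid P$ for each $j$.

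Second, and this is the crux, I would show that the real zero set $Z(Q)$ is Zariski dense in each irreducible complex component $Z_{\C}(Q_j)$. Harmonicity is what enables this: the regular part $R(Q)\subseteq Z(Q)$ is a real-analytic hypersurface of dimension $n-1$, and by unique continuation any polynomial in $\C[x_1,\ldots,x_n]$ vanishing on an $(n-1)$-dimensional real-analytic slab contained in $Z_{\C}(Q_j)$ must vanish identically on $Z_{\C}(Q_j)$. Thus the hypothesis $Z(Q)\subseteq Z(P)$ upgrades to $Z_{\C}(Q_j)\subseteq Z_{\C}(P)$ for every $j$. Hilbert's complex Nullstellensatz applied to the prime ideal $(Q_j)\subset\C[x_1,\ldots,x_n]$ then yields $Q_j\mid P$ in $\C[x]$, so multiplying gives $Q\mid P$ in $\C[x]$, and since $Q,P\in\R[x]$ the quotient $R=P/Q$ lies in $\R[x_1,\ldots,x_n]$.

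The main obstacle is the density statement of the second step. The condition $Z(Q)\subseteq Z(P)$ is purely real, and without harmonicity it admits easy counterexamples such as $Q=x^2+y^2,\ P=x$, where $Z(Q)=\{0\}\subseteq Z(P)$ but $Q\nmid P$. Harmonicity is precisely what rules out such ``thin'' real zero sets in dimension $n\geq 2$, converting a real algebraic hypothesis into an effectively complex one and opening the door to the standard Nullstellensatz-based division argument.
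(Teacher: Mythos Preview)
The paper does not supply its own proof of this lemma; it is quoted from Murdoch, with the remark that it follows by combining Theorem~2 and Lemma~4 of \cite{Mur}. So there is no in-house argument to compare against, only your attempt to assess on its own terms. Your overall architecture---upgrade the real inclusion $Z(Q)\subseteq Z(P)$ to its complex analogue and then invoke the Nullstellensatz---is the natural one and is indeed in the spirit of Murdoch's approach.

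There is, however, a genuine gap exactly where you flag ``the main obstacle''. In Step~2 you need $R(Q)$ to be Zariski dense in \emph{every} component $Z_{\C}(Q_j)$. Your argument shows only that an $(n-1)$-dimensional real-analytic slab \emph{contained in} $Z_{\C}(Q_j)$ is Zariski dense there; it does not show that $R(Q)$ actually provides such a slab for each $j$. Concretely, you are assuming without proof that every irreducible complex factor of a real harmonic polynomial has real zero locus of dimension $n-1$. This does not follow from the fact that $R(Q)$ is $(n-1)$-dimensional \emph{somewhere}: a priori one component $Z_{\C}(Q_{j_0})$ could meet $\R^n$ only in dimension $\leq n-2$ while the remaining factors absorb all of $R(Q)$. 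Ruling this out is precisely where harmonicity must do real work, and it is the substantive content behind Murdoch's result---not a bookkeeping step one can skip.

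The same hole undermines Step~1. Expanding $\Delta(R^2S)$ gives $(\nabla R\cdot\nabla R)\,S=0$ on $\{R=0\}$, with the \emph{bilinear} form $\nabla R\cdot\nabla R=\sum_i(\partial_i R)^2$, not the Hermitian $|\nabla R|^2$. Over $\C$ this form can vanish identically even where $\nabla R\neq 0$ (take $R=x_1+ix_2$), so the restriction yields nothing. If instead you group conjugate pairs and take $R$ real, then $R$ may be semi-definite (e.g.\ $R=x_1^2+x_2^2$ in $\R^n$, $n\geq 3$), its real regular locus is empty, and again the restriction is vacuous. So squarefreeness of $Q$ over $\C$, while true, is not established by your computation; it too hinges on the unproved structural fact that factors of harmonic polynomials cannot have thin real zero sets.
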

Actually the division lemma in \cite{Mur} is stated in a more general setting, and the result above follows by combining Theorem 2 and Lemma 4 in \cite{Mur}. Nevertheless,  one can relax the hypotheses on $Q$ by requiring that only its higher order term is harmonic.
\begin{proof}[Proof of Theorem \ref{liouvilletheorem}]
  Let $\mathrm{deg}(u)=N_\infty$. By \eqref{growth.eq} we get
  $$
  |v(x)|\leq C|u(x)|(1+|x|)^{\gamma}\leq C(1+|x|)^{\gamma + N_\infty}\quad\mbox{in }\R^n.
  $$
  Then $v$ is an harmonic polynomial and $\mathrm{deg}(v)\leq N_\infty\! +\! \lfloor \gamma \rfloor\!$, thus the result follows by Lemma \ref{gen.division.lemma}.
\end{proof}

\end{document}